\patchcmd{\thebibliography}{\chapter*}{\section*}{}{}
\numberwithin{equation}{section} %\usepackage{geometry}
\newtheorem{definition}{Definition}[section]
\newtheorem{theorem}[definition]{Theorem}
\newtheorem{corollary}[definition]{Corollary}
\newtheorem{lemma}[definition]{Lemma}
\declaretheoremstyle[
spaceabove=3pt, spacebelow=3pt,
headfont=\normalfont\boldshape,
notefont=\normalfont, notebraces={(}{)},
bodyfont=\normalfont,
%postheadspace=\newline,
qed=\qedsymbol
]{mystyle}
\declaretheorem[style=mystyle]{example}[definition]
\newtheorem{example}[definition]{Example}
\newtheorem{remark}[definition]{Remark}
\newcommand{\Qbar}{ \overline{\mathbb{Q}}}
\newcommand{\kbar}{ \overline{K}}
\newcommand{\Q}{\mathbb{Q}}
\newcommand{\bP}{\mathbb{P}}
\newcommand{\bC}{\mathbb{C}}
\newcommand{\bQ}{\mathbb{Q}}
\newcommand{\cO}{\mathcal{O}}
\newcommand{\QQ}{\mathbb{Q}}
\newcommand{\ZZ}{\mathbb{Z}}
\DeclareMathOperator{\vol}{vol}
\DeclareMathOperator{\length}{length}
\DeclareMathOperator{\Supp}{Supp}
\DeclareMathOperator{\dv}{div}
\DeclareMathOperator{\codim}{codim}
\newcommand{\cI}{\mathcal{I}}
\newcommand{\CO}{\mathcal{O}}
\newcommand{\Addresses}{{% additional braces for segregating \footnotesize
  \bigskip
  \footnotesize
  \noindent Keping Huang, \textsc{Institute for Advanced Studies in Mathematics, Harbin Institute of Technology, Haribin, China 150001}\\
    \noindent \textit{E-mail address}: \texttt{kphuang@hit.edu.cn}
\bigbreak
  \noindent Aaron Levin, \textsc{Department of Mathematics, Michigan State University, East Lansing, USA 48824}\\
    \noindent \textit{E-mail address}: \texttt{levina@msu.edu}
\bigbreak
  \noindent Zheng Xiao, \textsc{Beijing International Center for Mathematical Research, Peking University,
    Beijing, China 100871}\\
    \noindent \textit{E-mail address}: \texttt{xiaozheng@bicmr.pku.edu.cn}

}}
\title{A New Diophantine Approximation Inequality on Surfaces and Its Applications}
\author
{
KEPING HUANG, AARON LEVIN, AND ZHENG XIAO
%\thanks{keping.huang@rochester.edu}
}
\date{\vspace{-4em}}
\begin{document}
\maketitle

\def\Z{{\bf Z}}
%{Split Case, over $ \mathbb{C}$}  \hfill Authors
%\vspace{1em}

%%PLEASE PUT YOUR ABSTRACT HERE

%\section{Introduction}
\vspace{2em}

\begin{abstract} 
\noindent 
We prove a Diophantine approximation inequality for closed subschemes on surfaces which can be viewed as a joint generalization of recent inequalities of Ru-Vojta and Heier-Levin in this context. As applications, we study various Diophantine problems on affine surfaces given as the complement of three numerically parallel ample projective curves: inequalities involving greatest common divisors, degeneracy of integral points, and related Diophantine equations including families of $S$-unit equations.  We state analogous results in the complex analytic setting, where our main result is an inequality of Second Main Theorem type for surfaces, with applications to the study and value distribution theory of holomorphic curves in surfaces.
\end{abstract}

\section{Introduction}

Schmidt's Subspace Theorem occupies a central place in the theory of Diophantine approximation, and it is known to have deep and far-reaching applications. Stated in the language of heights (and with improvements due to Schlickewei \cite{Schl77} allowing for arbitrary finite sets of places), the Subspace Theorem may be stated as follows:

\begin{theorem}[Schmidt Subspace Theorem]
\label{subspace}
Let $S$ be a finite set of places of a number field $K$.  For each $v\in S$, let $H_{0,v},\ldots,H_{n,v}\subset \mathbb{P}^n$ be hyperplanes over $K$ in general position.  Let $\varepsilon>0$.  Then there exists a finite union of hyperplanes $Z\subset \mathbb{P}^n$ such that for all points $P\in \mathbb{P}^n(K)\setminus Z$,
\begin{equation*}
\sum_{v\in S}\sum_{i=0}^n \lambda_{H_{i,v},v}(P)< (n+1+\varepsilon)h(P).
\end{equation*}
\end{theorem}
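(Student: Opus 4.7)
The plan is to follow the classical Schmidt--Schlickewei strategy, a higher-dimensional, adelic elaboration of the Thue--Siegel--Roth method. First I would carry out standard reductions: using the product formula, appropriate normalizations of Weil functions, and the invariance of heights under linear change of coordinates, one may assume $K=\mathbb{Q}$, that the hyperplanes are given by linear forms with algebraic coefficients, and (after pigeonholing) that for each $v\in S$ the same $n+1$ linear forms realize the dominant contribution to $\sum_i\lambda_{H_{i,v},v}$ along an infinite subsequence of any putative counterexample. The argument then proceeds by induction on $n$, with base case $n=1$ being Roth's theorem in Schlickewei's adelic form.

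For the inductive step, assume by contradiction an infinite Zariski-dense sequence $P_1,P_2,\ldots\in\mathbb{P}^n(K)$ for which the reverse inequality holds on the complement of every finite union of hyperplanes. Apply Minkowski's second theorem to the adelic lattice built from the $L_{i,v}$, weighted by the approximation data, to attach to each $P_k$ a basis of $K^{n+1}$ adapted to the successive minima; pass to a subsequence where the combinatorial type of this basis (the ``Schmidt class'') is constant. Then, for a sufficiently rapidly growing subsequence $k_1<\cdots<k_m$ and a carefully balanced multidegree $(d_1,\ldots,d_m)$, Siegel's lemma on $(\mathbb{P}^n)^m$ yields a nonzero multihomogeneous auxiliary polynomial $F(\mathbf{x}^{(1)},\ldots,\mathbf{x}^{(m)})$ of controlled height. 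A local expansion of $F$ in the adapted basis at each $v\in S$, combined with the approximation hypothesis, forces $|F(P_{k_1},\ldots,P_{k_m})|_v$ to be extraordinarily small, and hence, via the product formula, forces the weighted index of $F$ at $(P_{k_1},\ldots,P_{k_m})$ to be large.

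The main obstacle, and the genuine heart of the Subspace Theorem, is the complementary \emph{upper} bound on that index: a non-vanishing statement showing that the weighted index of $F$ at such a product point must in fact tend to $0$ as $m\to\infty$. Classically this is Schmidt's iterated ``Roth machine''; conceptually it is a form of Faltings's Product Theorem, and it ultimately invokes the inductive hypothesis on subvarieties cut out by leading coefficients of $F$. Comparing the two bounds yields a contradiction once $m$ is chosen large compared with $1/\varepsilon$. To upgrade this contradiction into the stated conclusion that the exceptional set is a \emph{finite union of hyperplanes} rather than merely a sparse set, one extracts, from the constant combinatorial data along the subsequence, an explicit rational hyperplane containing it; iterating this extraction over the finitely many possible Schmidt classes produces the finite union $Z$ and completes the proof.
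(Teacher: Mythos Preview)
The paper does not prove this statement at all: Theorem~\ref{subspace} is the classical Schmidt Subspace Theorem, recorded here purely as background and attributed to Schmidt with Schlickewei's $S$-adic extension \cite{Schl77}. There is therefore nothing in the paper to compare your proposal against.

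That said, your outline is a recognizable sketch of the classical Thue--Siegel--Roth--Schmidt machinery (reduction, gap principle/pigeonholing, auxiliary polynomial via Siegel's lemma, a large-index lower bound from the approximation hypothesis and product formula, and a non-vanishing upper bound via Roth's lemma or the Product Theorem). It is at best a high-level roadmap rather than a proof: several steps are stated at a level of generality that hides real work (e.g., the reduction to $K=\mathbb{Q}$ is not how the $S$-adic statement is typically handled; the extraction of a \emph{rational} hyperplane from the ``constant combinatorial data'' and the iteration yielding only \emph{finitely many} such hyperplanes are exactly the delicate points of Schmidt's argument and need more than a sentence). For the purposes of this paper, however, the correct move is simply to cite the theorem and move on, which is what the authors do.
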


Here, $\lambda_{H_{i,v},v}$ is a local height function (also known as a local Weil function) associated to the hyperplane $H_{i,v}$ and place $v$ in $S$, and $h$ is the standard (logarithmic) height on $\mathbb{P}^n$.

%Before discussing subsequent developments and generalizations of Theorem \ref{subspace}, we fix some notation:
%\begin{itemize}
%\item $K$ a number field
%\item $S$ a finite set of places of $K$
%\item $X$ a projective variety of dimension $n$ defined over $K$
%\item $\lambda_{Y,v}$ a local height associated to $Y$ and $v\in S$, where $Y$ is a closed subscheme or a Cartier divisor of $X$
%\item $A$ an ample Cartier divisor on $X$
%\item $h_A$ a (global) height associated to $A$
%\end{itemize}

The Subspace Theorem has been generalized to the setting of hypersurfaces in projective space by Corvaja and Zannier \cite{CZ04'}, and more generally, by Corvaja and Zannier \cite{CZ04'} (for complete intersections) and Evertse and Ferretti \cite{EF08} (for arbitrary projective varieties) to divisors which possess a common linearly equivalent multiple. Building on work of Autissier \cite{Aut11}, Ru and Vojta \cite{RV20} proved a general version of the Subspace Theorem in terms of beta constants (see Definition \ref{betadef}). Their inequality was subsequently extended to the context of closed subschemes by Ru and Wang \cite{RW22} and by Vojta \cite{Voj23} (under slightly different intersection conditions). We state the following general form of the inequality, due to Vojta \cite{Voj23} (in fact, Vojta proves a  stronger version of Theorem \ref{theoremRV} with the condition ``intersect properly" (Definition \ref{properdef}) replaced by ``weakly intersect properly" \cite[Definition ~4.1(c)]{Voj23}).

\begin{theorem}[Ru-Vojta \cite{RV20}, Ru-Wang \cite{RW22}, Vojta  \cite{Voj23}]
\label{theoremRV}
Let $X$ be a projective variety of dimension $n$ defined over a number field $K$.  Let $S$ be a finite set of places of $K$.  For each $v\in S$, let $Y_{0,v},\ldots, Y_{n,v}$ be closed subschemes of $X$, defined over $K$, that intersect properly.  Let $A$ be a big divisor on $X$, and let $\varepsilon>0$.  Then there exists a proper Zariski-closed subset $Z\subset X$ such that for all points $P\in X(K)\setminus Z$,
\begin{equation*}
\sum_{v\in S}\sum_{i=0}^{n} \beta(A, Y_{i,v}) \lambda_{Y_{i,v},v}(P)< (1+\varepsilon)h_A(P).
\end{equation*}
\end{theorem}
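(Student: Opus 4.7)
The plan is to combine Autissier's filtration technique with the Schmidt Subspace Theorem (Theorem \ref{subspace}), following the blueprint of Ru-Vojta \cite{RV20}, Ru-Wang \cite{RW22}, and Vojta \cite{Voj23}. First, I would reduce the closed subscheme statement to a statement about effective Cartier divisors. Taking a birational model $\pi : \widetilde{X} \to X$ obtained by iterated blowup along the $Y_{i,v}$, each inverse image ideal becomes principal and defines an effective Cartier divisor $\widetilde{Y}_{i,v}$ on $\widetilde{X}$. Local heights transform as $\lambda_{Y_{i,v},v}(\pi(\tilde P)) = \lambda_{\widetilde{Y}_{i,v},v}(\tilde P) + O(1)$, and the beta constants are birationally invariant through the induced identification of spaces of global sections, so it suffices to prove the analogue of the theorem on $\widetilde{X}$ with divisors in place of closed subschemes.

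Next, for a large integer $m$, I would work with $V_m = H^0(\widetilde{X}, \pi^*(mA))$ and, for each $v \in S$ and each $i$, consider the filtration of $V_m$ by order of vanishing along $\widetilde{Y}_{i,v}$. The definition of $\beta(A, Y_{i,v})$ precisely controls the asymptotic average vanishing order of a basis adapted to this filtration. The key geometric input is that at each place $v$, the properly intersecting hypothesis on $Y_{0,v}, \ldots, Y_{n,v}$ allows the construction of a basis $\mathcal{B}_v$ of $V_m$ simultaneously adapted to all $n+1$ filtrations at $v$, satisfying
\begin{equation*}
\sum_{s \in \mathcal{B}_v} \ord_{\widetilde{Y}_{i,v}}(s) \geq \bigl( \beta(A, Y_{i,v}) - o(1) \bigr)\, m \dim V_m
\end{equation*}
for each $i$, as $m \to \infty$. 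This simultaneous adaptation across all $n+1$ filtrations is where the properly intersecting hypothesis plays its essential role, and is likely to be the main technical obstacle; it requires an inductive refinement of graded pieces using the compatible factoring of $\mathcal{O}_{\widetilde X}/\mathcal{I}$ supplied by the proper intersection condition, refined by pigeonhole across the dimension $n+1$ families of filtrations.

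Finally, any basis of $V_m$ defines the same rational map $\phi_m : \widetilde{X} \dashrightarrow \mathbb{P}^{N_m - 1}$, with $N_m = \dim V_m$; the bases $\mathcal{B}_v$ simply provide the best coordinate system at $v$. The divisors of the sections $s \in \mathcal{B}_v$ pull back from hyperplanes in $\mathbb{P}^{N_m - 1}$, and, outside a proper Zariski-closed subset, are in general position. Applying Schmidt's Subspace Theorem to $\phi_m(P)$ yields
\begin{equation*}
\sum_{v \in S} \sum_{s \in \mathcal{B}_v} \lambda_{\dv(s),v}(P) < (N_m + \varepsilon') \, h_{\pi^*(mA)}(P) + O(1)
\end{equation*}
off a proper Zariski-closed subset. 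Combining with the lower bound $\lambda_{\dv(s),v}(P) \geq \sum_i \ord_{\widetilde{Y}_{i,v}}(s)\, \lambda_{\widetilde{Y}_{i,v},v}(P) - O(1)$, summing over $s$, inserting the filtration estimate from the previous step, dividing by $m N_m$, and letting $m \to \infty$ (using $h_{\pi^*(mA)} = m h_A + O(1)$ and stabilization of $N_m / m^n$), one recovers the stated inequality after absorbing the residual errors into $\varepsilon$.
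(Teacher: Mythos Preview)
The paper does not supply its own proof of Theorem~\ref{theoremRV}; it is quoted as a known result due to Ru--Vojta, Ru--Wang, and Vojta. What the paper does prove is the two-dimensional case, Corollary~\ref{RVsurfaces}, and there the route is entirely different from your sketch: rather than building adapted bases directly, the paper deduces the inequality from the Main Theorem~\ref{BetaMain}. Concretely, at each place $v$ one reduces to the two ``closest'' divisors $D_{1,v},D_{2,v}$, rewrites $\lambda_{D_{2,v},v}$ as $\lambda_{D_{1,v}\cap D_{2,v},v}$ (up to $O(1)$), invokes Lemma~\ref{betalcm} to get $\beta(\mathscr{L},D_{1,v}\cap D_{2,v})\geq \beta(\mathscr{L},D_{1,v})+\beta(\mathscr{L},D_{2,v})$, and then feeds the regular chain $D_{1,v}\supset D_{1,v}\cap D_{2,v}$ into Theorem~\ref{BetaMain}. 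So in dimension two the paper's argument is a reduction to its own main inequality, not an independent filtration/Subspace argument.

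Your proposal, by contrast, outlines the original Ru--Vojta scheme in arbitrary dimension. The broad shape is right, but one step is stated in a misleading way. You assert that proper intersection ``allows the construction of a basis $\mathcal{B}_v$ of $V_m$ simultaneously adapted to all $n+1$ filtrations.'' The Filtration Lemma (Lemma~\ref{filtrationlemma}) only handles two filtrations, and in general no single basis is adapted to three or more. What proper intersection actually buys is that a section vanishing to order $a_i$ along each $\widetilde Y_{i,v}$ vanishes to order $\sum_i a_i$ along the ideal product, so one can replace the $n+1$ separate filtrations by a single combined filtration (or a finite family of weighted ones) and then run the one-filtration argument; this is the mechanism in \cite{RV20,RW22,Voj23}. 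Your lower bound $\lambda_{\dv(s),v}(P)\geq \sum_i \ord_{\widetilde Y_{i,v}}(s)\,\lambda_{\widetilde Y_{i,v},v}(P)$ likewise needs $\dv(s)\geq \sum_i \ord_{\widetilde Y_{i,v}}(s)\,\widetilde Y_{i,v}$, which is exactly the content of proper intersection (no shared components after blowup) and should be justified there rather than asserted at the end. With those adjustments your outline matches the cited proofs; it is simply not what the present paper does.
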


Heier and the second author \cite{HL21} proved an inequality using Seshadri constants (Definition \ref{Seshadridef}) in place of beta constants, and with the proper intersection condition of Theorem \ref{theoremRV} replaced by a flexible notion of general position for closed subschemes (Definition \ref{generalpositiondef}).

\begin{theorem}[Heier-Levin \cite{HL21}]
\label{theoremHL}
Let $X$ be a projective variety of dimension $n$ defined over a number field $K$.  Let $S$ be a finite set of places of $K$.  For each $v\in S$, let $Y_{0,v},\ldots, Y_{n,v}$ be closed subschemes of $X$, defined over $K$, and in general position.  Let $A$ be an ample Cartier divisor on $X$, and $\varepsilon>0$.  Then there exists a proper Zariski-closed subset $Z\subset X$ such that for all points $P\in X(K)\setminus Z$,
\begin{equation*}
\sum_{v\in S}\sum_{i=0}^n \epsilon(A,Y_{i,v}) \lambda_{Y_{i,v},v}(P)< (n+1+\varepsilon)h_A(P).
\end{equation*}
\end{theorem}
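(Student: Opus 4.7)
The plan is to reduce the theorem to the Ru--Vojta inequality (Theorem \ref{theoremRV}) by replacing each closed subscheme $Y_{i,v}$ with an effective divisor $D_{i,v}\in |mA|$ having high multiplicity along $Y_{i,v}$, and then to exploit the equality $\beta(A,D)=\frac{1}{m(n+1)}$ for $D\in|mA|$ (a direct asymptotic Riemann--Roch computation) to recover the correct leading constant $n+1$.

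First, fix $\delta>0$ and, for each $v\in S$ and each $i$, pick a rational $\gamma_{i,v}$ with $(1-\delta)\epsilon(A,Y_{i,v})<\gamma_{i,v}<\epsilon(A,Y_{i,v})$. By Definition \ref{Seshadridef} the class $\pi_i^*A-\gamma_{i,v}E_i$ is ample on the blowup $\pi_i\colon\widetilde X_i\to X$ along $Y_{i,v}$, so asymptotic Riemann--Roch on $\widetilde X_i$ gives a positive lower bound of order $m^n$ on $h^0\!\bigl(X,\mathcal I_{Y_{i,v}}^{\lceil \gamma_{i,v} m\rceil}\otimes\mathcal O(mA)\bigr)$ for $m$ sufficiently divisible. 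Any nonzero section $s$ here produces an effective divisor $D=(s)_0\in|mA|$ with $\mathrm{mult}_{Y_{i,v}}D\ge \gamma_{i,v} m$, and standard functoriality of local heights yields
\[
\epsilon(A,Y_{i,v})\,\lambda_{Y_{i,v},v}(P)\le\frac{\epsilon(A,Y_{i,v})}{\gamma_{i,v} m}\,\lambda_{D,v}(P)+O(1)\le\frac{1}{(1-\delta)m}\,\lambda_{D,v}(P)+O(1).
\]

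Second, I would use the general position hypothesis on $Y_{0,v},\ldots,Y_{n,v}$ to select, for each $v\in S$, divisors $D_{0,v},\ldots,D_{n,v}\in |mA|$ that both (a) satisfy $\mathrm{mult}_{Y_{i,v}} D_{i,v}\ge \gamma_{i,v}m$ and (b) intersect properly on $X$. The abundance of high-multiplicity sections produced above gives many candidates at each $i$, and the general position of the underlying subschemes (Definition \ref{generalpositiondef}) should allow a generic choice of one section in each of the $n+1$ linear systems to yield an $(n+1)$-tuple of divisors intersecting properly. Applying Theorem \ref{theoremRV} to $\{D_{i,v}\}$ with $\beta(A,D_{i,v})=\tfrac{1}{m(n+1)}$ then gives, outside a proper Zariski-closed subset,
\[
\sum_{v\in S}\sum_{i=0}^n\frac{1}{m(n+1)}\,\lambda_{D_{i,v},v}(P)<(1+\varepsilon')h_A(P).
\]
Combining with step one and letting $\delta,\varepsilon'\to 0$ (while $m\to\infty$) recovers the desired inequality with constant $n+1+\varepsilon$.

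The main technical obstacle is step two: realizing the Heier--Levin general position of the $Y_{i,v}$ as proper intersection of suitably chosen high-multiplicity divisors in $|mA|$. The Seshadri constant only controls multiplicity asymptotically, and divisors in a fixed linear series with prescribed high multiplicity along given loci may fail to intersect properly even when the underlying subschemes do. One must combine the incidence-theoretic information supplied by Definition \ref{generalpositiondef} with a Bertini-type or dimension-counting argument on the product of the linear systems $H^0\!\bigl(X,\mathcal I_{Y_{i,v}}^{\lceil\gamma_{i,v}m\rceil}\otimes\mathcal O(mA)\bigr)$ to guarantee the existence of a tuple $(D_{0,v},\ldots,D_{n,v})$ meeting both requirements, uniformly in $v\in S$, while keeping track of the $O(1)$ error terms and absorbing them into an enlargement of the exceptional set via an $h_A$-comparison.
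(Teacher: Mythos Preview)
This theorem is not proved in the present paper; it is quoted from \cite{HL21} as background. The only thing the paper itself establishes in this direction is the unnumbered Corollary following Corollary~\ref{RVsurfaces}, which recovers the case $n=2$ under the extra hypotheses that $X$ is nonsingular and $D_v\supset Y_v$ is a regular chain, by combining Theorem~\ref{BetaMain} with the inequality $\beta(A,D)\ge \tfrac{1}{3}\epsilon(A,D)$ and Lemma~\ref{betaseshadri}. So there is no ``paper's own proof'' of the full statement to compare against.

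That said, your plan is essentially the strategy of the original \cite{HL21} argument, and you have correctly located its genuine content: the construction of properly intersecting divisors $D_{0,v},\dots,D_{n,v}$ in $|mA|$ with prescribed high multiplicity along the $Y_{i,v}$. Two comments on the gap you flag. First, in \cite{HL21} (and as echoed in the proof of Lemma~\ref{betaseshadri} here) the trick is not to insist on $D_{i,v}\in|mA|$ but to allow $D_{i,v}\sim m(1+\delta)A$: the extra $\delta$-ample part buys enough freedom (via a Bertini/perturbation argument) to move each divisor into general position with the previously chosen ones while retaining the multiplicity bound. This is exactly the mechanism by which the general position of the $Y_{i,v}$ is upgraded to proper intersection of the $D_{i,v}$. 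Second, your final clause ``letting $\delta,\varepsilon'\to 0$ while $m\to\infty$'' should not be read as a genuine limit over exceptional sets: for the target $\varepsilon$ one fixes $\delta$ and $\varepsilon'$ small and then a single sufficiently divisible $m$, applies Theorem~\ref{theoremRV} once, and obtains a single $Z$. With these two points addressed your outline becomes the \cite{HL21} proof.
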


In both Theorem \ref{theoremRV} and Theorem \ref{theoremHL}, one may also apply the results with fewer than $n+1$ chosen closed subschemes at $v$ (e.g., by appropriately arbitrarily choosing the remaining closed subschemes and using positivity of the associated local heights outside a closed subset); we will use this fact without further remark. 

Before stating our main result, we give some further remarks comparing Theorem~\ref{theoremRV} and Theorem~\ref{theoremHL}. For simplicity and to avoid technical issues, we assume now for the discussion that $X$ is nonsingular of dimension $n$ and we fix an ample divisor $A$ on $X$. If the closed subschemes $Y_{i,v}=D_{i,v}$ are all ample effective divisors, then by our nonsingularity assumption, $D_{0,v},\ldots, D_{n,v}$ intersect properly if and only if they are in general position (Remark \ref{remgenproper}). Furthermore, we note that if $D_{i,v}\sim d_{i,v} A$, then it follows easily from the asymptotic Riemann-Roch formula and the definitions that
\begin{align*}
\beta(A,D_{i,v})&=\frac{1}{(n+1)d_{i,v}},\\
\epsilon(A,D_{i,v})&=\frac{1}{d_{i,v}}.
\end{align*}
In this case, Theorem \ref{theoremRV} and Theorem \ref{theoremHL} coincide and yield the inequality (outside a Zariski-closed subset)
\begin{align*}
\sum_{v\in S}\sum_{i=0}^n \frac{\lambda_{D_{i,v},v}(P)}{d_{i,v}}< (n+1+\varepsilon)h_A(P),
\end{align*}
which is precisely the inequality of the aforementioned theorem of Evertse and Ferretti.

Another interesting example comes from considering a single closed subscheme $Y$ of $X$, which we assume additionally to be a local complete intersection (to satisfy the proper intersection hypothesis of Theorem \ref{theoremRV}). In this case, if $\codim Y=r$, from \cite{HL21} we have the inequality
\begin{align*}
\beta(A,Y)\geq \frac{r}{n+1}\epsilon(A,Y),
\end{align*}
and so Theorem \ref{theoremRV} yields (choosing $Y_{0,v}=Y$ for all $v$)
\begin{align}
\label{ineqMR}
\sum_{v\in S}\epsilon(A,Y)\lambda_{Y,v}(P)< \left(\frac{n+1}{r}+\varepsilon\right)h_A(P).
\end{align}
This is a generalization of an inequality of McKinnon and Roth \cite{MR15} (who proved the case when $Y$ is a point). On the other hand, in Theorem \ref{theoremHL} one may take $Y_{0,v}=\cdots =Y_{r-1,v}=Y$ for all $v$ (i.e., repeat $Y$ as the choice of closed subscheme $r$ times), as is allowed by the used notion of general position (Definition \ref{generalpositiondef}), and one again obtains the McKinnon-Roth type inequality \eqref{ineqMR}. In short, there seems to be a tradeoff between the flexible notion of general position in Theorem \ref{theoremHL} and the ``better" beta constant of Theorem \ref{theoremRV} (as compared to the Seshadri constant). 

%Indeed, if $L$ is a line in $\mathbb{P}^2$ and $P$ is a point in $\mathbb{P}^2$ (with the reduced induced closed subscheme structure) then $\beta(L,L)=\frac{1}{3}$ and $\beta(L,P)=\frac{2}{3}$.

The goal of our main result is to provide, in the case of surfaces, an inequality which combines the separate advantages of Theorem \ref{theoremRV} and Theorem \ref{theoremHL}; we prove an inequality in terms of beta constants of closed subschemes, under hypotheses which permit the use of nested closed subschemes (as allowed in Theorem \ref{theoremHL}, but not in Theorem \ref{theoremRV}).

\begin{theorem}[Main Theorem]\label{BetaMain}
Let $X$ be a projective surface defined over a number field $K$. Let $S$ be a finite set of places of $K$. For each $v\in S$, let $D_v\supset Y_v$ be a regular chain of nonempty closed subschemes of $X$ (see Definition \ref{regulardef}). Let $A$ be a big Cartier divisor on $X$, and let $\varepsilon > 0$. Then there exists a proper Zariski-closed subset $Z\subset X$ such that for all $P\in X(K)\setminus Z$, we have 
\begin{equation}
\label{BetaMainineq}
    \begin{aligned}
        \sum_{v\in S} \bigg(\beta(A,D_v) %\cdot 
        \lambda_{D_v,v}(P) + 
       \left( \beta(A, Y_v) - \beta(A, D_v) \right) \lambda_{Y_v,v}(P) \bigg) < (1 + \varepsilon) h_A(P).
    \end{aligned}
\end{equation}
\end{theorem}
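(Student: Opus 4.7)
My plan is to run the Ru--Vojta filtration strategy underlying Theorem~\ref{theoremRV}, but with a single basis of $H^0(X, NA)$ simultaneously adapted to the two filtrations coming from $D_v$ and $Y_v$, so as to exploit both beta constants at once. This is where the surface hypothesis and the regular chain structure enter decisively.

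Fix a large integer $N$ and set $M = h^0(X, NA)$. For each $v \in S$, introduce the decreasing filtrations
$$F_a^v := H^0(X, NA - a D_v), \qquad G_c^v := H^0(X, NA \otimes \mathcal{I}_{Y_v}^c).$$
Since $Y_v \subset D_v$ gives $\mathcal{I}_{D_v} \subset \mathcal{I}_{Y_v}$, we have $F_a^v \subset G_a^v$. By a standard linear-algebra argument (lifting adapted bases of the successive $F$-graded pieces while tracking $G$-depth) one constructs a basis $B_v = \{s_{v,1}, \ldots, s_{v,M}\}$ of $H^0(X, NA)$ with weights $(a_i^v, c_i^v) = (\ord_{D_v}(s_{v,i}), \ord_{Y_v}(s_{v,i}))$ satisfying $c_i^v \geq a_i^v$, such that $\{s_{v,i} : a_i^v \geq a,\, c_i^v \geq c\}$ is a basis of $F_a^v \cap G_c^v$ for every $(a, c)$. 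Counting strata in each filtration separately gives
$$\sum_i a_i^v = \sum_{a \geq 1} h^0(X, NA - a D_v), \qquad \sum_i c_i^v = \sum_{c \geq 1} h^0(X, NA \otimes \mathcal{I}_{Y_v}^c),$$
which by the definitions of the beta constants equal $NM\beta(A, D_v) + o(NM)$ and $NM\beta(A, Y_v) + o(NM)$ as $N \to \infty$. Setting $b_i^v := c_i^v - a_i^v \geq 0$ then gives $\sum_i b_i^v = NM(\beta(A, Y_v) - \beta(A, D_v)) + o(NM)$.

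The next step is to establish the local height bound
$$\lambda_{s_{v,i}, v}(P) \geq a_i^v\, \lambda_{D_v, v}(P) + b_i^v\, \lambda_{Y_v, v}(P) - O(1),$$
for which the regular chain hypothesis is essential. Locally near $Y_v$ the hypothesis gives $\mathcal{I}_{D_v} = (f)$ and $\mathcal{I}_{Y_v} = (f, g)$ for a regular sequence $(f, g)$, yielding the algebraic identity $\mathcal{I}_{D_v}^a \cap \mathcal{I}_{Y_v}^{a+b} = f^a \mathcal{I}_{Y_v}^b$. Therefore $s_{v,i} = f^{a_i^v} h_{v,i}$ with $h_{v,i} \in \mathcal{I}_{Y_v}^{b_i^v}$; expanding $h_{v,i}$ in the generators $\{f^p g^q : p + q = b_i^v\}$ gives $|h_{v,i}(P)|_v \leq C \max(|f(P)|_v, |g(P)|_v)^{b_i^v}$, and combining with $\lambda_{D_v, v}(P) = -\log|f(P)|_v + O(1)$ and $\lambda_{Y_v, v}(P) = -\log\max(|f(P)|_v, |g(P)|_v) + O(1)$ yields the bound.

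Finally, I apply Schmidt's Subspace Theorem (Theorem~\ref{subspace}) to the embedding $X \hookrightarrow \mathbb{P}^{M-1}$ defined by a basis of $H^0(X, NA)$ (after Fujita approximation, since $A$ is only assumed big). Because $B_v$ is a basis, its $M$ associated hyperplanes lie in general position in $\mathbb{P}^{M-1}$, and Schmidt's theorem gives
$$\sum_{v \in S}\sum_{i=1}^M \lambda_{s_{v,i}, v}(P) < (M + \varepsilon'')\, h_{NA}(P) + O(1)$$
outside a proper Zariski-closed subset. Combining with the local height lower bound, using $h_{NA} = N h_A + O(1)$, absorbing the error term $o(NM) \sum_v (\lambda_{D_v, v}(P) + \lambda_{Y_v, v}(P))$ via the standard bounds $\lambda_{D_v, v}(P), \lambda_{Y_v, v}(P) = O(h_A(P))$, and dividing by $NM$ yields the desired inequality. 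The principal obstacle is the uniformity in paragraph~2: that a single basis can simultaneously realize both $NM\beta(A, D_v)$ and $NM\beta(A, Y_v)$ as weight sums asymptotically. This ultimately relies on $Y_v$ having codimension one in $D_v$ (from the regular chain hypothesis), which in the nontrivial case forces $\dim X = 2$.
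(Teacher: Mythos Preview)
Your proposal is correct and follows essentially the same strategy as the paper's proof: two filtrations on $H^0(X,NA)$ (by $D_v$ and by $Y_v$), a basis adapted to both via the Filtration Lemma, the regular-sequence colon identity $((f,g)^{a+b}:(f)^a)=(f,g)^b$ (the paper's Lemma~\ref{ideallemma}) to peel off the $D_v$-order from the $Y_v$-order, and then the Subspace Theorem. The paper differs only in presentation---it first reduces to $Y_v$ supported at a single point, works with the aggregate divisor $\sum_i\operatorname{div}(s_{i,v})$ and Lemma~\ref{orderlemma} rather than section-by-section, and invokes Schmidt in the packaged form of \cite[Theorem~2.10]{RV20} rather than Theorem~\ref{subspace} with an ad hoc Fujita-approximation remark.
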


In Section \ref{sec: main}, we show that our main result implies the Ru-Vojta inequality for surfaces, and Theorem \ref{theoremHL} for surfaces under regularity assumptions.  Thus, we may view Theorem \ref{BetaMain} as a kind of joint generalization, for surfaces, of the inequalities of Ru-Vojta and of Heier and the second author.

We remark that the regular chain assumption is quite natural from the point of view of classical Diophantine approximation, where one is interested in sums of local heights associated to divisors. Indeed, if $D_1,\ldots, D_q$ are effective divisors in general position on a nonsingular projective surface $X$, all defined over a number field $K$, $S$ is a set of places of $K$, and we fix a point $P\in X(K)$, then from general position a point can be $v$-adically close to at most two divisors $D_i$. Thus,
\begin{align*}
\sum_{v\in S}\sum_{i=1}^q \lambda_{D_i,v}(P)=\sum_{v\in S} \left(\lambda_{D_{i_v},v}(P)+\lambda_{D_{j_v},v}(P)\right) +O(1),
\end{align*}
for some choice of $i_v,j_v\in \{1,\ldots, q\}$ depending on $P$ and $v$ (but with the constant in the $O(1)$ independent of $P$). Now assuming $\lambda_{D_{i_v},v}(P)\geq \lambda_{D_{j_v},v}(P)$ (as we may), from a basic property of heights associated to closed subschemes (see Subsection \ref{subsec: subscheme}) we have (up to $O(1)$)
\begin{align*}
\lambda_{D_{i_v}\cap D_{j_v},v}(P)=\min\{\lambda_{D_{i_v},v}(P),\lambda_{D_{j_v},v}(P)\}=\lambda_{D_{j_v},v}(P).
\end{align*}
Therefore, we may write
\begin{align}
\label{regseqarg}
\sum_{v\in S}\sum_{i=1}^q\lambda_{D_i,v}(P)=\sum_{v\in S}
\left(\lambda_{D_{i_v},v}(P)+\lambda_{D_{i_v}\cap D_{j_v},v}(P)\right)+O(1),
\end{align}
where now $D_{i_v}\supset D_{i_v}\cap D_{j_v}$ is a regular chain of closed subschemes of $X$ for all $v\in S$. Although the inequality \eqref{regseqarg} is essentially elementary (relying at its base on Hilbert's Nullstellensatz), we will find the underlying argument useful in a number of proofs and applications.

Among the potential applications of our main inequality \eqref{BetaMainineq}, in Section \ref{sec:applications} we study inequalities related to greatest common divisors, the Diophantine equation $f(a^m,y)=b^n$, integral points on certain affine surfaces, and solutions to families of unit equations. For instance, we prove the following result for integral points on certain complements of three curves in $\mathbb{P}^2$:

\begin{theorem}\label{nonGeneralP2}
Let $D_1, D_2, D_3$ be distinct irreducible projective curves in $\mathbb{P}^2$, defined over a number field $K$, of degrees $d_1,d_2,d_3$, respectively, such that
\begin{align*}
D_1\cap D_2\cap D_3\neq \emptyset,
\end{align*}
and for every point $Q\in (D_1\cap D_2\cap D_3)(\kbar)$ and $1\leq i<j\leq 3$, 
\begin{align}
\label{49ineq}
(D_i.D_j)_Q<\frac{4}{9}d_id_j,
\end{align}
where $(D_i.D_j)_Q$ denotes the local intersection multiplicity of $D_i$ and $D_j$ at $Q$.  Let $S$ be a finite set of places of $K$ containing all the archimedean places. Then there exists a proper Zariski-closed subset $Z\subset \mathbb{P}^2$ such that for any set $R\subset X(K)$ of $(D_1 + D_2 + D_3, S)$-integral points, the set $R\setminus Z$ is finite.
\end{theorem}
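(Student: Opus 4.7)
Take $A=H$, the hyperplane class on $\mathbb{P}^{2}$, so that $\beta(H,D_{i})=\frac{1}{3d_{i}}$. For an $S$-integral point $P\in R$, by definition of integrality,
\[
\sum_{v\in S}\sum_{i=1}^{3}\lambda_{D_{i},v}(P)=(d_{1}+d_{2}+d_{3})h(P)+O(1),
\]
so it suffices to use Theorem~\ref{BetaMain}, with a regular chain $D_{v}\supset Y_{v}$ chosen cleverly, to derive the opposite-sense strict inequality $\sum_{v,i}\lambda_{D_{i},v}(P)<(d_{1}+d_{2}+d_{3}-\delta)h(P)+O(1)$ for some $\delta>0$; Northcott's theorem then forces $R\setminus Z$ to be finite.

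For each $v\in S$ and each $P\notin D_{1}\cup D_{2}\cup D_{3}$, let $(i_{v},j_{v},k_{v})$ be the permutation of $(1,2,3)$ with $\lambda_{D_{i_{v}},v}(P)\geq\lambda_{D_{j_{v}},v}(P)\geq\lambda_{D_{k_{v}},v}(P)$, let $Q_{v}$ be the $v$-adically nearest point of $D_{i_{v}}\cap D_{j_{v}}$ to $P$, and set $D_{v}:=D_{i_{v}}$ together with $Y_{v}:=(D_{i_{v}}\cap D_{j_{v}})_{Q_{v}}$, the zero-dimensional local component at $Q_{v}$ of length $m_{v}:=(D_{i_{v}}\cdot D_{j_{v}})_{Q_{v}}$; this is a regular chain in the sense of Definition~\ref{regulardef}. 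The data $(i_{v},j_{v},Q_{v})_{v\in S}$ has only finitely many combinatorial types, so applying Theorem~\ref{BetaMain} to each type and taking the union of the resulting exceptional sets produces a proper closed $Z\subset\mathbb{P}^{2}$ for which, outside $Z$,
\[
\sum_{v\in S}\left[\frac{\lambda_{D_{i_{v}},v}(P)}{3d_{i_{v}}}+\left(\frac{2}{3\sqrt{m_{v}}}-\frac{1}{3d_{i_{v}}}\right)\lambda_{Y_{v},v}(P)\right]<(1+\varepsilon)h(P)+O(1).
\]
Here $\beta(H,Y_{v})=\frac{2}{3\sqrt{m_{v}}}$ is verified by a local count showing that $(f_{1},f_{2})^{N}$ imposes $m_{v}\binom{N+1}{2}$ conditions on forms of degree $d\gg0$, and the identity $\lambda_{Y_{v},v}(P)=\min(\lambda_{D_{i_{v}},v}(P),\lambda_{D_{j_{v}},v}(P))+O(1)=\lambda_{D_{j_{v}},v}(P)+O(1)$ follows from $\mathcal{I}_{Y_{v}}=(f_{1},f_{2})$ locally at $Q_v$.

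To compare with integrality, split by the location of $Q_{v}$. When $Q_{v}\notin D_{k_{v}}$ (so $Q_v$ is not a triple intersection point), $P$ being $v$-adically close to $Q_{v}$ forces $\lambda_{D_{k_{v}},v}(P)=O(1)$, and a Nullstellensatz argument in the style of \eqref{regseqarg} reduces the comparison to two divisors, which Theorem~\ref{BetaMain} handles on the nose. When $Q_{v}\in D_{1}\cap D_{2}\cap D_{3}$, however, all three local heights $\lambda_{D_{i},v}(P)$ can be of the same order of magnitude (proportional to $-\log|P-Q_{v}|_{v}$ weighted by $\mathrm{mult}_{Q_{v}}(D_{i})$), and the hypothesis $m_{v}<\tfrac{4}{9}d_{i_{v}}d_{j_{v}}$ translates to $\sqrt{m_{v}}<\tfrac{2}{3}\sqrt{d_{i_{v}}d_{j_{v}}}$ and hence $\beta(H,Y_{v})>\tfrac{1}{\sqrt{d_{i_{v}}d_{j_{v}}}}$; this strict improvement is precisely what is needed to absorb the contribution of the third curve's local height and to produce the required uniform $\delta>0$. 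The main obstacle lies in this local analysis at a triple intersection point: both verifying the $\beta$-formula in the presence of possible tangencies or higher-multiplicity singularities of $D_{1}$ and $D_{2}$ at $Q_{v}$, and quantitatively checking that the $4/9$ margin dominates the third local height $\lambda_{D_{k_v},v}(P)$ uniformly over all allowed multiplicity configurations $\mathrm{mult}_{Q_v}(D_i)$.
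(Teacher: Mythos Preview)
Your setup is sound and, in fact, what you outline recovers exactly Step~1 of the paper's argument: with the normalization $a_iD_i\equiv D$, one application of Theorem~\ref{BetaMain} together with the $4/9$ hypothesis (which forces $\beta(D,(a_iD_i\cap a_jD_j)_Q)>1$, i.e.\ $3\beta-1>2$) yields
\[
\sum_{v}\Big(\lambda_{a_{i_v}D_{i_v},v}+(3\beta_{Q_v}-1)\lambda_{a_{j_v}D_{j_v},v}\Big)<(3+\varepsilon)h_D(P),
\]
while integrality gives $\sum_v(\lambda_{a_{i_v}D_{i_v},v}+\lambda_{a_{j_v}D_{j_v},v}+\lambda_{a_{k_v}D_{k_v},v})\geq (3-\varepsilon)h_D(P)$. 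Subtracting, you get only that the \emph{excess} $(3\beta_{Q_v}-3)\lambda_{a_{j_v}D_{j_v},v}$ summed over triple places is $<2\varepsilon h_D(P)$; equivalently, $h_{a_1D_1\cap a_2D_2\cap a_3D_3}(P)<\varepsilon h_D(P)$. This is useful, but it is \emph{not} the strict inequality $\sum_{v,i}\lambda_{D_i,v}(P)<(d_1+d_2+d_3-\delta)h(P)$ that you claim to derive. On the surface $X$ itself one always has $\beta(D,a_iD_i)=\tfrac13$ exactly, and no choice of regular chain can push the Main Theorem's right-hand side below $(3+\varepsilon)h_D$, so no contradiction with integrality is produced. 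The ``$4/9$ margin'' absorbs the third local height in the sense that it bounds it by $\varepsilon h_D(P)$, but it does not supply the missing $\delta$.

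The paper closes this gap by a second, genuinely different, step: it blows up $X$ at a point $Q\in D_1\cap D_2\cap D_3$, replaces each $a_iD_i$ by its strict transform $D_i'=a_i\pi^*D_i-E$, and applies Theorem~\ref{BetaMain} on $\tilde X$ with the ample class $\pi^*D-\delta E$. Lemma~\ref{exclemma} gives $\beta(\pi^*D-\delta E,\pi^*D-E)>\tfrac13$ strictly, and \emph{this} is the source of the $\delta$. Step~1 is still needed, because it guarantees $h_E(P)<\varepsilon h_{\pi^*D}(P)$ for integral points, so that integrality of the $D_i'$ inherits the full strength $(3-\varepsilon)h_{\pi^*D}$. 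Without the blow-up (or some analogous device producing $\beta>\tfrac13$ for the divisors themselves), your argument stalls after Step~1. A minor side remark: your claimed equality $\beta(H,Y_v)=\tfrac{2}{3\sqrt{m_v}}$ is in general only the inequality $\geq$ of Lemma~\ref{betamultsimple}; but only the inequality is needed, so this is not the obstruction.
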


When $d_1=d_2=d_3>1$ and $D_1\cap D_2\cap D_3$ consists of a single point, at which the divisors intersect transversally, the result follows from work of Corvaja and Zannier \cite{CZ06} (see Theorem \ref{CZnongen}). The factor $\frac{4}{9}$ in the intersection condition \eqref{49ineq} of Theorem~\ref{nonGeneralP2} cannot be replaced by anything larger than $\frac{1}{2}$ (Example \ref{12example}), and in fact a more general version of the theorem stated in terms of beta constants (Theorem \ref{nonGeneral}) can be shown to be sharp (Example \ref{beta1example}). We note that in general it seems to be a difficult problem to prove the degeneracy of integral points on the complement of three curves in $\mathbb{P}^2$. For instance, if $D=D_1+D_2+D_3$ is a normal crossings divisor of degree $\deg D\geq 4$ with $D_1, D_2, D_3$ geometrically integral curves in $\mathbb{P}^2$ (all defined over some number field $K$), then a suitable version of Vojta's conjecture predicts the same conclusion as in Theorem \ref{nonGeneralP2}, but this does not appear to be known (for all $K$ and $S$) for even a single such divisor $D$.

For a further application, by applying a general form of the above theorem (Theorem \ref{nonGeneral}) to certain surfaces, we study families of unit equations of the form:
\begin{align*}
f_1(t)u+f_2(t)v=f_3(t), \quad t\in \CO_{K,S}, u,v\in \CO_{K,S}^*,
\end{align*}
where $f_1,f_2,f_3$ are polynomials in $K[t]$, and $\CO_{K,S}^*$ is the group of $S$-units of a ring of $S$-integers $\CO_{K,S}$ of a number field $K$. This equation was treated in the case $\deg f_1=\deg f_2=\deg f_3$ by Corvaja and Zannier \cite{CZ06, CZ10}, and in the case $\deg f_1+\deg f_2=\deg f_3$ by the second author \cite{Lev06}. We prove a result whenever the degrees of the polynomials are roughly within a factor of $2$ of each other.

\begin{theorem}
Let $f_1,f_2,f_3\in K[t]$ be nonconstant polynomials without a common zero of degrees $d_1,d_2,d_3$, respectively, and suppose that
\begin{align*}
\max_i (d_i+1)<\frac{9}{5}\min_i d_i.
\end{align*}
Then the set of solutions $(t,u,v)\in\mathbb{A}^3(K)$ of the equation
\begin{align*}
f_1(t)u+f_2(t)v=f_3(t), \quad t\in \CO_{K,S}, u,v\in \CO_{K,S}^*,
\end{align*}
is contained in a finite number of rational curves in $\mathbb{A}^3$.
\end{theorem}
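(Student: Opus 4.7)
The plan is to realize the solution set as an integral-point problem on a projective surface and invoke Theorem~\ref{nonGeneral}. Set $d=\max(d_1,d_2,d_3)$ and let $F_i\in K[T_0,T_1]$ be the degree-$d_i$ homogenization of $f_i$. Consider the surface $X\subset \mathbb{P}^1_{[T_0:T_1]}\times \mathbb{P}^2_{[U:V:W]}$ cut out by the bihomogeneous equation
\begin{equation*}
T_0^{d-d_1} F_1 U + T_0^{d-d_2} F_2 V - T_0^{d-d_3} F_3 W = 0.
\end{equation*}
The no-common-zero hypothesis ensures $X$ is smooth, and the projection to $\mathbb{P}^1_t$ realizes $X$ as a Hirzebruch-type $\mathbb{P}^1$-bundle over $\mathbb{P}^1$. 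The affine surface $V=\{f_1u+f_2v=f_3\}\cap(\mathbb{A}^1\times\mathbb{G}_m^2)\subset\mathbb{A}^3$ is the complement in $X$ of the four boundary divisors $B_T,B_U,B_V,B_W$ obtained by restricting $\{T_0=0\},\{U=0\},\{V=0\},\{W=0\}$ to $X$.

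A triple $(t,u,v)$ with $t\in\CO_{K,S}$ and $u,v\in\CO_{K,S}^*$ is exactly a $(B_T+B_U+B_V+B_W,S)$-integral point of $X$. The next step is to group these four components into three effective divisors $D_1,D_2,D_3$ on $X$---in a way dictated by the intersection pattern of the $B_\ast$---and apply Theorem~\ref{nonGeneral} with an ample $A$ on $X$ (say the restriction of an ample class on $\mathbb{P}^1\times\mathbb{P}^2$). The classes, self-intersections, and pairwise local intersection multiplicities of $B_T,B_U,B_V,B_W$ on the Hirzebruch surface $X$ can be computed in closed form in terms of $d_1,d_2,d_3$, and hence so can the beta constants $\beta(A,D_i)$ via asymptotic Riemann--Roch. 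The resulting quantitative condition should reduce exactly to $\max_i(d_i+1)<\frac{9}{5}\min_i d_i$, with the factor $9/5$ emerging from the beta-constant arithmetic in analogy to how the $4/9$ appears in Theorem~\ref{nonGeneralP2}. Theorem~\ref{nonGeneral} then produces a proper Zariski-closed subset $Z\subset X$ containing all solutions.

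Finally, because $X$ is ruled, a direct case analysis of the possible irreducible components of $Z$---fibers of the ruling, section-like curves cut out by the defining equation, and the boundary components themselves---identifies each explicitly and verifies it is rational, yielding the desired finite union of rational curves in $V\subset\mathbb{A}^3$. The hardest part will be the quantitative step: choosing the correct grouping of the $B_\ast$ into three divisors and carrying out the beta-constant and intersection-multiplicity computations on $X$---especially in the unbalanced case, where the exponents $T_0^{d-d_i}$ distort the fiber over $t=\infty$ and the relevant auxiliary intersections---so as to land exactly on the clean inequality $\max_i(d_i+1)<\frac{9}{5}\min_i d_i$.
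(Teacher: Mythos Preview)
Your overall strategy---embed the solutions as integral points on a projective surface and invoke Theorem~\ref{nonGeneral}---is the right one, but the specific model you chose has a structural obstruction. Theorem~\ref{nonGeneral} requires the three divisors $D_1,D_2,D_3$ to be numerically \emph{parallel}: there must exist positive integers $a_i$ with $a_iD_i\equiv D$ for a common ample class $D$. On your surface $X\subset\mathbb{P}^1\times\mathbb{P}^2$, the divisor $B_T$ lies in the class $(1,0)|_X$ while $B_U,B_V,B_W$ lie in $(0,1)|_X$, and these restricted classes are not proportional in $\mathrm{Num}(X)$. No grouping of the four $B_*$ into three effective divisors will make them numerically parallel, so Theorem~\ref{nonGeneral} does not apply as stated. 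This is not a technicality you can finesse with a clever choice of $A$: the parallelism hypothesis is what makes the $\beta(D,a_iD_i)=\frac{1}{3}$ normalization work throughout the proof of Theorem~\ref{nonGeneral}.

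The paper sidesteps this by a substitution that reduces the number of boundary divisors to three and forces them to be linearly equivalent. After ordering so that $d_1\ge d_2\ge d_3$, one passes (via a finite extension of $K$ and $S$, using that $\CO_{K,S}^*$ is finitely generated) to the modified equation $f_1(t)u+f_2(t)v^{\,d_1+1-d_2}=f_3(t)$ and takes $X\subset\mathbb{P}^3$ to be the hypersurface $x_1F_1+x_2^{\,d_1+1-d_2}F_2-x_3^{\,d_1+1-d_3}F_3=0$. Now $D_i=\{x_i=0\}|_X$ for $i=1,2,3$ are hyperplane sections, hence linearly equivalent, and $P_0=[1{:}0{:}0{:}0]$ is the unique point of $D_1\cap D_2\cap D_3$. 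The local multiplicities $(D_i.D_j)_{P_0}=d_1+1-d_k$ and $D^2=d_1+1$ feed directly into Lemma~\ref{betamultsimple} to give $\beta(D,(D_i\cap D_j)_{P_0})\ge\frac{2}{3}\sqrt{(d_1+1)/(d_1+1-d_k)}$, and the condition~\eqref{beta1cond} then reduces to the stated inequality (of which the $9/5$ bound is a clean sufficient condition). Finally, the rationality of the exceptional curves is not obtained by a case analysis on a ruled surface but simply by Siegel's theorem: any curve $C\subset X$ meeting the ample divisors $D_i$ and carrying infinitely many $(D_1+D_2+D_3,S)$-integral points must be rational.
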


Excluding the linear case (proved in \cite{CZ06}), this recovers Corvaja-Zannier's result \cite{CZ10} (i.e., the case $d_1=d_2=d_3$).

Finally, we mention that via Vojta's dictionary \cite[Ch.~3]{Voj87} between Diophantine approximation and Nevanlinna theory, by substituting Vojta's version \cite{Voj97} of Cartan's Second Main Theorem in place of Schmidt's Subspace Theorem, one can prove a result analogous to Theorem \ref{BetaMain}, giving the following inequality in the vein of the Second Main Theorem:
\begin{theorem}
\label{NevBetaMain}
Let $X$ be a complex projective surface. Let $q$ be a positive integer and let $D_i\supset Y_i$ be a regular chain of nonempty closed subschemes of $X$ for $i=1,\ldots, q$. Let $A$ be a big Cartier divisor on $X$, and let $\varepsilon > 0$.  Then there exists a proper Zariski-closed subset $Z\subset X$ such that for all holomorphic maps $f:\bC \to X$ whose image is not contained in $Z$, the inequality 
\begin{align*}
\int_{0}^{2\pi}\max_i \bigg(\beta(A,D_i)\lambda_{D_i}(f(re^{i\theta})) + 
       ( \beta(A, Y_i) - \beta(A, D_i) )   \lambda_{Y_i}(f(re^{i\theta}))\bigg)&  \frac{d\theta}{2\pi}\\  
       &< (1 + \varepsilon) T_{f,A}(r).
\end{align*}
holds for all $r \in (0, \infty)$ outside of a set of finite Lebesgue measure.
\end{theorem}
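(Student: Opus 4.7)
The plan is to follow the proof of the arithmetic Main Theorem \ref{BetaMain}, translating each step via Vojta's dictionary between Diophantine approximation and Nevanlinna theory. Specifically, Vojta's version \cite{Voj97} of Cartan's Second Main Theorem takes the role of Schmidt's Subspace Theorem; the sum $\sum_{v\in S}$ with place-dependent pairs $(D_v, Y_v)$ becomes $\int_0^{2\pi}(\cdot)\,\frac{d\theta}{2\pi}$ with a $\max_{i}$ over the index set $\{1,\ldots, q\}$ inside the integrand; local Weil functions $\lambda_{D_v,v}$ are replaced by Nevanlinna-theoretic Weil functions $\lambda_{D_i}$; and heights $h_A$ are replaced by characteristic functions $T_{f,A}$. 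The exceptional Zariski-closed set $Z$ arises from the same source as in the arithmetic argument.

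For a large integer $N$ and each index $i\in\{1,\ldots,q\}$, I would build a two-step filtration of $H^0(X, NA)$ by refining first by order of vanishing along $D_i$ and then, within each graded piece, by order of vanishing along $Y_i$. Regularity of the chain $D_i \supset Y_i$ ensures that this filtration is computed by an honest basis $\mathcal{B}_i$ of $H^0(X, NA)$ whose sections have controlled joint vanishing along both subschemes. By definition of the beta constant, as $N \to \infty$ the normalized total weight along $D_i$ tends to $\beta(A, D_i)$ and the extra weight along $Y_i$ within the $D_i$-graded pieces tends to $\beta(A, Y_i) - \beta(A, D_i)$; this is precisely where the combination $\beta(A,D_i)\lambda_{D_i} + (\beta(A,Y_i) - \beta(A,D_i))\lambda_{Y_i}$ originates.

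Next, let $M = h^0(X, NA)$ and let $\phi: X \dashrightarrow \mathbb{P}^{M-1}$ be the rational map determined by $|NA|$. After a standard perturbation putting the hyperplanes $\{H_{s,i}\}_{s\in\mathcal{B}_i,\, i}$ in general position in $\mathbb{P}^{M-1}$ (absorbing the exceptional locus into $Z$), I would apply Vojta's Cartan-type Second Main Theorem to the holomorphic curve $\phi \circ f$. The max-over-$M$-subsets form built into Cartan's inequality is exactly what yields the $\max_i$ in the integrand of \eqref{NevBetaMain}: at each angle $\theta$ one selects the subset coming from a single basis $\mathcal{B}_{i(\theta)}$, and pulling back the Weil functions of the chosen hyperplanes along $\phi$ and using the vanishing-order estimates from the filtration produces the combination $\beta(A,D_i)\lambda_{D_i}(f(re^{i\theta})) + (\beta(A,Y_i)-\beta(A,D_i))\lambda_{Y_i}(f(re^{i\theta}))$ (up to an error absorbed into $\varepsilon$). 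The normalization $T_{\phi\circ f,\mathcal{O}(1)}(r) = N\cdot T_{f,A}(r) + O(1)$ then converts Cartan's bound of the form $(M + o(M))\, T_{\phi\circ f,\mathcal{O}(1)}(r)$, after dividing by $N\cdot h^0(X, NA)$ and invoking the beta-constant asymptotics, into the target right-hand side $(1+\varepsilon) T_{f,A}(r)$.

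The main obstacle, as in the arithmetic proof, is the general-position step: the hyperplanes arising from different bases $\mathcal{B}_i$ typically fail to be in general position in $\mathbb{P}^{M-1}$, particularly because of the nested structure $Y_i \subset D_i$ that forces many common base conditions among the $\mathcal{B}_i$. In the proof of Theorem \ref{BetaMain} this is handled by an inductive reduction to lower-dimensional subvarieties of $X$, which is short because $X$ is a surface. I would mimic this step directly in the analytic setting, invoking the classical Nevanlinna Second Main Theorem for meromorphic functions on $\bC$ (with truncation as needed) on the curves appearing in the induction and absorbing all exceptional loci into the final algebraic set $Z$. The remainder is formal bookkeeping of error terms and beta-constant asymptotics, parallel to the arithmetic argument.
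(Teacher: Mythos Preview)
Your overall plan---translate the proof of Theorem~\ref{BetaMain} via Vojta's dictionary, replacing the Subspace Theorem by Vojta's Cartan-type Second Main Theorem---is exactly what the paper does (the paper in fact omits all details and simply asserts this translation). So at the strategic level you are aligned with the paper.

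However, you misdescribe the arithmetic proof you are translating, and this introduces phantom obstacles. There is \emph{no} inductive reduction to lower-dimensional subvarieties in the proof of Theorem~\ref{BetaMain}, and there is no general-position perturbation of hyperplanes. The paper applies the Subspace Theorem directly in the form of \cite[Theorem~2.10]{RV20}, which takes as input a finite collection of bases of $H^0(X,\cO(NA))$ (one for each $v\in S$) with no general-position requirement; the exceptional set $Z$ is produced by that theorem. The Nevanlinna analogue likewise takes the union of the bases $\mathcal{B}_i$ as input and outputs the $\max_i$ inside the integrand, with $Z$ coming from Cartan's theorem itself---no perturbation or induction is needed. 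You should drop that entire paragraph. Also, two genuine ingredients from the arithmetic proof are missing from your sketch: the preliminary reduction to the case that each $Y_i$ is supported at a single point (using Remark~\ref{remzerodim} and $\beta(A,Y_{i,Q})\geq\beta(A,Y_i)$), and the use of Lemma~\ref{orderlemma} to show that after writing $\sum_l \dv(s_{l,i}) = \big(\sum_j h^0(NA-jD_i)\big)D_i + F_i$, the residual divisor $F_i$ satisfies $n_{Y_i}(F_i)\geq \sum_j h^0(\cO(NA)\otimes\mathcal{I}_i^j)-\sum_j h^0(NA-jD_i)-1$. This is where the regular chain hypothesis is actually used, and it is what produces the coefficient $\beta(A,Y_i)-\beta(A,D_i)$; your ``refined filtration within graded pieces'' description does not obviously yield this without the lemma.
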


One may similarly apply Theorem \ref{NevBetaMain} to obtain results on holomorphic curves. For instance, using Vojta's dictionary, one may use Theorem \ref{NevBetaMain} to prove the following analogue of Theorem \ref{nonGeneralP2} for holomorphic curves:

\begin{theorem}
Let $D_1, D_2, D_3$ be distinct irreducible complex projective curves in $\mathbb{P}^2$, of degrees $d_1,d_2,d_3$, respectively, such that
\begin{align*}
D_1\cap D_2\cap D_3\neq \emptyset,
\end{align*}
and for every point $Q\in (D_1\cap D_2\cap D_3)(\mathbb{C})$ and $1\leq i<j\leq 3$, 
\begin{align*}
(D_i.D_j)_Q<\frac{4}{9}d_id_j,
\end{align*}
where $(D_i.D_j)_Q$ denotes the local intersection multiplicity of $D_i$ and $D_j$ at $Q$.  Then there exists a proper Zariski-closed subset $Z\subset \mathbb{P}^2$ such that every nonconstant holomorphic map $f:\mathbb{C}\to X\setminus (D_1\cup D_2\cup D_3)$ has image contained in $Z$.
\end{theorem}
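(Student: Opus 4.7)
The plan is to derive the statement as the Nevanlinna counterpart of Theorem \ref{nonGeneralP2}, substituting Theorem \ref{NevBetaMain} for Theorem \ref{BetaMain} via Vojta's dictionary; the argument mirrors the arithmetic one, with each ingredient replaced by its analytic form.

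Since $f(\bC)\cap (D_1\cup D_2\cup D_3)=\emptyset$, the counting functions $N_f(r,D_i)$ vanish identically, so by the First Main Theorem $m_f(r,D_i)=T_{f,D_i}(r)+O(1)=d_i\,T_f(r)+O(1)$ for each $i$, where $T_f(r):=T_{f,H}(r)$ denotes the characteristic with respect to a hyperplane $H\subset\bP^2$. Summing gives
\begin{align*}
\int_0^{2\pi}\sum_{i=1}^{3}\lambda_{D_i}(f(re^{i\theta}))\,\frac{d\theta}{2\pi}=(d_1+d_2+d_3)\,T_f(r)+O(1).
\end{align*}
Next, apply the analytic version of the regular-chain reduction \eqref{regseqarg}. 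Since the $D_i$ are pairwise in general position (their pairwise intersections are finite), at every $z\in\bC$ the third-largest of the $\lambda_{D_i}(f(z))$ is bounded, and the second-largest is controlled (up to $O(1)$) by the proximity to the pairwise intersection. This rewrites the integrand above, up to $O(1)$, as a maximum over ordered pairs $(k,l)$ of the quantity $\lambda_{D_k}(f(z))+\lambda_{D_k\cap D_l}(f(z))$.

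Now invoke Theorem \ref{NevBetaMain} with $A=H$ and the regular chains $D_k\supset Y_k$, where the $Y_k$ are obtained by localizing the zero-dimensional complete intersections $D_k\cap D_l$ at the common points of $D_1\cap D_2\cap D_3$. A direct computation, using $D_k\sim d_k H$ and $\dim\bP^2=2$, gives $\beta(H,D_k)=\tfrac{1}{3d_k}$. Pairing the pointwise bound from the regular-chain step with the max appearing in Theorem \ref{NevBetaMain} and integrating then yields, outside the Zariski-closed set $Z$ produced by the theorem,
\begin{align*}
(d_1+d_2+d_3)\,T_f(r) < (1+\varepsilon)\,C\,T_f(r) + O(1)
\end{align*}
for a constant $C<d_1+d_2+d_3$, valid for $r$ outside a set of finite Lebesgue measure. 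For $\varepsilon$ sufficiently small this forces $T_f(r)$ to be bounded, hence $f$ constant, contradicting the nonconstancy hypothesis unless $f(\bC)\subset Z$.

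The hard part will be the beta-constant estimate for the subschemes $Y_k$ supported at the common intersection points and the verification that the resulting $C$ is strictly less than $d_1+d_2+d_3$. The hypothesis $(D_i.D_j)_Q<\tfrac{4}{9}\,d_id_j$ at each $Q\in D_1\cap D_2\cap D_3$ is precisely calibrated so that $\beta(H,Y_k)$ exceeds the threshold $\tfrac{2}{3d_k}$ needed to make the coefficient on the $\lambda_{Y_k}(f)$ term in Theorem \ref{NevBetaMain} exceed the coefficient on the $\lambda_{D_k}(f)$ term after a favorable scaling; the numerical factor $\tfrac{4}{9}$ arises from the interplay between $1/(n+1)=\tfrac{1}{3}$ for $\bP^2$ and the quadratic nature of pairwise intersection multiplicities at the common points. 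Once this beta-constant bound is in place, the rest of the proof is a routine transcription of the arithmetic argument for Theorem \ref{nonGeneralP2} into the Nevanlinna setting.
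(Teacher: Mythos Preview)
Your overall plan---translate the proof of Theorem~\ref{nonGeneral} into Nevanlinna theory via Theorem~\ref{NevBetaMain}---is the right one, and it is exactly what the paper does.  But the sketch you give misidentifies the structure of that proof and contains a genuine error.

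The error is the claim that ``at every $z\in\bC$ the third-largest of the $\lambda_{D_i}(f(z))$ is bounded.''  This is false precisely because the hypothesis is $D_1\cap D_2\cap D_3\neq\emptyset$: near a common point $Q$ all three proximity functions are simultaneously large.  So the two-term regular-chain reduction you invoke does not apply, and the sum you need to control has an extra $\lambda_{D_1\cap D_2\cap D_3}$ term that your argument never addresses.

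The paper's arithmetic proof (which is what must be translated) handles this in two separate stages.  First, one uses the $\tfrac{4}{9}$ hypothesis (via Lemma~\ref{betamultsimple}, giving $\beta(D,(a_iD_i\cap a_jD_j)_Q)>1$ after normalization) together with two applications of the Main Theorem to show that the proximity to the \emph{triple} intersection is $o(T_f(r))$.  Second---and this is the step entirely absent from your outline---one blows up at a point of $D_1\cap D_2\cap D_3$ and applies the Main Theorem on the blowup with the ample $\bQ$-divisor $\pi^*D-\delta E$; Lemma~\ref{exclemma} gives $\beta(\pi^*D-\delta E,\pi^*D-E)>\tfrac13$, and it is this strict improvement over $\tfrac13$ that produces a bound $C$ genuinely smaller than $d_1+d_2+d_3$.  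On the original $\bP^2$ with $A=H$ one only has $\beta(H,D_k)=\tfrac{1}{3d_k}$ on the nose, so Theorem~\ref{NevBetaMain} alone yields $(3+\varepsilon)T_f(r)$ on the upper side against $(3-\varepsilon)T_f(r)$ on the lower side, which gives no contradiction.  Your ``constant $C<d_1+d_2+d_3$'' is asserted but never obtained; the blowup and Lemma~\ref{exclemma} are the missing mechanism.
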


As the proofs of these analogous results in the complex setting are similar to the proofs of their arithmetic counterparts (after making the appropriate ``translations"), we omit the details.

The organization of this paper is as follows. Section \ref{sec: back} gives relevant background material, including a summary of Silverman's theory of heights associated to closed subschemes, and some needed results in commutative algebra, algebraic geometry, and Diophantine approximation. In Section \ref{sectionbeta} we give some basic inequalities involving beta constants, and then in Section \ref{sec: main} we prove our main theorem and show how it implies, for surfaces, versions of the inequality of Ru-Vojta and the inequality of Heier and the second author. Finally, in Section \ref{sec:applications} we give some illustrative Diophantine applications of our results.

%=========================================================================================================

\section{Background}\label{sec: back}

\subsection{Heights Associated to Closed Subschemes	}\label{subsec: subscheme}

In \cite{Sil87}, Silverman generalized the Weil height machine for Cartier divisors to height functions on projective varieties with respect to closed subschemes. 
More precisely, let $X$ be a projective variety over a number field $K$, and let $Z(X)$ denote the set of closed subschemes of $X$. Let $M_K$ be the set of places of $K$.
Note that the closed subschemes $Y \in Z(X)$ are in one-to-one correspondence with quasi-coherent ideal sheaves 
$\mathcal{I}_Y \subseteq \mathcal{O}_X$, and we identify a closed subscheme $Y$ with its ideal
sheaf $\mathcal{I}_{Y}$. Generalizing the Weil height machine for Cartier divisors, Silverman assigned to each $Y \in Z(X)$ and each place $v\in M_K$ a local height function $\lambda_{Y,v}$, 
and to each $Y \in Z(X)$ a global height function $h_Y=\sum_{v\in M_K}\lambda_{Y,v}$ (both uniquely determined up to a bounded function). We now summarize some of the basic properties of height functions associated to closed subschemes.

\begin{theorem}\label{thm: Silverman}(\cite{Sil87})
Let $X$ be a projective variety over a number field $K$. Let $Z(X)$ be the set of closed subschemes of $X$. There are maps
\begin{equation*}
    \begin{aligned}
Z(X) \times M_K &\rightarrow \{\text{functions }X(K) \rightarrow [0,+\infty] \},\\
(Y,v)&\mapsto \lambda_{Y,v},\\
Z(X) &\rightarrow \{\text{functions }X(K) \rightarrow [0,+\infty] \},\\
Y&\mapsto h_Y,
    \end{aligned}
\end{equation*}
satisfying the following properties (we also write $\lambda_{X,Y,v}$ and $h_{X,Y}$ for clarity in \eqref{subfun}):
\begin{enumerate}
		\item If $D\in Z(X)$ is an effective Cartier divisor, then $\lambda_{D,v}$ and $h_D$ agree with the classical height functions associated to $D$.
    \item   If $W,Y \in  Z(X)$ satisfy $W \subseteq Y$, then $h_W \le h_Y+O(1)$ and $\lambda_{W,v} \le \lambda_{Y,v}+O(1)$ for all $v\in M_K$. 
    \item  If $ W,Y \in Z(X)$ satisfy $\mathrm{Supp}(W) \subseteq \mathrm{Supp}(Y)$,
    then there exists a constant $C$  
such that $h_W \le C \cdot h_Y+O(1)$ and $\lambda_{W,v} \le C \cdot \lambda_{Y,v}+O(1)$ for all $v\in M_K$. 
\item For all $W, Y \in Z(X)$,
$\lambda_{W \cap Y, v}  = \min \{\lambda_{W,v}, \lambda_{Y,v}\}+O(1).$\label{subint}
\item For all $W,Y\in Z(X)$, we have $h_{W+Y}=h_W+h_Y+O(1)$ and $\lambda_{W+Y,v}=\lambda_{W,v}+\lambda_{Y,v}+O(1)$ for all $v\in M_K$.
\item Let $\phi: X' \rightarrow X$ be a morphism of projective varieties over $K$, and let $Y \in Z(X)$. Then
		\begin{align*}
		h_{X',\phi^*Y} &= h_{X, Y}\circ \phi+O(1),\\
		\lambda_{X', \phi^*Y,v} &= \lambda_{X, Y,v}\circ \phi+O(1),
		\end{align*}
		for all $v\in M_K$. \label{subfun}
\item If $D$ and $E$ are numerically equivalent Cartier divisors on $X$ and $A$ is an ample divisor on $X$, then for any $\varepsilon>0$, we have
\begin{align*}
|h_D(P)-h_E(P)|<\varepsilon h_A(P)+O(1)
\end{align*}
for all $P\in X(K)$. \cite[Proposition 1.2.9(d)]{Voj87}.
\end{enumerate}
\end{theorem}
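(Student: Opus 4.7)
The plan is to construct the functions $\lambda_{Y,v}$ and $h_Y$ explicitly via generating sections of twisted ideal sheaves, and then verify each of the seven listed properties. For the construction, I would start from the following observation: for any closed subscheme $Y \subset X$ with ideal sheaf $\mathcal{I}_Y$, one can find a very ample Cartier divisor $H$ on $X$ (depending on $Y$) such that $\mathcal{I}_Y \otimes \mathcal{O}_X(H)$ is globally generated, say by finitely many sections $s_0,\ldots,s_n$. Each $s_i$ defines an effective Cartier divisor $E_i = (s_i)_0$ linearly equivalent to $H$ and containing $Y$, and scheme-theoretically $Y = \bigcap_i E_i$. Fixing arbitrary bounded $v$-adic metrics $\|\cdot\|_v$ on $\mathcal{O}_X(H)$ in a coherent system across $v \in M_K$, I define
\[
\lambda_{Y,v}(P) \; := \; -\log \max_i \|s_i(P)\|_v \; = \; \min_i\bigl(\lambda_{E_i,v}(P) - \lambda_{H,v}(P)\bigr),
\]
valued in $[0,+\infty]$ with $+\infty$ exactly on $\mathrm{Supp}(Y)$, and $h_Y = \sum_{v\in M_K}\lambda_{Y,v}$.

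The principal difficulty, and the main obstacle, is Step 2: showing that this definition is independent, up to a bounded function, of all choices (the twist $H$, the generators $s_i$, and the metrics). For two presentations $(H,\{s_i\})$ and $(H',\{s'_j\})$ of the same ideal sheaf (or even compatible systems after twisting by a common $H''$ such that both $\mathcal{I}_Y(H+H'')$ and $\mathcal{I}_Y(H'+H'')$ are generated), one expresses each $s_i\cdot t$ (for $t$ ranging over generators of $\mathcal{O}(H'')$) as an $\mathcal{O}_X$-linear combination of the $s'_j$ and vice versa; by compactness/properness of $X$, the transition coefficients form an $M_K$-bounded set in Lang's sense, and the resulting local discrepancy is a globally bounded function on $X(\overline{K})$. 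This is exactly the Nullstellensatz-plus-compactness argument that underlies the Weil height machine for Cartier divisors, now applied to coherent ideal sheaves.

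Given well-definedness, the properties are verified as follows. Property (1) comes by taking $H$ large enough and the single section defining $\mathcal{O}(H-D) = \mathcal{I}_D\otimes\mathcal{O}(H)$, together with additional base-point-free sections, so that the $\min$ is realized (up to $O(1)$) by the generator cut out by $D$. For (2), an inclusion $W\subseteq Y$ gives $\mathcal{I}_Y\subseteq \mathcal{I}_W$, so generators of $\mathcal{I}_Y(H)$ are among those of $\mathcal{I}_W(H)$ after enlarging; taking the minimum over more sections can only decrease the height. For (3), Hilbert's Nullstellensatz yields $\mathcal{I}_Y^{\,c}\subseteq \mathcal{I}_W$ for some integer $c\geq 1$; the degree-$c$ monomials in the $s_i$ lie in $\mathcal{I}_W(cH)$, and their log-norms are $c$ times those of the $s_i$, giving $\lambda_{W,v}\leq c\,\lambda_{Y,v}+O(1)$. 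For (4), the equality $\mathcal{I}_{W\cap Y} = \mathcal{I}_W+\mathcal{I}_Y$ lets me take the union of generating sets for both (at a common twist $H$), and the formula $\min\{-\log a,-\log b\}=-\log\max\{a,b\}$ yields the $\min$ identity on the nose. For (5), the product ideal $\mathcal{I}_W\cdot\mathcal{I}_Y$ cuts out $W+Y$, and products of generators give generators of $(\mathcal{I}_W\cdot\mathcal{I}_Y)(2H)$, using multiplicativity of the norms (with the standard small adjustments at archimedean places). Property (6) is immediate from the fact that $\phi^*\mathcal{I}_Y=\mathcal{I}_{\phi^*Y}$ and sections pull back; the metrics pull back likewise to metrics on $\phi^*\mathcal{O}(H)$. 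Finally, (7) is a direct invocation of the classical Weil height machine applied to the divisor $D-E$, as in \cite[Prop.~1.2.9(d)]{Voj87}, since after the construction above $h_D$ depends, up to $O(1)$, only on the class of $D$ in $\mathrm{Pic}(X)\otimes\mathbb{R}$ when measured against an ample reference.

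Thus the hard work is entirely concentrated in the well-definedness argument (Step 2); all subsequent properties reduce to elementary manipulations with generators of ideal sheaves and the elementary identities $-\log\max = \min(-\log)$ and $\log(ab)=\log a+\log b$, plus in one case an appeal to the Nullstellensatz. After Step 2, the rest is essentially bookkeeping.
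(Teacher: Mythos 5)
Your construction is correct and is essentially the one in the cited source: the paper does not prove this theorem but quotes it from Silverman \cite{Sil87} (with item (7) from \cite[Prop.~1.2.9(d)]{Voj87}), and Silverman's construction is exactly your presentation of $Y$ as the scheme-theoretic intersection of the divisors $E_i=\dv(s_i)$ cut out by generating sections of $\mathcal{I}_Y\otimes\mathcal{O}_X(H)$, with $\lambda_{Y,v}=\min_i\lambda_{E_i,v}$, the $M_K$-bounded comparison of presentations for well-definedness, and the same generator/Nullstellensatz bookkeeping for properties (1)--(6). One harmless slip: since $-\log\|s_i(P)\|_v$ is already a Weil function for $E_i$ with respect to the chosen metric on $\mathcal{O}_X(H)$, your displayed formula should read $\min_i\lambda_{E_i,v}(P)$ rather than $\min_i\bigl(\lambda_{E_i,v}(P)-\lambda_{H,v}(P)\bigr)$.
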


Here, $Y\subset Z$, $Y+Z$, and $\phi^*Y$ are all defined in terms of the associated ideal sheaves (see \cite{Sil87}). For a closed subscheme $Y$ and finite set of places of $S$ of $K$, we let $m_{Y,S}(P)=\sum_{v\in S}\lambda_{Y,v}(P)$. For Cartier divisors $D$ and $E$ on a variety $X$, we will also write $D\geq E$ (or $E\leq D$) if $D-E$ is an effective divisor.

\begin{remark}
\label{remzerodim}
If $Y$ is a zero-dimensional closed subscheme of a projective variety $X$ over a field $K$, with $\Supp Y=\{Q_1,\ldots, Q_r\}$, then we can write
\begin{align*}
Y=Y_{Q_1}+\cdots +Y_{Q_r},
\end{align*}
where $Y_{Q_i}$ is a closed subscheme supported only at $Q_i$ (see, e.g., \cite[Prop.~5.11]{UW}).  Suppose that $K$ is a number field and fix $v\in M_K$. Since $Y_{Q_i}\cap Y_{Q_j}=\emptyset$, $i\neq j$, it follows from Theorem \ref{thm: Silverman} \eqref{subint} that 
\begin{align*}
\min \{\lambda_{Y_{Q_i},v}, \lambda_{Y_{Q_j},v}\}=O(1). 
\end{align*}
Since we also have $\lambda_{Y,v}=\sum_{i=1}^r \lambda_{Y_{Q_i},v}+O(1)$, for any point $P\in X(K)$ there exists $j\in \{1,\ldots,r\}$ (depending on $P$ and $v$) such that
\begin{align*}
\lambda_{Y,v}(P)=\lambda_{Y_{Q_j},v}(P)+O(1),
\end{align*}
where the $O(1)$ is independent of $P$.
\end{remark}

\begin{definition}\label{defzerodim}
If $Y$ is a zero-dimensional closed subscheme of a projective variety $X$ over $K$ and $Q\in Y(\kbar)$, we let $Y_Q$ be the closed subscheme $(Y_{\kbar})_Q$ of Remark \ref{remzerodim} (applied to the closed subscheme $Y_{\kbar}$ of $X_{\kbar}$).
\end{definition}
\noindent When $Q\in Y(K)$, we also naturally identify $Y_Q$ as a closed subscheme over $K$.

We also recall the notion of a set of $(D,S)$-integral points on a projective variety $X$ (see \cite{Voj87} for more details).

\begin{definition}\label{def: integral}
Let $D$ be an effective Cartier divisor on a projective variety $X$, both defined over a number field $K$. Let $R$ be a subset of 
$X(K) \setminus \Supp D$, and let $S$ be a finite set of places of $K$ containing all the archimedean places. 
We say that $R$ is a set of $(D, S)$-integral points on $X$ if
\begin{align*}
\sum_{v\in S}\lambda_{D,v}(P)=h_D(P)+O(1)
\end{align*}
for all $P\in R$.
\end{definition}

More generally, this definition can be extended to an arbitrary closed subscheme $Y$ of $X$ by appropriately replacing $D$ with $Y$ everywhere.

\subsection{Multiplicities, Regular Sequences, and the Filtration Lemma}

In this section, we collect together some useful facts involving multiplicities and regular sequences, and state the Filtration Lemma which will play a basic role in many of the proofs.

Let $R$ be a Noetherian local ring and let $\mathfrak{q}$ be a parameter ideal for $R$. Let $L_{\mathfrak{q},R}(n)=\length (R/\mathfrak{q}^n)$.
\begin{theorem}[{\cite[Section 12.4]{Eis95}}]
For $n\gg 0$, $L_{\mathfrak{q},R}(n)$ agrees with a polynomial of degree $\dim R$ in $n$.
\end{theorem}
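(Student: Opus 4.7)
The approach is classical Hilbert–Samuel theory, carried out in three stages: analyze successive quotients via the associated graded ring, telescope to recover $L_{\mathfrak{q},R}(n)$ itself, and finally identify the degree with $\dim R$. The plan is to treat the length-polynomial result first, then tackle the dimension identification, which is the subtle part.

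First I would form the associated graded ring $\mathrm{gr}_\mathfrak{q}(R) = \bigoplus_{n \geq 0} \mathfrak{q}^n/\mathfrak{q}^{n+1}$. Since $\mathfrak{q}$ is a parameter ideal, it is $\mathfrak{m}$-primary and finitely generated, say by $s$ elements, so $R/\mathfrak{q}$ is Artinian of finite length and $\mathrm{gr}_\mathfrak{q}(R)$ is a finitely generated graded $R/\mathfrak{q}$-algebra generated in degree one. Viewing it as a quotient of the polynomial ring $(R/\mathfrak{q})[T_1,\ldots,T_s]$, Hilbert's theorem on finitely generated graded modules over a polynomial ring with Artinian coefficients gives that $\length(\mathfrak{q}^n/\mathfrak{q}^{n+1})$ agrees, for $n \gg 0$, with a polynomial of degree at most $s-1$. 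The short exact sequences $0 \to \mathfrak{q}^n/\mathfrak{q}^{n+1} \to R/\mathfrak{q}^{n+1} \to R/\mathfrak{q}^n \to 0$ yield the first-difference identity $L_{\mathfrak{q},R}(n+1) - L_{\mathfrak{q},R}(n) = \length(\mathfrak{q}^n/\mathfrak{q}^{n+1})$, and a standard antidifference calculation (the antidifference of a polynomial is a polynomial of one higher degree) then produces a polynomial of degree at most $s$ agreeing with $L_{\mathfrak{q},R}(n)$ for $n$ large.

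The main obstacle is identifying this degree as exactly $\dim R$, independently of the number of generators $s$; this is precisely the Dimension Theorem of commutative algebra. The upper bound $\deg L_{\mathfrak{q},R} \leq \dim R$ reduces, by sandwiching $\mathfrak{q}$ between powers of an ideal generated by a system of parameters (so that any two $\mathfrak{m}$-primary ideals yield Hilbert–Samuel polynomials of the same degree), to the case $s = \dim R$ handled above. For the reverse inequality, I would induct on $d := \deg L_{\mathfrak{q},R}$. The base case $d = 0$ forces $\length(R/\mathfrak{q}^n)$ to be eventually constant, hence $\mathfrak{q}^n = \mathfrak{q}^{n+1}$ for some $n$; Nakayama's lemma then gives $\mathfrak{q}^n = 0$, so $R$ is Artinian and $\dim R = 0$. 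For the inductive step, prime avoidance produces an element $x \in \mathfrak{q}$ lying outside every minimal prime of $R$ of maximal coheight; Krull's principal ideal theorem gives $\dim R/(x) \leq \dim R - 1$, while a direct comparison of the Hilbert–Samuel polynomials of $R$ and $R/(x)$ (using the exact sequence $0 \to R/(\mathfrak{q}^n : x) \to R/\mathfrak{q}^n \to R/(\mathfrak{q}^n + (x)) \to 0$ and the Artin–Rees lemma to control $\mathfrak{q}^n : x$) shows the degree drops by exactly one, closing the induction and pinning the degree down to $\dim R$.
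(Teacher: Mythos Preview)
The paper does not give its own proof of this statement; it is quoted as background with a citation to Eisenbud. Your sketch follows the standard Hilbert--Samuel route found there, and the first two stages (polynomial growth via the associated graded ring, then telescoping the successive quotients) are correct.

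There is, however, a direction error in your inductive step for the reverse inequality $\dim R \leq d(R)$. You assert that your choice of $x$ together with Krull's principal ideal theorem gives $\dim R/(x) \leq \dim R - 1$. Even granting this, combining it with the inductive hypothesis $\dim R/(x) \leq d - 1$ tells you nothing about $\dim R$: both inequalities bound $\dim R/(x)$ from above, and neither bounds $\dim R$. To close the induction you would need the \emph{opposite} inequality $\dim R \leq \dim R/(x) + 1$, and that does not follow from your choice of $x$ (nor is it what Krull's PIT says). The standard argument does not globally pass to $R/(x)$; instead one takes an arbitrary chain $\mathfrak{p}_0 \subsetneq \cdots \subsetneq \mathfrak{p}_r$, picks $x \in \mathfrak{p}_1 \setminus \mathfrak{p}_0$, passes to $R' = R/\mathfrak{p}_0$ where $\bar{x}$ is a non-zero-divisor, shows $d(R'/(\bar{x})) \leq d(R') - 1 \leq d - 1$ via the exact sequence and Artin--Rees, and then applies the inductive hypothesis to the image of the truncated chain in $R'/(\bar{x})$ to get $r - 1 \leq d - 1$. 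Your version conflates this with a different reduction that does not close.
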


The leading term of any polynomial of degree $d$ in $\mathbb{Q}[n]$ which takes only integer values for $n\gg 0$ is of the form  $cn^d/d!$ for some integer $c$. Then we have the following notions of multiplicity going back to Hilbert and Samuel.

\begin{definition}[\cite{Eis95}]
We define the \emph{multiplicity} of $\mathfrak{q}$ in $R$ to be the integer $e=e(\mathfrak{q},R)$ such that $|L_{\mathfrak{q},R}(n)-en^{\dim R}/(\dim R)!|=O(n^{\dim R-1}).$
\end{definition}

\begin{definition}[{\cite[Ex.~4.3.4]{Fulton}}]
Let $Z$ be a zero-dimensional closed subscheme of a projective variety $X$ with ideal sheaf $\mathcal{I}$ and let $P\in \Supp Z$. We define the \emph{multiplicity} of $Z$ at $P$ to be $e(\mathcal{I}_P, \cO_{X,P})$, and denote it by $\mu_P(Z)$ or simply $\mu_P$ if the context is clear.
\end{definition}

We also need some facts about certain ideal quotients.

\begin{definition}
Let $I$ and $J$ be ideals in $R$. The ideal quotient (or colon ideal) $(I:J)$  is defined by
\begin{align*}
(I:J)=\{r\in R\mid rJ\subset I\}.
\end{align*}
\end{definition}

\begin{lemma}
\label{ideallemma}
Let $I=(f_1,\ldots, f_m)$ be an ideal of a ring $R$ generated by a regular sequence $f_1,\ldots, f_m$ in $R$. Then for any positive integers $n\geq i>0$ and $j\in \{1,\ldots, m\}$, we have
\begin{align*}
(I^n:(f_j)^i)=I^{n-i}.
\end{align*}
\end{lemma}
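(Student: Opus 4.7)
The plan is to prove Lemma \ref{ideallemma} via a two-step reduction followed by an application of a standard structure theorem for the associated graded ring of a regular sequence. The easy inclusion $I^{n-i} \subseteq (I^n : (f_j)^i)$ is immediate, since $I^{n-i} \cdot (f_j)^i \subseteq I^{n-i} \cdot I^i \subseteq I^n$; all the content lies in the reverse inclusion.

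First I would reduce to the case $i = 1$ by induction on $i$. Assuming one has established the statement $(I^n : (f_j)) = I^{n-1}$ for every $n \geq 1$, then for $i \geq 2$ and $r f_j^i \in I^n$ with $n \geq i$, the relation $(r f_j^{i-1}) f_j \in I^n$ together with the $i = 1$ case (applied at exponent $n$) yields $r f_j^{i-1} \in I^{n-1}$; invoking the inductive hypothesis at $(n-1, i-1)$, which is valid because $n - 1 \geq i - 1 > 0$, then gives $r \in I^{(n-1)-(i-1)} = I^{n-i}$. The remaining task is thus to prove $(I^n : (f_j)) \subseteq I^{n-1}$, and I would do this by induction on $n$, the case $n = 1$ being vacuous since $I^0 = R$. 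For the inductive step with $n \geq 2$, suppose $r f_j \in I^n \subseteq I^{n-1}$; the inductive hypothesis (or the trivial initial case) immediately gives $r \in I^{n-2}$, and what remains is to upgrade this to $r \in I^{n-1}$.

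The key input is the classical theorem of Rees (see, e.g., \cite[Exercise 17.16]{Eis95}) that for a regular sequence $f_1, \ldots, f_m$ in $R$, the natural map
\begin{equation*}
    (R/I)[x_1, \ldots, x_m] \longrightarrow \mathrm{gr}_I(R) = \bigoplus_{k \geq 0} I^k / I^{k+1}, \qquad x_j \mapsto \overline{f_j} \in I/I^2,
\end{equation*}
is an isomorphism of graded $R/I$-algebras. Applying this, let $\bar r \in I^{n-2}/I^{n-1}$ denote the class of $r$; the hypothesis $r f_j \in I^n$ translates into $\bar r \cdot \overline{f_j} = 0$ in $I^{n-1}/I^n$. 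Under the isomorphism above, $\overline{f_j}$ corresponds to the indeterminate $x_j$, which is a non-zerodivisor in the polynomial ring, forcing $\bar r = 0$, i.e., $r \in I^{n-1}$. This closes the induction on $n$, and hence the induction on $i$, completing the proof. I expect the only real obstacle to be invoking the associated-graded structure theorem correctly; the two nested inductions are essentially bookkeeping. An alternative route would be to exploit the exactness of the Koszul complex $K_\bullet(f_1, \ldots, f_m)$ in positive degrees, which similarly controls syzygies among powers of a regular sequence, but the associated-graded approach is cleaner for the precise statement at hand.
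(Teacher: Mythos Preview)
Your proof is correct and rests on the same key ingredient as the paper's---the Rees isomorphism $\mathrm{gr}_I(R)\cong (R/I)[x_1,\ldots,x_m]$, which makes the image of $f_j$ a non-zerodivisor in the associated graded ring. The paper's argument is slightly more direct: instead of the double induction, it simply chooses the largest $\ell$ with $a\in I^\ell$ and observes in one stroke that $\overline{af_j^i}\neq 0$ in $I^{\ell+i}/I^{\ell+i+1}$, forcing $\ell+i\geq n$.
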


\begin{proof}
Since $f_j^i\in I^i$, $I^{n-i}\subset (I^n:(f_j)^i)$ is clear. 

For the other inclusion, suppose that $af_j^i\in I^n$. Consider the associated graded ring $G_I(R) := \bigoplus_{r \ge 0} I^r / I^{r+1}$. By \cite[Theorem 27]{Matsumura}, $G_I(R)$ is isomorphic to the polynomial ring $(R/I)[x_1,\ldots, x_m]$, where the indeterminates $x_1,\ldots, x_m$ correspond to the images of $f_1,\ldots, f_m$ in the first graded piece of $G_I(R)$.

Suppose $a \in I^\ell \setminus I^{\ell+1}$, so that $0\neq \bar{a} \in I^\ell / I^{\ell+1}$. Since $x_j$ is not a zero divisor in $(R/I)[x_1,\ldots, x_m]$, we have $0 \neq \overline{a f_j^i} \in I^{\ell + i} / I^{\ell + i + 1}$.  Therefore $af_j^i \notin  I^{\ell + i + 1}$ and it follows that $\ell + i + 1 \ge n+1$. 
Hence $\ell \ge n - i$. 
\end{proof}

We now make definitions involving proper intersections, general position, and regular sequences, in the context of closed subschemes. 

\begin{definition}
\label{properdef}
Let $D_1,\ldots, D_r$ be effective Cartier divisors on a projective variety $X$. We say that $D_1,\ldots, D_r$ intersect properly if for every nonempty subset $I\subset \{1,\ldots, r\}$ and every point $P\in \Supp \cap_{i\in I}D_i$, the sequence $(f_i)_{i\in I}$ is a regular sequence in the local ring $\cO_{X,P}$, where $f_i$ locally defines $D_i$ at $P$, $i=1,\ldots, r$. More generally, the definition can be extended to closed subschemes $Y_1,\ldots, Y_r$ of $X$ (see \cite{RW22} or \cite{Voj23}).
\end{definition}

\begin{remark}
If $D_1$ and $D_2$ are Cartier divisors on $X$ that intersect properly, then $\cO_X(-D_1-D_2)= \cO_X(-D_1)\cap \cO_X(-D_2)$ (see \cite[\S 3, \S 4]{Voj23} for a vast abstraction of this property).
\end{remark}

\begin{definition}
\label{generalpositiondef}
Let $X$ be a projective variety of dimension $n$. We say that closed subschemes $Y_1,\ldots, Y_q$ of $X$ are in general position if for every subset $I \subset \{1,\ldots, q\}$ with $|I| \leq n+1$ we have $\emph{codim} \cap_{i \in I}Y_i \geq |I|$, where we use the convention that $\dim \emptyset = -1$.
\end{definition}

\begin{remark}
\label{remgenproper}
If the Cartier divisors $D_1,\ldots, D_r$ on $X$ intersect properly, then they are in general position. If $X$ is Cohen-Macaulay then the converse holds \cite[Theorem 17.4]{Matsumura}. In particular, if $X\subset \mathbb{P}^n$ is a hypersurface, $D_1,\ldots, D_r$ are effective divisors on $\mathbb{P}^n$, and $X, D_1,\ldots, D_r$ are in general position (on $\mathbb{P}^n$), then $X, D_1,\ldots, D_r$ intersect properly, and from the definitions it follows that $D_1|_X,\ldots, D_r|_X$ intersect properly, as divisors on $X$.
\end{remark}

We make a related definition for a sequence of nested closed subschemes.

\begin{definition}
\label{regulardef}
Let $Y_1\supset Y_2\supset \cdots\supset Y_r$ be closed subschemes of a projective variety $X$. We say that $Y_1\supset\cdots\supset Y_r$ is a regular chain of closed subschemes of $X$ if for every $i\in \{1,\ldots, r\}$ and every point $P\in \Supp Y_i$, there is a regular sequence $f_1,\ldots, f_i\in \cO_{X,P}$ such that for $1\leq j\leq i$, the ideal sheaf of $Y_j$ is locally given by the ideal $(f_1,\ldots, f_j)$ in $\cO_{X,P}$.
\end{definition}

\begin{remark}
If $D_1,\ldots, D_r$ intersect properly on $X$, then $D_1\supset D_1\cap D_2\supset\cdots\supset D_1\cap D_2\cap\cdots \cap D_r$ is a regular chain of closed subschemes of $X$.
\end{remark}

We will use the following consequence of Lemma \ref{ideallemma}.

\begin{definition}
For an effective Cartier divisor $D$ and closed subscheme $Y$ of $X$, let $n_Y(D)$ be the largest nonnegative integer such that $n_Y(D)Y\subset D$, as closed subschemes.
\end{definition}

\begin{lemma}
\label{orderlemma}
Let $D,D'$ be effective Cartier divisors on a surface $X$ and let $Y$ be a closed subscheme supported at a point $P$.  Suppose that $mD\supset Y$ is a regular chain of closed subschemes for some positive integer $m$.  Then for any positive integer $i$,
\begin{align}
\label{orderlemmaineq}
|n_Y(iD)+n_Y(D')-n_Y(iD+D')|\leq 1,
\end{align}
and 
\begin{align*}
n_Y(iD)=\lfloor i/m\rfloor.
\end{align*}

\end{lemma}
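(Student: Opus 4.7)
The plan is to work locally in the ring $R = \cO_{X,P}$. The regular chain hypothesis supplies a regular sequence $f_1, f_2 \in R$ with $\mathcal{I}_{mD} = (f_1)$ and $\mathcal{I}_Y = (f_1, f_2) =: I$. Since $mD$ is Cartier with local equation $g^m$ (for $g$ a local equation of $D$), we have $f_1 = u g^m$ for some unit $u \in R$, and absorbing $u$ we may assume $f_1 = g^m$. A standard manipulation in the local ring shows that $g, f_2$ is itself a regular sequence, which then easily yields $g^r \notin I$ for every $0 \leq r < m$: otherwise, reducing modulo $f_2$ would give $1 \in (\bar g)$ in the local ring $R/(f_2)$. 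The identity $n_Y(iD) = \lfloor i/m\rfloor$ then follows quickly: writing $i = km + r$ with $0 \leq r < m$, we have $g^i = f_1^k g^r \in I^k$, and Lemma \ref{ideallemma} applied to the regular sequence $f_1, f_2$ generating $I$ gives $(I^{k+1} : f_1^k) = I$, so $g^i \in I^{k+1}$ if and only if $g^r \in I$, which has just been ruled out.

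For the inequality \eqref{orderlemmaineq}, let $h$ be a local equation for $D'$ at $P$ and set $n_1 = k = n_Y(iD)$ and $n_2 = n_Y(D')$. The lower bound $n_Y(iD+D') \geq n_1 + n_2$ is immediate from $g^i h \in I^{n_1} I^{n_2} \subseteq I^{n_1+n_2}$. For the upper bound $n_Y(iD+D') \leq n_1 + n_2 + 1$, I factor $g^i h = f_1^k(g^r h)$ and apply Lemma \ref{ideallemma} once more to obtain $(I^{n_1+n_2+2} : f_1^k) = I^{n_2+2}$, which reduces the task to showing $g^r h \notin I^{n_2+2}$.

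The reduced claim $g^r h \notin I^{n_2+2}$ is where I expect the real difficulty, since $\bar g^r$ is typically a zero-divisor in $R/I$ when $r > 0$. The plan is to pass to the associated graded ring $G_I(R) \cong (R/I)[X,Y]$, where $X, Y$ correspond to the images of $g^m, f_2$ in the first graded piece (the isomorphism from \cite[Theorem 27]{Matsumura} already used in Lemma \ref{ideallemma}). A direct calculation, using that $g, f_2$ is a regular sequence, yields $\mathrm{Ann}_{R/I}(\bar g^r) = (\bar g^{m-r})$. Assume toward contradiction that $g^r h \in I^{n_2+2}$. The weaker condition $g^r h \in I^{n_2+1}$ already forces the leading form $\bar h_{n_2}$ of $h$ in degree $n_2$ to have every coefficient in $(\bar g^{m-r})$, so we may write $\bar h_{n_2} = \bar g^{m-r} \bar D$ for some homogeneous $\bar D$ of degree $n_2$; lifting produces $h = g^{m-r} D + h_1$ with $h_1 \in I^{n_2+1}$. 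Then $g^r h = f_1 D + g^r h_1$ has leading form $X\bar D + \bar g^r \bar h_1$ in degree $n_2+1$, and the strengthened condition $g^r h \in I^{n_2+2}$ forces this form to vanish. Comparing coefficients of the monomials $X^{a+1} Y^b$ shows that every coefficient of $\bar D$ lies in $(\bar g^r)$; substituting back into $\bar h_{n_2} = \bar g^{m-r} \bar D$, each coefficient becomes a multiple of $\bar g^m$, which vanishes in $R/I$ since $g^m \in I$. Hence $\bar h_{n_2} = 0$, contradicting $h \notin I^{n_2+1}$. The crucial point is that $\bar g^r \bar h_{n_2} = 0$ alone would not give $\bar h_{n_2} = 0$; the extra order of vanishing in $g^r h \in I^{n_2+2}$ is precisely what propagates the factorization of the leading form through to a contradiction.
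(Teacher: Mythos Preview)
Your proof is correct, but the paper's argument for the inequality \eqref{orderlemmaineq} is considerably shorter. Where you fix $n_1 = n_Y(iD)$ and $n_2 = n_Y(D')$ and work to bound $n_Y(iD+D')$ from above---reducing via Lemma~\ref{ideallemma} to $g^r h \notin I^{n_2+2}$ and then carrying out a two-step analysis in the associated graded ring $G_I(R)\cong (R/I)[X,Y]$---the paper reverses the roles: it sets $n := n_Y(iD+D')$, so that $f^i f' \in \mathcal{I}_P^n$, and simply multiplies by the harmless factor $f^{m\lceil i/m\rceil - i}$ to obtain $(f^m)^{\lceil i/m\rceil} f' \in \mathcal{I}_P^n$. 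A single application of Lemma~\ref{ideallemma} then gives $f' \in \mathcal{I}_P^{\,n - \lceil i/m\rceil}$, i.e.\ $n_Y(D') \geq n - \lceil i/m\rceil$; together with the easy bound $n_Y(D') \leq n - \lfloor i/m\rfloor$ and the identity $n_Y(iD) = \lfloor i/m\rfloor$, the inequality follows immediately. The ``round up the exponent'' trick is what lets the paper sidestep the graded-ring computation entirely. Your route, on the other hand, makes the mechanism transparent: the annihilator calculation $\mathrm{Ann}_{R/I}(\bar g^r) = (\bar g^{m-r})$ is a pleasant byproduct and explains concretely why the defect is at most~$1$.
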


\begin{proof}
Let $\mathcal{I}$ be the ideal sheaf of $Y$ and let $\mathcal{I}_P\subset \cO_{P}$ be the localization. Let $n=n_Y(iD+D')$. Suppose that $D$ and $D'$ are represented by $f$ and $f'$ locally at $P$. Then since $nY\subset iD+D'$, we have $f^if'\in \mathcal{I}_P^n$. It follows that $(f^m)^{\lceil i/m\rceil}f'\in \mathcal{I}_P^n$ and, since $mD\supset Y$ is a regular chain of closed subschemes, by Lemma \ref{ideallemma}, $f'\in \mathcal{I}_P^{n-\lceil i/m\rceil}$. On the other hand, since $f^i=(f^{m})^{\lfloor i/m\rfloor}f^{i-m\lfloor i/m\rfloor}\in \mathcal{I}_P^{\lfloor i/m\rfloor}$, if $f'\in \mathcal{I}_P^{n-\lfloor i/m\rfloor+1}$ then  $f^if'\in \mathcal{I}_P^{n+1}$, contradicting the definition of $n$. It follows that 
\begin{align*}
n-\lceil i/m\rceil\leq n_Y(D')\leq n-\lfloor i/m\rfloor. 
\end{align*}
The inequality \eqref{orderlemmaineq} (and the remainder of the lemma) now follows if we show that $n_Y(iD)=\lfloor i/m\rfloor$. If $i$ divides $m$ and $iD=\frac{i}{m}(mD)$ then it follows from Lemma \ref{ideallemma} that $n_Y(iD)=i/m$.  Since $\lfloor i/m\rfloor (mD)\subset iD\subset (\lfloor i/m\rfloor+1) (mD)$, we have $\lfloor i/m\rfloor\leq n_Y(iD)\leq \lfloor i/m\rfloor+1$. But if $n_Y(iD)=\lfloor i/m\rfloor+1$, then 
\begin{align*}
n_Y(miD)\geq mn_Y(iD)>i,
\end{align*}
contradicting $n_Y(miD)=i$. Therefore we must have $n_Y(iD)= \lfloor i/m\rfloor$ as claimed.
\end{proof}

Finally, we recall the elementary, but very useful, Filtration Lemma whose utility in Diophantine approximation was recognized and introduced by Corvaja and Zannier \cite[Lemma~3.2]{CZ04}.

\begin{lemma}[Filtration Lemma]\label{filtrationlemma}
Let $V$ be a vector space of finite dimension $d$ over a field $K$. Let $V=W_1 \supset W_2 \supset \cdots \supset W_h$ and $V=W_1^* \supset W_2^* \supset \cdots \supset W_{h^*}^*$ be two filtrations on $V$. There exists a basis $v_1, \ldots, v_d$ of $V$ that contains a basis of each $W_j$ and $W_{j}^*$.
\end{lemma}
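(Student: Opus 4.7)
The plan is to build the desired basis by passing to a common refinement of the two given filtrations, constructed from the pairwise intersections. For each pair of indices $(j,k)$ with $1 \leq j \leq h$ and $1 \leq k \leq h^*$, set $U_{j,k} = W_j \cap W_k^*$. Since the two given filtrations are nested, this doubly-indexed family is partially ordered by inclusion: $U_{j,k} \supseteq U_{j',k'}$ whenever $j \leq j'$ and $k \leq k'$. Crucially, because $W_1 = V = W_1^*$, we have $U_{j,1} = W_j$ and $U_{1,k} = W_k^*$, so every term of each of the two original filtrations appears somewhere in the family $\{U_{j,k}\}$.

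Next, I would fix any total order on the finite index set $\{1,\ldots,h\} \times \{1,\ldots,h^*\}$ that refines the product partial order above, for instance the lexicographic order. Listing the $U_{j,k}$ in this order yields a single descending chain
\[
V = U_{(1)} \supseteq U_{(2)} \supseteq \cdots \supseteq U_{(N)},
\]
where $N = h\cdot h^*$.

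I would then construct the basis inductively from the smallest subspace outward. Start with any basis $B_N$ of $U_{(N)}$. Given a basis $B_{\ell+1}$ of $U_{(\ell+1)}$, extend it (using that $U_{(\ell+1)} \subseteq U_{(\ell)}$) to a basis $B_\ell$ of $U_{(\ell)}$ by adjoining finitely many vectors chosen from $U_{(\ell)}$. Terminating at $\ell = 1$ produces a basis $B_1 = \{v_1,\ldots,v_d\}$ of $V$ with the key feature that for every $\ell$, a subset of $B_1$ is a basis of $U_{(\ell)}$. Specializing to the pairs $(j,1)$ and $(1,k)$, this gives a subset of $\{v_1,\ldots,v_d\}$ forming a basis of each $W_j$ and each $W_k^*$, as required.

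The only step requiring a brief check is the existence of a linear extension of the product partial order on a finite poset, which is a standard and elementary fact; beyond that, the argument relies solely on iterated basis extension. Consequently, there is no serious obstacle, and the proof is purely formal linear algebra.
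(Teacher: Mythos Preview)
Your intuition to work with the doubly-indexed family $U_{j,k}=W_j\cap W_k^*$ is exactly the right one, but the argument as written has a real gap: a linear extension of the product partial order on $\{1,\dots,h\}\times\{1,\dots,h^*\}$ does \emph{not} in general produce a descending chain of subspaces. Incomparable index pairs can label incomparable subspaces. Concretely, take $V=K^2$ with $W_2=\langle e_1\rangle$ and $W_2^*=\langle e_2\rangle$; in lexicographic order the list reads $V,\ \langle e_2\rangle,\ \langle e_1\rangle,\ 0$, and at the step where you try to extend your basis $\{e_1\}$ of $U_{(3)}=\langle e_1\rangle$ to a basis of $U_{(2)}=\langle e_2\rangle$ the construction breaks down, since $e_1\notin\langle e_2\rangle$. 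So the sentence ``Listing the $U_{j,k}$ in this order yields a single descending chain'' is simply false, and the inductive extension that follows it is not well defined.

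The standard fix keeps your family $U_{j,k}$ but replaces the chain-extension step by a direct-sum decomposition: for each $(j,k)$ choose a complement $C_{j,k}$ of $U_{j+1,k}+U_{j,k+1}$ inside $U_{j,k}$ (with the convention $W_{h+1}=W^*_{h^*+1}=0$). A dimension count via the modular law then yields $W_j=\bigoplus_{j'\ge j,\ k}C_{j',k}$ and $W_k^*=\bigoplus_{j,\ k'\ge k}C_{j,k'}$, whence $V=\bigoplus_{j,k}C_{j,k}$; concatenating bases of the $C_{j,k}$ gives the adapted basis. (The paper itself does not prove the lemma---it merely quotes it from Corvaja--Zannier---so there is no in-text proof to compare against beyond this classical argument.)
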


In the situation of Lemma \ref{filtrationlemma}, we say that the basis $v_1,\ldots, v_d$ of $V$ is adapted to the two filtrations.

\section{Beta constant estimates}
\label{sectionbeta}

Throughout this section we work over a field $K$ of characteristic $0$. For a line bundle $\mathscr{L}$ on a projective variety $X$ over $K$, we will write $h^0(X,\mathscr{L})$ (or simply $h^0(\mathscr{L})$) for $\dim_K H^0(X,\mathscr{L})$, and if $D$ is a Cartier divisor on $X$, we write $h^0(D)$ for $h^0(\CO(D))$. We first recall the definition of the beta constant.
\begin{definition}
\label{betadef}
Let $X$ be a projective variety over a field $K$. Let $\mathscr{L}$ be a big line bundle on $X$ and let $Y$ be a closed subscheme of $X$ with associated sheaf of ideals $\mathcal{I}$. Then
\begin{align*}
    \beta(\mathscr{L},Y)=\liminf_{N \to \infty}\frac{\sum_{m=1}^{\infty}h^0(X,\mathscr{L}^N\otimes \mathcal{I}^m)}{Nh^0(X,\mathscr{L}^N)},
\end{align*}
where $\mathscr{L}^N$ denotes the tensor product of $N$ copies of $\mathscr{L}$.
\end{definition}
By a result of Vojta \cite{Voj20} (assuming as we do that $K$ has characteristic $0$) the $\liminf$ in the definition can be replaced by a limit. By abuse of notation, if $D$ is a big Cartier divisor on $X$, we also write $\beta(D,Y)$ for $\beta(\mathcal{O}(D),Y)$.

It follows easily that for any positive integer $n$,
\begin{align*}
\beta(nD,Y)&=n\beta(D,Y),\\
\beta(D,nY)&=\frac{1}{n}\beta(D,Y).
\end{align*}

In particular, we can use these properties to canonically extend the definition of $\beta(D,E)$ to $\mathbb{Q}$-divisors $D$ and $E$ (with $D$ big). We also show that $\beta(D,E)$ depends only on the numerical equivalence class of $D$ and $E$.
\begin{lemma}
Let $D,D',E,E'$ be nonzero effective Cartier divisors on a projective variety $X$ over $K$ with $D$ and $D'$ big. Suppose that $D\equiv D'$ and $E\equiv E'$. Then
\begin{align*}
\beta(D,E)=\beta(D',E').
\end{align*}
\end{lemma}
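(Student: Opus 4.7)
The plan is to derive an integral formula for $\beta(D, E)$ in terms of the volume function, and then invoke the fact that the volume function is a numerical invariant on $N^1(X)_{\mathbb{R}}$.

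Set $n = \dim X$. Since $E$ is an effective Cartier divisor with ideal sheaf $\mathcal{I} = \mathcal{O}_X(-E)$, we have $\mathcal{I}^m = \mathcal{O}_X(-mE)$, so the sum in Definition \ref{betadef} rewrites as $\sum_{m \geq 1} h^0(X, \mathcal{O}(ND - mE))$. Using the defining asymptotic $h^0(X, \mathcal{O}(NA)) = N^n \vol(A)/n! + o(N^n)$ for a big divisor $A$ (with $h^0$ vanishing outside the pseudo-effective cone), one recognizes the sum over $m$ as a Riemann sum in the variable $t = m/N$, yielding
\[
\sum_{m=1}^{\infty} h^0(X, \mathcal{O}(ND - mE)) = \frac{N^{n+1}}{n!} \int_0^\infty \vol(D - tE)\, dt + o(N^{n+1}).
\]
Since $N h^0(X, \mathcal{O}(ND)) \sim N^{n+1} \vol(D)/n!$, dividing yields
\[
\beta(D, E) = \frac{1}{\vol(D)} \int_0^\infty \vol(D - tE)\, dt,
\]
and the identical derivation applies to $\beta(D', E')$.

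The conclusion is then immediate: the volume function on divisors is a numerical invariant, depending only on the class in $N^1(X)_{\mathbb{R}}$ (a standard fact; see, for example, Proposition 2.2.41 of Lazarsfeld's \emph{Positivity in Algebraic Geometry I}). Since $D \equiv D'$ and $E \equiv E'$ imply $D - tE \equiv D' - tE'$ for every $t \geq 0$, we have $\vol(D - tE) = \vol(D' - tE')$ and $\vol(D) = \vol(D')$, so the two integral expressions coincide and $\beta(D, E) = \beta(D', E')$.

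The main technical step requiring care is the Riemann-sum passage, which demands uniform control of the error $h^0(X, \mathcal{O}(ND - mE)) - \vol(ND - mE)/n!$ as $m$ varies. This can be handled by exploiting the continuity and concavity of $\vol$ on the big cone together with truncation at the pseudo-effective threshold of $D - tE$ (so that the integrand has compact support in $t$); alternatively one can cite directly the limit-existence result of Vojta in \cite{Voj20}, which is precisely the calculation that justifies replacing $\liminf$ by $\lim$ in Definition \ref{betadef}.
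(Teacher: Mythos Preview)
Your approach is correct and takes a genuinely different route from the paper. You reduce the statement to the integral formula
\[
\beta(D,E)=\frac{1}{\vol(D)}\int_0^\infty \vol(D-tE)\,dt
\]
and then invoke the numerical invariance of $\vol$ (Lazarsfeld, \emph{Positivity I}, Prop.~2.2.41). The paper instead argues directly from the definition via an $\varepsilon$-perturbation: using that $E-E'+\varepsilon D$ is big, it compares $\beta(D,E')$ with $\beta(D,E+\varepsilon D)$, bounds the latter below by $(1-c\varepsilon)^{n+1}\beta(D,E)$ through explicit manipulation of the truncated sums, and lets $\varepsilon\to 0$. Your argument is more conceptual and handles the $D$-variable and $E$-variable symmetrically in one stroke, at the cost of importing the integral formula (which, as you note, requires uniform control in the Riemann-sum passage; this is most cleanly obtained from the Okounkov-body description of Lazarsfeld--Musta\c{t}\u{a}, rather than from Vojta's limit-existence result, which addresses general closed subschemes and does not by itself yield the integral identity). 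The paper's argument is more self-contained, needing only the definition of $\vol$ and its homogeneity. One small correction: $\vol$ is not concave on the big cone (it is $\vol^{1/n}$ that is concave), though this does not affect the validity of your outline.
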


\begin{proof}
We prove that $\beta(D,E)=\beta(D,E')$ (the proof that $\beta(D,E)=\beta(D',E)$ being similar). Let $n=\dim X$. Since $E\equiv E'$ and $D$ is big, for any positive rational $\varepsilon$, the $\mathbb{Q}$-divisor $E-E'+\varepsilon D$ is big. It follows that for a sufficiently large positive integer $N$, $h^0(N(E-E'+\varepsilon D))>0$, and so $E+\varepsilon D\sim E'+F$ for some $\mathbb{Q}$-divisor $F$. Therefore,
\begin{align*}
\beta(D,E')\geq \beta(D,E'+F)=\beta(D,E+\varepsilon D). 
\end{align*}
Fix an ample divisor $A$ on $X$ and let $c>\frac{A^{n-1}.D}{A^{n-1}.E}$ be a positive integer. Then $(ND-NmE).A^{n-1}<0$ for all $m\geq c$, and so $h^0(ND-NmE)=0$ for all $m\geq c$.  Let $M$ be a positive integer such that $M(E+\varepsilon D)$ is an integral Cartier divisor and let $N$ be a positive integer such that $M|N$.
Then
\begin{align*}
\sum_{m=1}^\infty h^0(ND-mM(E+\varepsilon D))&=\sum_{m=1}^{cN/M} h^0(ND-mM(E+\varepsilon D))\\
&\geq \sum_{m=1}^{cN/M} h^0((N-c\varepsilon N)D-mME)\\
&=\sum_{m=1}^{\infty} h^0(N(1-c\varepsilon)D-mME),
\end{align*}
and so
\begin{align*}
\frac{\sum_{m=1}^\infty h^0(ND-mM(E+\varepsilon D))}{Nh^0(ND)}&\geq \frac{\sum_{m=1}^{\infty} h^0(N(1-c\varepsilon)D-mME)}{Nh^0(N(1-c\varepsilon)D)}\frac{h^0(N(1-c\varepsilon)D)}{h^0(ND)}.
\end{align*}
Taking limits (see Definition \ref{voldef} and Remark \ref{volrem}) gives
\begin{align*}
\beta(D,E+\varepsilon D)=M\beta(D,M(E+\varepsilon D))&\geq M\beta((1-c\varepsilon)D,ME)\frac{\vol((1-c\varepsilon)D)}{\vol(D)}\\
&\geq (1-c\varepsilon)^{n+1}\beta(D,E).
\end{align*}
Thus,
\begin{align*}
\beta(D,E')\geq (1-c\varepsilon)^{n+1}\beta(D,E). 
\end{align*}
Since $\varepsilon>0$ was arbitrary, we find that $\beta(D,E')\geq \beta(D,E)$. Interchanging the roles of $E$ and $E'$ yields that $\beta(D,E)=\beta(D,E')$ 
\end{proof}

We now prove some useful inequalities involving the beta constant.

\begin{lemma}
\label{betalcm}
Let $Y$ be a closed subscheme of a projective variety $X$. Let $D,E$ be effective Cartier divisors on $X$ intersecting properly and suppose that $mY\subset D$ and $nY\subset E$ (as closed subschemes). Let $A$ be a big Cartier divisor on $X$. Then
\begin{align*}
\beta(A,Y)\geq m\beta(A,D)+n\beta(A,E).
\end{align*}
In particular,
\begin{align*}
\beta(A,D\cap E)\geq \beta(A,D)+\beta(A,E).
\end{align*}
\end{lemma}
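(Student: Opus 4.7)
The plan is to bound $\sum_{k \geq 1} h^0(X, A^N \otimes \cI_Y^k)$ from below in terms of the corresponding sums associated to $D$ and $E$, then pass to the limit as $N\to\infty$.

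First I would translate the hypotheses $mY \subset D$ and $nY \subset E$ into the ideal-theoretic containments $\cO_X(-D) \subset \cI_Y^m$ and $\cO_X(-E) \subset \cI_Y^n$. Since $D$ and $E$ intersect properly, so do $aD$ and $bE$ for any positive integers $a,b$ (powers of a regular sequence form a regular sequence), so the remark following Definition \ref{properdef} gives $\cO_X(-aD-bE) = \cO_X(-aD) \cap \cO_X(-bE)$. Combined, these produce $\cO_X(-aD-bE) \subset \cI_Y^{ma}\cdot\cI_Y^{nb} \subset \cI_Y^{ma+nb}$ for all $a,b\ge 0$, so vanishing of order $(a,b)$ along $(D,E)$ forces vanishing of order at least $ma+nb$ along $Y$.

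Next, I would apply the Filtration Lemma \ref{filtrationlemma} to the two (finite) filtrations $V_a := H^0(X, A^N-aD)$ and $W_b := H^0(X, A^N-bE)$ of $V := H^0(X, A^N)$ to obtain a basis $s_1, \ldots, s_d$ of $V$ adapted to both. For each $s_i$, let $a_i = \ord_D(s_i)$ and $b_i = \ord_E(s_i)$. The crucial step is that $s_i \in V_{a_i}\cap W_{b_i}$ which, by the proper intersection identity above, equals $H^0(X, A^N-a_iD-b_iE)$; hence $s_i \in H^0(X, A^N\otimes\cI_Y^{ma_i+nb_i})$.

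Counting in two ways, adaptedness yields $\sum_{a\ge 1}\dim V_a = \sum_i a_i$ and $\sum_{b\ge 1}\dim W_b = \sum_i b_i$; for the $Y$-filtration (to which the basis need not be adapted), linear independence of the $s_i$ still gives $\sum_{k\ge 1} h^0(X, A^N\otimes\cI_Y^k) \ge \sum_i \ord_Y(s_i) \ge m\sum_i a_i + n\sum_i b_i$. Dividing by $Nh^0(X, A^N)$ and letting $N\to\infty$ produces $\beta(A,Y) \ge m\beta(A,D) + n\beta(A,E)$. The ``in particular'' clause then follows by taking $Y = D\cap E$ with $m=n=1$, since $\cI_{D\cap E} = \cI_D+\cI_E$ contains both $\cI_D$ and $\cI_E$. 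The main technical point to verify is the proper intersection identity $\cO_X(-aD)\cap\cO_X(-bE) = \cO_X(-aD-bE)$ for arbitrary positive integers $a,b$; without it, a section vanishing to orders $(a_i, b_i)$ along $(D, E)$ might vanish only componentwise rather than along the sum $a_iD+b_iE$, destroying the additivity that drives the inequality.
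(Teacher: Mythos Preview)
Your proposal is correct and follows essentially the same approach as the paper's proof: both apply the Filtration Lemma to the two filtrations of $H^0(X,\cO(NA))$ by order of vanishing along $D$ and $E$, use the proper intersection identity $\cO_X(-aD)\cap\cO_X(-bE)=\cO_X(-aD-bE)$ (via the fact that powers of a regular sequence remain regular) to conclude that a section vanishing to orders $(a_i,b_i)$ along $(D,E)$ lies in $H^0(X,\cO(NA)\otimes\cI_Y^{ma_i+nb_i})$, and then divide by $Nh^0(NA)$ and pass to the limit. Your identification of the key technical point---that proper intersection is precisely what lets you pass from $V_{a_i}\cap W_{b_i}$ to $H^0(X,A^N-a_iD-b_iE)$---matches the paper's emphasis exactly.
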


\begin{proof}
We first note that if $(x_1,\ldots, x_n)$ is a regular sequence, then for any positive integers $i_1,\ldots, i_n$, $(x_1^{i_1},\ldots, x_n^{i_n})$ is again a regular sequence \cite[Theorem 26]{Matsumura}.  This implies that if $i$ and $j$ are positive integers, then $iD$ and $iE$ intersect properly and $\cO_X(-iD)\cap \cO_X(-jE)= \cO_X(-iD-jE)$. 

Let $N>0$. We consider the two filtrations on $H^0(X,\cO(NA))$ given by order of vanishing along $D$ and $E$, respectively:
\begin{align*}
H^0(X,\cO(NA))\supset H^0(X,\cO(NA-D))\supset  H^0(X,\cO(NA-2D))\supset \cdots\\
H^0(X,\cO(NA))\supset H^0(X,\cO(NA-E))\supset H^0(X,\cO(NA-2E))\supset \cdots.
\end{align*}

Let $ s_1,\dots, s_{h^0(NA)}$ be a basis of $H^0(X,\cO(NA))$ that is adapted to both filtrations. Let $\mathcal{I}$ denote the sheaf of ideals associated to $Y$.

For a nonzero section $s\in H^0(X,\cO(NA))$, let $\mu_D(s), \mu_E(s)$, and $\mu_Y(s)$, respectively, be the largest nonnegative integer $\mu$ such that $s\in H^0(X,\cO(NA-\mu D)), s\in H^0(X,\cO(NA-\mu E))$, and $s\in H^0(X,\cO(NA)\otimes \mathcal{I}^{\mu})$, respectively. Since  $s_1,\dots, s_{h^0(NA)}$ is adapted to the filtrations above, one easily finds
\begin{align*}
\sum_{j=1}^{\infty} h^0(\cO(NA-jD))&=\sum_{l=1}^{h^0(NA)} \mu_D(s_l),\\
\sum_{j=1}^{\infty} h^0(\cO(NA-jE))&=\sum_{l=1}^{h^0(NA)} \mu_E(s_l).
\end{align*}
We also have the inequality
\begin{align*}
\sum_{j=1}^{\infty}h^0(\cO(NA)\otimes \mathcal{I}^j)\geq \sum_{l=1}^{h^0(NA)}\mu_Y(s_l).
\end{align*}
 Since $D$ and $E$ intersect properly, if $s_l\in H^0(X,\cO(NA-iD))$ and $s_l\in H^0(X,\cO(NA-jE))$, then $s_l\in H^0(X,\cO(NA-iD-jE))\subset H^0(X,\cO(NA)\otimes \mathcal{I}^{im+jn})$. Therefore, $\mu_Y(s_l)\geq m\mu_D(s_l)+n\mu_E(s_l)$. It follows that 
\begin{align*}
\sum_{j=1}^{\infty}h^0(\cO(NA)\otimes \mathcal{I}^j)\geq m\sum_{i=1}^\infty h^0(\cO(NA-iD))+n\sum_{j=1}^\infty h^0(\cO(NA-jE)).
\end{align*}
Dividing by $Nh^0(\cO(NA))$ and taking a limit then gives the result.
\end{proof}

Before stating and proving the next inequality, we recall the definition of the Seshadri constant.

\begin{definition}
\label{Seshadridef}
    Let $X$ be a projective variety over a field $K$. Let $Y$ be a closed subscheme of $X$ and $\pi : \tilde{X} \to X$ be the blowing-up along $Y$. Let $A$ be a nef Cartier divisor on $X$. Define the Seshadri constant $\epsilon(A,Y)$ of $Y$ with respect to $A$ to be the real number 
    \begin{align*}
        \epsilon(A,Y)=\sup \{\gamma \in \bQ^{\geq 0}\ |\ \pi^*A-\gamma E\text{ is }\bQ\text{-nef}\},
    \end{align*}
    where $E$ is an effective Cartier divisor on $\tilde{X}$ whose associated invertible sheaf is the dual of $\pi^{-1}\cI_Y\cdot \cO_{\tilde{X}}$.
\end{definition}

\begin{lemma}
\label{betaseshadri}
Let $X$ be a normal projective variety of dimension $r$. Let $Y$ be a closed subscheme of $X$ with $\codim Y\geq 2$, let $D\supset Y$ be an effective Cartier divisor, and let $A$ be an ample Cartier divisor on $X$. Then
\begin{align*}
\beta(A,Y)\geq \beta(A,D)+\frac{1}{r+1}\epsilon(A,Y).
\end{align*}
\end{lemma}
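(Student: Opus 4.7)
The strategy is to reduce the claim to Lemma~\ref{betalcm} by constructing, for each rational $\gamma < \epsilon(A,Y)$, an auxiliary effective Cartier divisor $E_0 \sim MA$ (for some sufficiently large $M$) that contains a large multiple of $Y$ and intersects $D$ properly. Let $\pi:\tilde X\to X$ be the blowup of $X$ along $Y$, with exceptional Cartier divisor $E$. By the definition of the Seshadri constant, $\pi^*A-\gamma E$ is $\bQ$-nef for $\gamma\in[0,\epsilon(A,Y))$, and since the volume is continuous and nonincreasing as one subtracts effective classes inside the nef cone while $\vol(\pi^*A)=A^r>0$, the class $\pi^*A-\gamma E$ is also $\bQ$-big in this range. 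In particular, for $M$ large with $M\gamma\in\bZ$, the linear system $|M(\pi^*A-\gamma E)|$ is nonempty and big.

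I would then choose a section $\sigma\in H^0(\tilde X,M(\pi^*A-\gamma E))$ generically, so that writing $(\sigma)=aE+D_\sigma'$ with $D_\sigma'$ containing no $E$-component, $D_\sigma'$ shares no component with the strict transform $\tilde D$ of $D$. This is possible for $M$ large because the stable base locus of $\pi^*A-\gamma E$ is contained in the exceptional locus $E$ (away from $E$, $\pi^*A$ is the pullback of an ample divisor, so $\tilde D$ is not a fixed component of $|M(\pi^*A-\gamma E)|$). Set $E_0:=\pi_*D_\sigma'\sim MA$; this is an effective Cartier divisor on $X$ that shares no component with $D$. When $X$ is Cohen--Macaulay (in particular in the normal surface case of the paper's main application), sharing no component implies that $D$ and $E_0$ intersect properly in the sense of Definition~\ref{properdef}.

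From $\pi^*E_0=D_\sigma'+(a+M\gamma)E$ and the standard identity $\pi_*\cO_{\tilde X}(-mE)=\overline{\mathcal{I}_Y^m}$ for the normalized blowup, we obtain $\mathcal{I}_{E_0}\subset\overline{\mathcal{I}_Y^{M\gamma}}$. By the Brian{\c c}on--Skoda theorem in the regular case (or Rees/Izumi's theorem for excellent normal rings more generally), there is a constant $c_0=c_0(X,Y)\geq 0$, independent of $M$, such that $\overline{\mathcal{I}_Y^m}\subset\mathcal{I}_Y^{m-c_0}$ for all $m\gg 0$. Hence $(M\gamma-c_0)Y\subset E_0$ as closed subschemes for $M$ sufficiently large. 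Applying Lemma~\ref{betalcm} to $D$ (with multiplicity $1$, since $Y\subset D$) and $E_0$ (with multiplicity $M\gamma-c_0$) then gives
\[
\beta(A,Y)\geq\beta(A,D)+(M\gamma-c_0)\,\beta(A,E_0).
\]

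A direct asymptotic Riemann--Roch calculation, as recalled in Section~1 for $D\sim dA$, yields $\beta(A,E_0)=\tfrac{1}{M(r+1)}$. Hence $\beta(A,Y)\geq\beta(A,D)+\tfrac{M\gamma-c_0}{M(r+1)}$. Letting $M\to\infty$ gives $\beta(A,Y)\geq\beta(A,D)+\tfrac{\gamma}{r+1}$, and then letting $\gamma\to\epsilon(A,Y)$ finishes the proof. The main obstacle will be the Bertini-type choice of $\sigma$ ensuring that $D_\sigma'$ avoids $\tilde D$, which requires a careful analysis of the stable base locus of the big $\bQ$-divisor $\pi^*A-\gamma E$ on $\tilde X$; the Brian{\c c}on--Skoda/Izumi step is standard but essential to translate the integral-closure vanishing of $E_0$ along $Y$ into the ordinary ideal-theoretic vanishing required by Lemma~\ref{betalcm}.
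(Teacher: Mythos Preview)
Your approach is essentially the paper's: construct an auxiliary divisor linearly equivalent to a large multiple of $A$, containing a high multiple of $Y$, and intersecting $D$ properly, then invoke Lemma~\ref{betalcm}. The gap is in your Bertini step. The claim that the stable base locus of $\pi^*A-\gamma E$ is contained in $E$ is not justified by your parenthetical reasoning: the fact that $\pi^*A$ restricts to an ample class on $\tilde X\setminus E$ says nothing about the base locus of the \emph{constrained} linear system $|M(\pi^*A-\gamma E)|$, whose sections are globally required to vanish to order $M\gamma$ along $E$. A component of $\tilde D$ could well lie in this stable base locus (nef and big divisors can have divisorial base loci), and you give no argument ruling this out; you yourself flag this as ``the main obstacle'' but do not actually resolve it.

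The paper sidesteps the issue via a $\delta$-perturbation (citing \cite{HL21}): one takes $F\sim N(1+\delta)A$ rather than $F\sim NA$. Then for $\varepsilon>0$ small relative to $\delta$, the class $(1+\delta)\pi^*A-(\epsilon_Y'+\varepsilon)E$ decomposes as $(\pi^*A-\epsilon_Y' E)+(\delta\pi^*A-\varepsilon E)$, a nef class plus an ample class, hence ample. High multiples are then base-point free and general position with $D$ is immediate; the extra factor $(1+\delta)$ in the final estimate disappears upon letting $\delta\to 0$. Two minor notes: the paper uses normality (Serre's $S_2$ condition), not the stronger Cohen--Macaulay hypothesis, to pass from ``no common component'' to ``intersect properly'' for two Cartier divisors; and it cites \cite{HL21} for an equality $\pi_*\cO_{\tilde X}(-mE)=\mathcal I_Y^m$ for $m$ large and divisible, rather than going through Brian\c{c}on--Skoda as you do, though your route there is also workable.
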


\begin{proof}
Let $\pi:\tilde{X}\to X$ be the blowup of $X$ along $Y$, and let $E$ be the associated exceptional divisor. Let $\epsilon_Y'\leq \epsilon(A,Y)$ be a positive rational number and let $\delta>0$ be a rational number. By the same proof as in \cite{HL21}, for all sufficiently small $\varepsilon>0$ (depending on $\delta$) and sufficiently large and divisible $N$, we can find an effective divisor $F$ such that
\begin{enumerate}
\item $N(1+\delta)A\sim F$
\item $\pi^*F\geq N(\epsilon_Y'+\varepsilon)E$
\item $D$ and $F$ are in general position.
\end{enumerate}
As $X$ is normal, the last condition and Serre's criterion imply that $D$ and $F$ intersect properly. For all sufficiently large and divisible $N$, $\pi_*\CO_{\tilde{X}}(-N(\epsilon_Y'+\varepsilon)E)=\mathcal{I}_Y^{N(\epsilon_Y'+\varepsilon)}$ \cite{HL21} and since $X$ is normal and $\pi$ is birational, we have $\pi_*\pi^*F=F$ \cite[Lemma 6.2(a)]{Voj23}. Then $\beta(A,F)=\frac{1}{(r+1)N(1+\delta)}$, $N(\epsilon_Y'+\varepsilon)Y\subset F$ (for sufficiently large and divisible $N$), $Y\subset D$, and Lemma \ref{betalcm} gives
\begin{align*}
\beta(A,Y)\geq \beta(A,D)+N(\epsilon_Y'+\varepsilon)\frac{1}{(r+1)N(1+\delta)}=\beta(A,D)+(\epsilon_Y'+\varepsilon)\frac{1}{(r+1)(1+\delta)}.
\end{align*} 
Since we may choose $\varepsilon, \delta$, and $\epsilon_Y'-\epsilon(A,Y)$ arbitrarily small, we find that
\begin{align*}
\beta(A,Y)\geq \beta(A,D)+\frac{1}{r+1}\epsilon(A,Y)
\end{align*} 
as desired.
\end{proof}

When $Y$ is a closed subscheme supported at a point $P$, we will relate $\beta(A,Y)$, $\mu_P(Y)$, and the volume of $A$. We first recall the definition of the volume of a divisor.

\begin{definition}\label{voldef}
Let $X$ be a projective variety of dimension $n$, and let $D$ be a Cartier divisor on $X$. The
volume of $D$ is defined to be the non-negative real number
\begin{align*}
    \emph{vol}(D)=\limsup_{m \to \infty}\frac{h^0(X,mD)}{m^n/n!}.
\end{align*}
\end{definition}

\begin{remark}\label{volrem}
The $\limsup$ in Definition \ref{voldef} can be replaced by a limit \cite[Remark~2.2.50]{Laz04}, and the definition can be extended to $\mathbb{Q}$-divisors \cite[Remark~2.2.39]{Laz04}.
\end{remark}

\begin{remark}
We have $\mathrm{vol}(D) > 0$ if and only if $D$ is big. 
\end{remark}

\begin{remark}
If $D$ is nef, then $\emph{vol}(D)=D^n$ is the top self-intersection of $D$.
\end{remark}

We need the following lemma.

\begin{lemma}
\label{multlemma1}
Let $A$ be a big Cartier divisor on a projective variety $X$ of dimension $r$. Let $Y$ be a closed subscheme of $X$ supported at a point $P$ of $X$, let $\mathcal{I}$ be the associated ideal sheaf, and let $\mu_P$ be the multiplicity of $Y$ at $P$. 
Then
\begin{align}
\label{RRAI}
\dim H^0(X,\cO(nA)\otimes \mathcal{I}^m)\geq h^0(X,\cO(nA))-\mu_P\frac{m^r}{r!}+O(m^{r-1}).
\end{align}
In particular, let $c\in\mathbb{Q}$ be such that
\begin{align*}
0<c<\sqrt[r]{\frac{\emph{vol}(A)}{\mu_P}}.
\end{align*}
Then for $n\gg 0$ there exists an effective divisor $F$ such that 
\begin{enumerate}
\item $F\sim nA$,
\item $\lfloor cn \rfloor Y\subset F$ (as closed subschemes).
\end{enumerate}
\end{lemma}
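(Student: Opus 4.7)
The plan is to use the natural short exact sequence of sheaves
$$0 \to \cO(nA) \otimes \mathcal{I}^m \to \cO(nA) \to \cO(nA) \otimes \cO_X/\mathcal{I}^m \to 0.$$
Since $\mathcal{I}$ defines a subscheme supported only at $P$, the rightmost sheaf is a skyscraper sheaf at $P$, and because $\cO(nA)$ is locally free of rank $1$, its stalk at $P$ is isomorphic as an $\cO_{X,P}$-module to $\cO_{X,P}/\mathcal{I}_P^m$. Thus its global sections have $K$-dimension equal to $L_{\mathcal{I}_P,\cO_{X,P}}(m)$, which by the Hilbert--Samuel definition of multiplicity recalled in Section~\ref{sec: back} is $\mu_P m^r/r! + O(m^{r-1})$ (here $r = \dim \cO_{X,P} = \dim X$). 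Taking global sections of the exact sequence and using left-exactness of $H^0$ immediately yields the bound in (1).

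For the second statement, I would apply (1) with $m = \lfloor cn\rfloor$. By Definition \ref{voldef} and Remark \ref{volrem},
$$h^0(X,\cO(nA)) = \frac{\mathrm{vol}(A)}{r!}\,n^r + o(n^r),$$
while $\mu_P\lfloor cn\rfloor^r/r! \le \mu_P c^r n^r/r!$. Substituting these into the inequality from (1) gives
$$h^0(X,\cO(nA)\otimes\mathcal{I}^{\lfloor cn\rfloor}) \;\ge\; \frac{n^r}{r!}\bigl(\mathrm{vol}(A)-\mu_P c^r\bigr) + o(n^r),$$
whose leading coefficient is strictly positive by the hypothesis $c < \sqrt[r]{\mathrm{vol}(A)/\mu_P}$. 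Hence for all sufficiently large $n$ there exists a nonzero section in $H^0(X,\cO(nA)\otimes\mathcal{I}^{\lfloor cn\rfloor})$, whose divisor of zeros is an effective $F \sim nA$ with $\lfloor cn\rfloor Y \subset F$, as required.

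The only mildly delicate point is identifying $\dim_K H^0(X,\cO_X/\mathcal{I}^m)$ with $L_{\mathcal{I}_P,\cO_{X,P}}(m)$: if the residue field $K(P)$ strictly contains $K$, these differ by the factor $[K(P):K]$, which can be absorbed into the convention for $\mu_P$ or removed by base-changing to $\overline{K}$ (heights and global sections are unaffected). Apart from this bookkeeping, no substantial obstacle arises; the proof is a clean combination of a short exact sequence argument with the asymptotic formulas defining multiplicity and volume.
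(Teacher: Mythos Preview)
Your proof is correct and follows essentially the same argument as the paper: the short exact sequence tensored with $\cO(nA)$, the identification of the quotient with a skyscraper sheaf whose global sections have dimension $\mu_P m^r/r! + O(m^{r-1})$ via Hilbert--Samuel, and then the volume asymptotic for the second part. Your remark about the residue field $K(P)$ versus $K$ is a nice extra observation that the paper does not make explicit.
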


\begin{proof}
From the definition of the multiplicity $\mu_P$, we have
\begin{align}
\label{RR1}
\dim \cO_P/\mathcal{I}_P^m=\mu_P \frac{m^r}{r!}+O(m^{r-1}).
\end{align}

By definition of the ideal sheaf, we have an exact sequence
\begin{align*}
0\to \mathcal{I}^{m}\to \cO_X\to i_*\cO_{mY}\to 0,
\end{align*}
where $i:mY\to X$ is the inclusion map. Tensoring with $\cO(nA)$, we have an exact sequence
\begin{align*}
0\to \cO(nA)\otimes \mathcal{I}^{m}\to \cO(nA)\to \cO(nA)\otimes i_*\cO_{mY}= i_*\cO_{mY}\to 0,
\end{align*}
where the equality $\cO(nA)\otimes i_*\cO_{mY}= i_*\cO_{mY}$ follows easily from the fact that $\cO(nA)$ is locally free of rank $1$ and $i_*\cO_{mY}$ is supported at a point. Taking global sections, we find
\begin{align*}
%\label{global}
0\to H^0(X,\cO(nA)\otimes \mathcal{I}^{m})\to H^0(X,\cO(nA)) &\to H^0(X,i_*\cO_{mY})\\
&= H^0 (mY,\cO_{mY}) = \cO_P/\mathcal{I}_P^{m}.
\end{align*}
It follows that
\begin{align*}
\dim H^0(X,\cO(nA)\otimes \mathcal{I}^{m })\geq \dim H^0(X,\cO(nA)) - \dim \cO_P/\mathcal{I}_P^{m},
\end{align*}
and then \eqref{RRAI} follows from \eqref{RR1}.

From \eqref{RRAI}, the definition of the volume, and Remark \ref{volrem}, it follows that if $c<\sqrt[r]{\frac{\vol(A)}{\mu_P}}$, then there exists a nonzero global section $s$ of $\cO(nA)\otimes \mathcal{I}^{ \lfloor cn\rfloor}$ for $n\gg 0$, and taking $F=\dv(s)$ gives the desired divisor.
\end{proof}

We now prove a basic inequality between $\beta(A,Y)$, $\mu_P(Y)$, and $\vol(A)$.

\begin{lemma}
\label{betamultsimple}
Let $A$ be a big Cartier divisor on a projective variety $X$ of dimension $r$. Let $Y$ be a closed subscheme of $X$ supported at a point $P$ of $X$, and let $\mu_P$ be the multiplicity of $Y$ at $P$. 
Then
\begin{align*}
\beta(A,Y)\geq \frac{r}{r+1}\sqrt[r]{\frac{\vol(A)}{\mu_P}}.
\end{align*}

\end{lemma}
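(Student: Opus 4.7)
The plan is to apply the volume-theoretic bound \eqref{RRAI} of Lemma \ref{multlemma1} directly inside the defining sum for $\beta(A,Y)$, summing over a sharply-chosen range of $m$, and then to optimize the cutoff. Note that using only the existence (from the second half of Lemma \ref{multlemma1}) of a single divisor $F\sim nA$ with $\lfloor cn\rfloor Y\subset F$, together with the implication $\beta(A,Y)\geq \lfloor cn\rfloor \beta(A,F)=\lfloor cn\rfloor/((r+1)n)$, would only yield the weaker bound $\beta(A,Y)\geq \tfrac{1}{r+1}\sqrt[r]{\vol(A)/\mu_P}$. The extra factor of $r$ in the conclusion must come from combining information from all small $m$ simultaneously, rather than picking a single divisor.

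Concretely, fix a positive rational $c<\sqrt[r]{\vol(A)/\mu_P}$. For $N\gg 0$ and $1\leq m\leq \lfloor cN\rfloor$, inequality \eqref{RRAI} provides the lower bound $h^0(X,\cO(NA)\otimes \mathcal{I}^m)\geq h^0(X,\cO(NA))-\mu_P m^r/r!+O(m^{r-1})$, with the range of $m$ chosen precisely so that this main term is positive for large $N$. Summing over this range, applying the elementary estimate $\sum_{m=1}^{\lfloor cN\rfloor} m^r=(cN)^{r+1}/(r+1)+O(N^r)$, and discarding the remaining non-negative contributions for $m>\lfloor cN\rfloor$, one obtains
\[
\sum_{m=1}^{\infty} h^0(X,\cO(NA)\otimes \mathcal{I}^m)\ \geq\ cN\cdot h^0(X,\cO(NA))-\frac{\mu_P(cN)^{r+1}}{(r+1)!}+O(N^r).
\]
Dividing by $N\cdot h^0(X,\cO(NA))$, invoking the bigness asymptotic $h^0(X,\cO(NA))=\vol(A)N^r/r!+o(N^r)$ (Definition \ref{voldef}, Remark \ref{volrem}), and letting $N\to\infty$ yields
\[
\beta(A,Y)\ \geq\ c-\frac{\mu_P\,c^{r+1}}{(r+1)\vol(A)}.
\]

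The proof concludes by optimizing the right-hand side over $c\in(0,\infty)$: one-variable calculus shows the maximum is attained at $c_\ast=\sqrt[r]{\vol(A)/\mu_P}$ with value $\tfrac{r}{r+1}c_\ast$, so taking $c\to c_\ast^-$ through the admissible range produces the claimed inequality. There is no real obstacle beyond bookkeeping; one need only verify that the $O(m^{r-1})$ and $O(N^r)$ error terms, once divided by $Nh^0(X,\cO(NA))\sim \vol(A)N^{r+1}/r!$, contribute nothing in the limit. Thus the argument amounts to a one-variable optimization grafted onto the Riemann-Roch-type lower bound already established in Lemma \ref{multlemma1}, with the key insight being that the summation-based approach, rather than the single-divisor approach, is what recovers the sharp constant $r/(r+1)$.
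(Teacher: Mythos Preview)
Your proof is correct and follows essentially the same approach as the paper: both fix $c<\sqrt[r]{\vol(A)/\mu_P}$, sum the lower bound \eqref{RRAI} from Lemma~\ref{multlemma1} over $1\le m\le cN$, divide by $Nh^0(NA)$, pass to the limit to obtain $\beta(A,Y)\ge c-\mu_P c^{r+1}/((r+1)\vol(A))$, and then let $c\to\sqrt[r]{\vol(A)/\mu_P}$. The only cosmetic difference is that the paper takes $N$ with $cN\in\mathbb{Z}$ rather than writing $\lfloor cN\rfloor$, and simply evaluates at the endpoint rather than phrasing it as a calculus optimization.
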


\begin{proof}
Let $c\in\mathbb{Q}$ be such that
\begin{align*}
0<c<\sqrt[r]{\frac{\vol(A)}{\mu_P}}.
\end{align*}
Let $N$ be a positive integer such that $cN$ is an integer. Let $\mathcal{I}$ be the sheaf of ideals associated to $Y$.  Then we calculate
\begin{align*}
\sum_{m=1}^{cN}h^0(\cO(NA)\otimes \mathcal{I}^m)&\geq \sum_{m=1}^{cN} \left(h^0(\cO(NA))-\mu_p\frac{m^r}{r!}+O(N^{r-1})\right)\\
&\geq cNh^0(\cO(NA))-\mu_P\frac{(cN)^{r+1}}{(r+1)!}+O(N^r).
\end{align*}
Dividing by $Nh^0(\cO(NA))$ we find that
\begin{align*}
\frac{\sum_{m=1}^{cN}h^0(\cO(NA)\otimes \mathcal{I}^m)}{Nh^0(\cO(NA))}&\geq c-\mu_P\frac{c^{r+1}N^{r}}{(r+1)!h^0(\cO(NA)}+O\left(\frac{N^{r-1}}{h^0(\cO(NA)}\right).
\end{align*}
Letting $N\to\infty$ gives
\begin{align*}
\beta(A,Y)\geq  \frac{c}{(r+1)\vol(A)}\left((r+1)\vol(A)-\mu_Pc^r\right).
\end{align*}
Since this is true for all appropriate values of $c$, letting $c\to \sqrt[r]{\frac{\vol(A)}{\mu_P}}$  gives the inequality.
\end{proof}

Under more hypotheses, we give another estimate involving $\beta(A,Y)$, $\mu_P(Y)$, and $\vol(A)$. Towards this end, we first prove the following lemma.

\begin{lemma}
\label{multlemma2}
Let $A$ and $D$ be big Cartier divisors on a normal projective surface $X$, with $D$ a positive multiple of a prime Cartier divisor. Let $Y$ be a closed subscheme of $X$ supported at a point $P$, and let $\mu_P$ be the multiplicity of $Y$ at $P$. Suppose that $D\supset Y$ is a regular chain of closed subschemes of $X$ and
\begin{align*}
\mu_P< \vol(D).
\end{align*}

Let $c\in\mathbb{Q}$ be such that
\begin{align*}
c<\sqrt{\frac{\vol(A)}{\mu_P}}.
\end{align*}

Then for $n\gg 0$ there exists an effective divisor $F$ such that 
\begin{enumerate}
\item $F\sim nA$,
\item $\lfloor cn\rfloor Y\subset F$ (as closed subschemes),
\item $D$ and $F$ intersect properly.
\end{enumerate}

\end{lemma}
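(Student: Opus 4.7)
The approach is to refine the conclusion of Lemma~\ref{multlemma1} by additionally ensuring $F$ shares no component with $D$. Since $X$ is a normal surface and $D = kD_0$ with $D_0$ prime, proper intersection of the effective divisor $F$ with $D$ is equivalent to $D_0 \not\subset \Supp F$. My plan is to show that the subspace $V_n \subset W_n := H^0(X, \cO(nA)\otimes \mathcal{I}^{\lfloor cn\rfloor})$ of sections whose divisor contains $D_0$ is a proper subspace for $n \gg 0$; any section in $W_n \setminus V_n$ then yields the desired $F$.

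By Lemma~\ref{multlemma1} the ambient space $W_n$ is nonzero for $n \gg 0$, so the task reduces to showing $\dim V_n < \dim W_n$. The subspace $V_n$ is $s_{D_0}\cdot H^0(X,\cO(nA-D_0) \otimes (\mathcal{I}^{\lfloor cn\rfloor}:s_{D_0}))$, where $s_{D_0}$ is a local equation of $D_0$; by Lemma~\ref{ideallemma} combined with the regular-chain hypothesis, the colon ideal $(\mathcal{I}^{\lfloor cn\rfloor}:s_{D_0})$ is controlled (it equals $\mathcal{I}^{\lfloor cn\rfloor-1}$ when $k=1$, and a close variant of $\mathcal{I}^{\lfloor cn\rfloor}$ when $k>1$). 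I would then compare $\dim V_n$ to $\dim W_n$ using the short exact sequence (exact since $\cO(-D_0)$ is locally free)
\begin{align*}
0 \to \cO(nA-D_0)\otimes\mathcal{I}^{\lfloor cn\rfloor} \to \cO(nA)\otimes\mathcal{I}^{\lfloor cn\rfloor} \to \bigl(\cO(nA)\otimes\mathcal{I}^{\lfloor cn\rfloor}\bigr)\otimes\cO_{D_0} \to 0
\end{align*}
together with the natural map of $W_n$ into $H^0(D_0,\cO(nA)|_{D_0})$, whose kernel is precisely $V_n$.

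The key numerical input is the inequality $A.D_0 > c\mu_P/k$, which follows from Hodge index applied to the big Cartier divisors $A$ and $D_0$ on the surface: $(A.D_0)^2 \geq \vol(A)\vol(D_0) = \vol(A)\vol(D)/k^2$. Combined with $\vol(D) > \mu_P$ and $c < \sqrt{\vol(A)/\mu_P}$, this yields
\begin{align*}
(kA.D_0)^2 \;\geq\; \vol(A)\vol(D) \;>\; \vol(A)\mu_P \;>\; c^2\mu_P^2,
\end{align*}
so $kA.D_0 > c\mu_P$. On $D_0$, the image of the restriction map factors through a twist of the line bundle $\cO(nA)|_{D_0}$ (of degree $nA.D_0$) by a zero-dimensional subscheme supported at $P$ of length approximately $cn\mu_P/k$ (arising from $\mathcal{I}^{\lfloor cn\rfloor}\cdot \cO_{D_0}$, which locally at $P$ is generated by $\bar{g}^{\lfloor cn\rfloor}$). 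The resulting coherent sheaf on $D_0$ has strictly positive degree $\approx n(A.D_0 - c\mu_P/k)$ growing linearly in $n$, hence has plenty of global sections for $n \gg 0$ by Riemann--Roch on $D_0$.

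The hardest step will be to lift this nonvanishing on $D_0$ back to nonvanishing of the image of $W_n$ in $H^0(D_0,\cO(nA)|_{D_0})$: this requires controlling the $H^1$ terms of the twisted ideal sheaves appearing on the left of the long exact sequence, via asymptotic Serre- or Fujita-type vanishing that exploits the bigness of $A$. This cohomological bookkeeping is the main technical obstacle and is where the precise form of the hypothesis $\mu_P < \vol(D)$ is used to balance the contributions from the line-bundle, ideal, and error terms.
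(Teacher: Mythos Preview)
Your strategy is genuinely different from the paper's, and the gap you flag as ``the hardest step'' is real and not resolved by what you wrote. Fujita-type vanishing applies to $H^i(X,\mathcal{F}\otimes L^n)$ with $\mathcal{F}$ \emph{fixed} and $L$ ample; here $A$ is only big, and more seriously the ideal power $\mathcal{I}^{\lfloor cn\rfloor}$ scales with $n$, so no standard asymptotic vanishing theorem gives you control of $H^1\bigl(X,\cO(nA-D_0)\otimes\mathcal{I}^{\lfloor cn\rfloor}\bigr)$. Without that, you have no lower bound on the image of $W_n$ in $H^0(D_0,\cdots)$, and the Riemann--Roch count on $D_0$ tells you nothing about $\dim W_n-\dim V_n$. (Your Hodge-index step is in fact correct, but also needs a word: a big prime divisor on a surface is automatically nef, and then Zariski decomposition of $A$ reduces to the classical inequality. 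As stated, ``Hodge index for big divisors'' is not a theorem.)

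The paper avoids cohomology entirely via a substitution argument. Using $\mu_P<\vol(D)$, it applies Lemma~\ref{multlemma1} with $A$ replaced by $D$ and a constant $c'>1$ to produce $F'\sim n'D$ with $n'c'Y\subset F'$; stripping off the $D_0$-component via Lemma~\ref{orderlemma} yields $D'\sim n_0D_0$ with $D_0\not\subset\Supp D'$ and $n_Y(D')>n_0/m$. This $D'$ is then used as a replacement: given any $F''\sim nA$ from Lemma~\ref{multlemma1} with $cnY\subset F''$, write $F''=jD_0+D''$ and set $F=\tfrac{j}{n_0}D'+D''$. This is still $\sim nA$, has no $D_0$-component, and Lemma~\ref{orderlemma} shows $n_Y(F)\geq cn-1$. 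The whole argument is elementary once Lemma~\ref{orderlemma} is in hand; your approach, if it could be completed, would be more conceptual but requires vanishing input that is not available off the shelf.
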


\begin{proof}
For simplicity and ease of notation, we assume throughout that $cn\in\mathbb{Z}$. Let $D=mD_0$ where $D_0$ is a prime Cartier divisor on $X$. Since $\mu_P<\vol(D)$, applying Lemma \ref{multlemma1} with $A=D$ and an appropriate rational number $c'>1$, it follows that for any sufficiently large and divisible integer $n'$, there exists an effective divisor $F'$ such that $F'\sim n'D$ and $n'c'Y\subset F'$. Since $X$ is normal, we can write $F'=iD_0+D'$, where $i$ is a nonnegative integer, $D'$ is effective, and $\Supp D_0\not\subset \Supp D'$.  By Lemma \ref{orderlemma},
\begin{align*}
n_Y(D')\geq n_Y(F')-n_Y(iD_0)-1\geq n'c'-i/m-1.
\end{align*}
Let $n_0=mn'-i$, and note also that $D'\sim n_0D_0$. Clearly $i\leq mn'$, and in fact $i<mn'$ for sufficiently large $n'$ (or equivalently, $n_0>0$, as we now assume) since $c'>1$ and therefore $n_Y(D')\geq n'c'-i/m-1\geq n'(c'-1)-1>0$ for large $n'$. Choosing $n'$ sufficiently large, we also have $n_Y(D')> n'-i/m=n_0/m$. %Let $n'=mn-i>0$.

By Lemma \ref{multlemma1} again, there exists an effective divisor $F''$ such that $F''\sim nA$ and $cnY\subset F''$. Write $F''=jD_0+D''$, where $j$ is a nonnegative integer, $D''$ is effective, and $\Supp D_0\not\subset \Supp D''$. Replacing $F''$, $n$, $j$, and $D''$ by suitable multiples, we may assume that $n_0|j$. Then $F''=jD_0+D''\sim \frac{j}{n_0}D'+D''$. By the same argument as before,
\begin{align*}
n_Y(D'')\geq n_Y(F'')-n_Y(jD_0)-1\geq cn-j/m-1.
\end{align*}
Let $F=\frac{j}{n_0}D'+D''$. Then
\begin{align*}
n_Y(F)\geq \frac{j}{n_0}n_Y(D')+n_Y(D'')\geq \frac{j}{n_0}\frac{n_0}{m}+ cn-j/m-1=cn-1.
\end{align*}
Replacing $c$ by a slightly large constant and taking $n$ sufficiently large then gives
\begin{align*}
n_Y(F)\geq cn.
\end{align*}

It follows that $cnY\subset F$, and we end by noting that $F\sim nA$ and $\Supp D\not\subset \Supp F=\Supp D'\cup \Supp D''$, which implies that $D$ and $F$ are in general position, and hence intersect properly (as a normal surface is Cohen-Macaulay).
\end{proof}

We prove the following inequality, which will be used in Corollary \ref{mainmultineq}. For simplicity, we state it under an ampleness assumption.

\begin{lemma}
\label{betamultineq}
Let $A$ and $D$ be ample Cartier divisors on a normal projective surface $X$, with $D$ a positive multiple of a prime Cartier divisor. Let $Y$ be a closed subscheme of $X$ supported at a point $P$, and let $\mu_P$ be the multiplicity of $Y$ at $P$. Suppose that $D\supset Y$ is a regular chain of closed subschemes of $X$ and
\begin{align*}
\mu_P<D^2.
\end{align*}
Then
\begin{align}
\label{betaYDineq}
\beta(A,Y)\geq \left(1+\sqrt{\frac{D^2}{\mu_P}}\right)\beta(A,D)
\end{align}
and
\begin{align*}
\beta(A,Y)-\beta(A,D)&\geq \frac{2}{3} \sqrt{\frac{A^2}{\mu_P}}\frac{\sqrt{\frac{D^2}{\mu_P}}}{1+\sqrt{\frac{D^2}{\mu_P}}}\\
&> \frac{1}{3} \sqrt{\frac{A^2}{\mu_P}}.
\end{align*}
\end{lemma}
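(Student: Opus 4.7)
The plan is to establish inequality \eqref{betaYDineq} by invoking Lemma \ref{multlemma2} with $D$ playing the role of both ``$A$'' and ``$D$'' in that lemma, and then to derive the second inequality as an interpolation between \eqref{betaYDineq} and the bound $\beta(A,Y)\geq\tfrac{2}{3}\sqrt{A^2/\mu_P}$ from Lemma \ref{betamultsimple}. The essential difficulty is already encapsulated in Lemma \ref{multlemma2}: producing an effective divisor $F\sim nD$ that contains $Y$ to order approaching the optimal $n\sqrt{D^2/\mu_P}$ while still intersecting $D$ properly. Once that construction is available, the remainder of the argument is essentially formal.

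For \eqref{betaYDineq}, I verify that the hypotheses of Lemma \ref{multlemma2} hold with the pair $(A,D)$ in that lemma replaced by $(D,D)$: our $D$ is ample (hence big), is a positive multiple of a prime Cartier divisor, satisfies $D\supset Y$ as a regular chain, and $\mu_P<D^2=\vol(D)$. The lemma then produces, for any rational $c<\sqrt{D^2/\mu_P}$ and all sufficiently large $n$, an effective divisor $F$ with $F\sim nD$, $\lfloor cn\rfloor Y\subset F$, and $D,F$ intersecting properly. Combining $Y\subset D$ with Lemma \ref{betalcm} then gives
$$\beta(A,Y)\geq\beta(A,D)+\lfloor cn\rfloor\beta(A,F).$$
Since $\beta$ depends only on numerical equivalence classes and $F\sim nD$ implies $F\equiv nD$, one has $\beta(A,F)=\beta(A,nD)=\tfrac{1}{n}\beta(A,D)$. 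Letting $n\to\infty$ and then $c\to\sqrt{D^2/\mu_P}$ yields \eqref{betaYDineq}.

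For the second inequality, set $t=\sqrt{D^2/\mu_P}$, which strictly exceeds $1$ by the hypothesis $\mu_P<D^2$. Rearranging \eqref{betaYDineq} gives $\beta(A,Y)-\beta(A,D)\geq t\,\beta(A,D)$, while Lemma \ref{betamultsimple} (with $r=2$ and $\vol(A)=A^2$) gives $\beta(A,Y)-\beta(A,D)\geq\tfrac{2}{3}\sqrt{A^2/\mu_P}-\beta(A,D)$. Taking the convex combination of these two inequalities with weight $\alpha=1/(1+t)$ on the former and $1-\alpha=t/(1+t)$ on the latter makes the coefficient of $\beta(A,D)$ on the right equal to $\alpha t-(1-\alpha)=\alpha(1+t)-1=0$, leaving precisely the middle expression $\frac{2t}{3(1+t)}\sqrt{A^2/\mu_P}$ of the claimed bound. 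The final strict inequality $>\tfrac{1}{3}\sqrt{A^2/\mu_P}$ is then automatic, since $t>1$ forces $t/(1+t)>\tfrac{1}{2}$.
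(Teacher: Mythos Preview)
Your proof is correct and follows essentially the same approach as the paper: you invoke Lemma~\ref{multlemma2} (with $A=D$) together with Lemma~\ref{betalcm} to get \eqref{betaYDineq}, and then combine \eqref{betaYDineq} with Lemma~\ref{betamultsimple} to obtain the second inequality. The only cosmetic difference is that the paper derives the second bound by rewriting \eqref{betaYDineq} as $\beta(A,D)\leq \frac{1}{1+t}\beta(A,Y)$ and substituting directly, whereas you reach the same conclusion via a convex combination that eliminates $\beta(A,D)$; the two manipulations are equivalent.
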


\begin{proof}
Let $c\in\mathbb{Q}$ with $0<c<\sqrt{\frac{
D^2}{\mu_P}}$. Then by Lemma \ref{multlemma2}, for $n$ sufficiently large with $nc\in\mathbb{Z}$, we can find an effective divisor $F$ such that $F\sim  nD$, $ncY\subset F$, and $D$ and $F$ intersect properly. Note that $\beta(A,F)=\beta(A,nD)=\frac{1}{n}\beta(A,D)$. Then by Lemma \ref{betalcm}, we have
\begin{align*}
\beta(A,Y)\geq \beta(A,D)+cn\beta(A,F)=\beta(A,D)+c\beta(A,D).
\end{align*}
Since this holds for every positive rational $c<\sqrt{\frac{D^2}{\mu_P}}$, the inequality \eqref{betaYDineq} holds. This implies
\begin{align*}
\beta(A,Y)-\beta(A,D)&\geq  \beta(A,Y)- \frac{1}{1+\sqrt{\frac{D^2}{\mu_P}}}\beta(A,Y)\\
&\geq \frac{\sqrt{\frac{D^2}{\mu_P}}}{ 1+\sqrt{\frac{D^2}{\mu_P}}}\beta(A,Y)\\
&\geq \frac{2}{3} \sqrt{\frac{A^2}{\mu_P}}\frac{\sqrt{\frac{D^2}{\mu_P}}}{ 1+\sqrt{\frac{D^2}{\mu_P}}}\\
&>\frac{1}{3} \sqrt{\frac{A^2}{\mu_P}},
\end{align*}
where the last line uses that $\mu_P<D^2$.
\end{proof}

Finally, we prove a useful estimate for certain beta constants on the blowup of a projective variety $X$ of dimension $n$ at a point (the idea is to gives a slight improvement to the equality $\beta(D,D)=\frac{1}{n+1}$ for an ample divisor $D$ on $X$).

\begin{lemma}
\label{exclemma}
Let $D$ be an ample effective Cartier divisor on a projective variety $X$ over $K$ of dimension $n$. Let $P\in X(K)$, and let $\pi:\tilde{X}\to X$ be the blowup at $P$, with associated exceptional divisor $E$.  Then for all sufficiently small positive $\delta\in\mathbb{Q}$,
\begin{align}
\label{exceqn}
\beta(\pi^*D-\delta E, \pi^*D-E)>\frac{1}{n+1}.
\end{align}
\end{lemma}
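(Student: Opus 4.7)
The plan is to compute an asymptotic lower bound for $\beta(A_\delta, F)$ directly from the definition, where I write $A_\delta := \pi^*D - \delta E$ and $F := \pi^*D - E$, and then verify that this bound exceeds $\tfrac{1}{n+1}$ for all small $\delta > 0$. For sufficiently small rational $\delta$, $A_\delta$ is ample (a standard fact for blowups: $\pi^*D$ is pulled back from an ample class and $-E$ is $\pi$-ample, so small perturbations preserve ampleness). Assuming $P$ to be a smooth point of $X$ (which suffices for the application in the main theorem, where $X$ is a surface and one may avoid the finitely many singular points), the identities $(\pi^*D)^k \cdot E^{n-k} = 0$ for $1 \le k \le n-1$ and $E^n = (-1)^{n-1}$ give $\vol(A_\delta) = (\pi^*D - \delta E)^n = D^n - \delta^n$, so the denominator in $\beta(A_\delta, F)$ satisfies $N\, h^0(\tilde X, NA_\delta) = \tfrac{N^{n+1}(D^n - \delta^n)}{n!} + O(N^n)$.

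For the numerator, I restrict to $N$ with $N\delta \in \mathbb{Z}$, rewrite $NA_\delta - mF = (N-m)\pi^*D + (m - N\delta)E$, and apply the standard pushforward identities $\pi_* \mathcal{O}_{\tilde X}(cE) = \mathcal{O}_X$ for integers $c \ge 0$ and $\pi_* \mathcal{O}_{\tilde X}(-cE) = \mathcal{I}_P^{c}$ for $c \ge 0$, together with $H^0(\tilde X, \mathcal{F}) = H^0(X, \pi_* \mathcal{F})$. This yields
\begin{equation*}
h^0(\tilde X, NA_\delta - mF) \;=\;
\begin{cases}
h^0\bigl(X, (N-m)D\bigr) & \text{if } N\delta \le m \le N, \\
h^0\bigl(X, (N-m)D \otimes \mathcal{I}_P^{N\delta - m}\bigr) & \text{if } 0 \le m < N\delta, \\
0 & \text{if } m > N,
\end{cases}
\end{equation*}
the last line because $h^0(X, (N-m)D) = 0$ when $N - m < 0$ and $D$ is ample. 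Asymptotic Riemann--Roch handles the middle range, and Lemma \ref{multlemma1} applied with $Y = P$ (so $\mu_P = 1$) gives the uniform lower bound
\begin{equation*}
h^0\bigl((N-m)D \otimes \mathcal{I}_P^{N\delta - m}\bigr) \;\ge\; h^0\bigl((N-m)D\bigr) - \tfrac{(N\delta - m)^n}{n!} + O(N^{n-1})
\end{equation*}
in the low range.

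Packaging these estimates as a Riemann sum in $s = m/N$ and letting $N \to \infty$ converts the sum to
\begin{equation*}
\int_0^1 \tfrac{(1-s)^n D^n}{n!}\, ds \;-\; \int_0^\delta \tfrac{(\delta - s)^n}{n!}\, ds \;=\; \tfrac{D^n - \delta^{n+1}}{(n+1)!},
\end{equation*}
and dividing by the denominator gives $\beta(A_\delta, F) \ge \tfrac{D^n - \delta^{n+1}}{(n+1)(D^n - \delta^n)}$, which is strictly greater than $\tfrac{1}{n+1}$ for any $0 < \delta < 1$ since $\delta^n > \delta^{n+1}$ in that range. The main technical care is the uniformity of the Riemann--Roch and multiplicity error terms as $m$ varies with $N$, so that the discrete sums really do converge to the stated integrals; beyond that, the pushforward identities above (which are clean precisely because $P$ is smooth) together with Lemma \ref{multlemma1} are the main inputs.
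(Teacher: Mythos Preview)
Your argument follows essentially the same structure as the paper's: split the sum according to the sign of $m-N\delta$, estimate $h^0(NA_\delta - mF)$ in each range, and arrive at the lower bound $\tfrac{D^n-\gamma\delta^{n+1}}{(n+1)(D^n-\gamma\delta^n)}$. The final inequality is the same.

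The one genuine gap is your smoothness assumption on $P$. The lemma is stated for an arbitrary $P\in X(K)$, and your claim that smoothness ``suffices for the application\dots\ one may avoid the finitely many singular points'' is not justified: the only place the lemma is invoked is in the proof of Theorem~\ref{nonGeneral}, where the blown-up point $Q$ is forced to lie in $D_1\cap D_2\cap D_3$ on a surface $X$ with no smoothness hypothesis, so you cannot simply move $Q$ off the singular locus. Concretely, without smoothness the identities $E^n=(-1)^{n-1}$ and $\pi_*\cO_{\tilde X}(-cE)=\mathcal{I}_P^{\,c}$ can fail, and your volume computation $\vol(A_\delta)=D^n-\delta^n$ would need to be replaced by $D^n-\gamma\delta^n$ with $\gamma=(-1)^{n-1}E^n$.

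The paper sidesteps this entirely by working on $\tilde X$ throughout. For $0<m\le N\delta$ it observes that $(N-m)\pi^*D-(N\delta-m)E$ is ample (since $\tfrac{N\delta-m}{N-m}<\delta$) and applies asymptotic Riemann--Roch directly on $\tilde X$, with the general intersection formula $(a\pi^*D-bE)^n=a^nD^n-\gamma b^n$; the positivity $\gamma>0$ comes from $(\pi^*D-\delta E)^{n-1}\cdot E>0$. For $N\delta<m<N$ it uses only the effectivity inequality $h^0((N-m)\pi^*D+(m-N\delta)E)\ge h^0((N-m)\pi^*D)$, which needs no pushforward identity. Replacing your pushforward-and-Lemma~\ref{multlemma1} step by this direct Riemann--Roch argument on $\tilde X$ removes the smoothness restriction with no other changes to your outline.
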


\begin{proof}
First, we recall that for some positive $\delta'\in\mathbb{Q}$, the $\mathbb{Q}$-divisor $\pi^*D-\delta E$ is ample for all $0<\delta<\delta'$, $\delta\in\mathbb{Q}$ \cite[II, Prop.~7.10(b)]{Harts}. Let $\delta\in\mathbb{Q}$ satisfy $0<\delta<\min\{\delta',1\}$.   Since $\pi^*D-\delta E$ is ample, 
\begin{align*}
(\pi^*D-\delta E)^{n-1}.E=(-\delta)^{n-1}E^n>0.
\end{align*}
Let
\begin{align*}
\gamma:=(-1)^{n-1}E^n>0.
\end{align*}

Then
\begin{align*}
(a\pi^*D-bE)^n&=a^n(\pi^*D)^n+(-1)^nb^nE^n\\
&=a^nD^n-\gamma b^n.
\end{align*}

Let $N$ be such that $N\delta\in\mathbb{Z}$. We need to estimate
\begin{align*}
h^0\left(\tilde{X},N(\pi^*D-\delta E)-m(\pi^*D-E)\right)
=h^0\left(\tilde{X},(N-m)\pi^*D-(N\delta-m) E\right)
\end{align*}
for $m>0$. We break the estimate into two cases. We first consider $0<m\leq N\delta$. Since
\begin{align*}
\frac{N\delta-m}{N-m}<\delta<\delta',
\end{align*}
$(N-m)\pi^*D-(N\delta-m) E$ is ample, and it follows from Riemann-Roch estimates (e.g., using that $\pi^*D$ and $\pi^*D-\delta E$ are $\mathbb{Q}$-nef; see \cite[p.~233, Cas $k\leq n$]{Aut09}) that
\begin{align*}
h^0\left((N-m)\pi^*D-(N\delta-m) E\right)&=\frac{((N-m)\pi^*D-(N\delta-m) E)^n}{n!}+O(N^{n-1})\\
&=\frac{(N-m)^nD^n-\gamma(N\delta-m)^n}{n!}+O(N^{n-1})
\end{align*}
for $0<m\leq N\delta$. Next, for $N\delta<m<N$, note that
\begin{align*}
(N-m)\pi^*D-(N\delta-m) E\geq (N-m)\pi^*D,
\end{align*}
and so for $N\delta<m<N$,
\begin{align*}
h^0\left((N-m)\pi^*D-(N\delta-m) E\right)
\geq h^0((N-m)\pi^*D)=\frac{(N-m)^nD^n}{n!}+O(N^{n-1}).
\end{align*}

Therefore,
\begin{multline*}
\sum_{m\geq 1}h^0\left (N(\pi^*D-\delta E)-m(\pi^*D-E) \right)\\
\geq \sum_{m=1}^{N\delta}\frac{(N-m)^nD^n-\gamma(N\delta-m)^n}{n!}+\sum_{m=N\delta+1}^N\frac{(N-m)^nD^n}{n!}+O(N^n).
\end{multline*}
By standard formulas,
\begin{align*}
\sum_{m=1}^{N\delta}\frac{(N-m)^nD^n-\gamma(N\delta-m)^n}{n!}=\frac{N^{n+1}}{(n+1)!}(D^n-(1-\delta)^{n+1}D^n-\gamma\delta^{n+1})+O(N^n)
\end{align*}
and
\begin{align*}
\sum_{m=N\delta+1}^N\frac{(N-m)^nD^n}{n!}=\frac{N^{n+1}}{(n+1)!}(1-\delta)^{n+1}D^n+  O(N^n).
\end{align*}
Therefore,
\begin{align*}
\sum_{m\geq 1}h^0(N(\pi^*D-\delta E)-m(\pi^*D-E))\geq \frac{N^{n+1}}{(n+1)!}(D^n-\gamma\delta^{n+1})+O(N^n).
\end{align*}
On the other hand,
\begin{align*}
Nh^0(N(\pi^*D-\delta E))&=N \left(\frac{N^n}{n!}(D^n-\gamma\delta^n)+O(N^{n-1})\right)\\
&=\frac{N^{n+1}}{n!}(D^n-\gamma\delta^n)+O(N^n).
\end{align*}
Note also that, in particular, $\gamma\delta^n<D^n$.  Putting things together, we find
\begin{align*}
\lim_{N\to\infty}\frac{\sum_{m\geq 1}h^0(N(\pi^*D-\delta E)-m(\pi^*D-E))}{Nh^0(N(\pi^*D-\delta E))}&\geq \frac{1}{n+1}\frac{D^n-\gamma\delta^{n+1}}{D^n-\gamma\delta^{n}}\\
&>\frac{1}{n+1}
\end{align*}
as $\delta<1$.
\end{proof}

\section{Main Result and Some Consequences}\label{sec: main}

We now prove (and restate) the Main Theorem.

\begin{theorem}
Let $X$ be a projective surface defined over a number field $K$. Let $S$ be a finite set of places of $K$. For each $v\in S$, let $D_v\supset Y_v$ be a regular chain of nonempty closed subschemes of $X$. Let $A$ be a big Cartier divisor on $X$, and let $\varepsilon > 0$. 
Then there exists a proper Zariski-closed subset $Z\subset X$ such that for all 
$P\in X(K)\setminus Z$, 
we have 
\begin{equation*}
    \begin{aligned}
        \sum_{v\in S} \bigg(\beta(A,D_v) %\cdot 
        \lambda_{D_v,v}(P) + 
       \left( \beta(A, Y_v) - \beta(A, D_v) \right) \lambda_{Y_v,v}(P) \bigg) < (1 + \varepsilon) h_A(P).
    \end{aligned}
\end{equation*}
\end{theorem}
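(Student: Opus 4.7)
The plan is to adapt the Ru--Vojta strategy, exploiting the regular chain structure to encode both $D_v$ and $Y_v$ in a single basis of $H^0(X, NA)$. For each $v \in S$, I will construct a basis of $H^0(NA)$ adapted to two natural filtrations---one by order of vanishing along $D_v$ and one by powers of the ideal sheaf $\mathcal{I}_{Y_v}$---derive a local-height estimate per basis element via Lemma~\ref{ideallemma}, convert the filtration statistics into the beta constants using their definitions, and then apply Schmidt's Subspace Theorem~\ref{subspace} to the resulting $M+1$ hyperplanes in $\bP^M$, where $M+1 = h^0(NA)$ and the embedding is the one afforded by $|NA|$.

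In more detail, fix $N$ large so that $|NA|$ induces a birational rational map $\phi : X \dashrightarrow \bP^M$ (possible because $A$ is big). For each $v \in S$, apply Lemma~\ref{filtrationlemma} to $H^0(NA)$ and the two filtrations $\{H^0(NA - iD_v)\}_{i\geq 0}$ and $\{H^0(NA \otimes \mathcal{I}_{Y_v}^j)\}_{j\geq 0}$ to obtain a basis $s_1^{(v)}, \ldots, s_{M+1}^{(v)}$ adapted to both; write $i_l^{(v)}$ and $j_l^{(v)}$ for the associated vanishing orders. Since $Y_v \subset D_v$ implies $\mathcal{O}(-iD_v) \subset \mathcal{I}_{Y_v}^i$, one has $j_l^{(v)} \geq i_l^{(v)}$ for every $l$. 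The crucial local estimate: at any point $P \in \Supp Y_v$, pick local generators so that $\mathcal{I}_{D_v} = (f)$ and $\mathcal{I}_{Y_v} = (f,g)$ with $(f,g)$ a regular sequence; writing $s_l^{(v)} = f^{i_l^{(v)}} h$ locally, the colon-ideal identity of Lemma~\ref{ideallemma} gives $h \in (f,g)^{j_l^{(v)} - i_l^{(v)}}$, whence
\[
\lambda_{\dv(s_l^{(v)}), v}(P) \;\geq\; i_l^{(v)}\, \lambda_{D_v, v}(P) + \bigl(j_l^{(v)} - i_l^{(v)}\bigr)\, \lambda_{Y_v, v}(P) - O(1).
\]
Summing over $l$ and using the definitions,
\[
\sum_l i_l^{(v)} = \sum_{i\geq 1} h^0(NA - iD_v), \qquad \sum_l j_l^{(v)} = \sum_{j\geq 1} h^0(NA \otimes \mathcal{I}_{Y_v}^j),
\]
so $\sum_l i_l^{(v)}/(N(M+1)) \to \beta(A, D_v)$ and $\sum_l j_l^{(v)}/(N(M+1)) \to \beta(A, Y_v)$ as $N \to \infty$. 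Since the $M+1$ hyperplanes $H_l^{(v)} = \{s_l^{(v)} = 0\}$ are in general position in $\bP^M$ (being defined by a basis of linear forms), Schmidt's Subspace Theorem yields a proper Zariski-closed $W \subset \bP^M$ with
\[
\sum_{v \in S} \sum_{l=1}^{M+1} \lambda_{H_l^{(v)}, v}(\phi(P)) \;<\; (M+1 + \varepsilon') h(\phi(P)) \;\leq\; (M+1+\varepsilon') N h_A(P) + O(1)
\]
for $\phi(P) \notin W$. Combining this with the summed local estimate, dividing by $N(M+1)$, and letting $N \to \infty$ with $\varepsilon'$ small then produces the desired inequality, with $Z$ the union of $\phi^{-1}(W)$ and the indeterminacy locus of $\phi$.

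The main obstacle I anticipate is the local estimate. First, one must reduce to the case where $Y_v$ is supported at a single point, via Remark~\ref{remzerodim}, by subdividing $X(K)$ into finitely many subsets according to which support point of $Y_v$ dominates $\lambda_{Y_v, v}(P)$; this is harmless since both $S$ and $\Supp Y_v$ are finite. Second, one must pass cleanly from the colon-ideal identity to a genuine inequality of Weil functions, with the $O(1)$ absorbing the metric choice (a standard but delicate translation). A secondary subtlety is that the $O(1)$ constants implicit in the local estimate depend on the basis and hence on $N$; one must verify that these constants can be absorbed into $\varepsilon h_A(P)$ on the complement of a proper Zariski-closed set, using $\lambda_{D_v, v}, \lambda_{Y_v, v} = O(h_A) + O(1)$ together with the usual trick of enlarging $Z$.
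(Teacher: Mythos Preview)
Your proposal is correct and follows essentially the same approach as the paper: reduce to $Y_v$ supported at a single point, build a basis of $H^0(NA)$ adapted to the two filtrations via Lemma~\ref{filtrationlemma}, use the regular-chain colon-ideal identity (Lemma~\ref{ideallemma}) to separate the $D_v$-contribution from the $Y_v$-contribution, and apply a Schmidt-type theorem. The only organizational difference is that you apply Lemma~\ref{ideallemma} section by section and then sum, whereas the paper sums the divisors $\sum_l \dv(s_{l,v})$ first and then invokes the packaged form Lemma~\ref{orderlemma} to bound $n_{Y_v}(F_v)$; this yields the same estimate up to a harmless ``$-1$'' that vanishes after dividing by $Nh^0(NA)$.
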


\begin{proof}
From the regular chain assumption, for all $v\in S$, $D_v$ is an effective Cartier divisor and $\dim Y_v=0$. We may further reduce to the case that $Y_v$ is supported at a single point: after replacing $K$ by a finite extension $L$ and replacing $S$ by the set of places of $L$ lying above $S$, we may assume that every point in the support of $Y_v$ is $K$-rational. Then we may write $Y_v=Y_{1,v}+\cdots Y_{r,v}$, where $Y_{1,v},\ldots, Y_{r,v}$ are closed subschemes supported at distinct $K$-rational points. Now the reduction follows from observing that $\beta(A,Y_{i,v})\geq \beta(A, Y_v)$ for all $i$, and $\lambda_{Y_v,v}(P)=\max_i \lambda_{Y_{i,v},v}(P)+O(1)$ for all $P\in X(K)\setminus Y_v$ (see Remark \ref{remzerodim}).

Let $N>0$ and let $v\in S$. We consider the two filtrations on $H^0(X,\cO(NA))$ given by order of vanishing along $D_v$ and $Y_v$, respectively:
\begin{align*}
H^0(X,\cO(NA))\supset H^0(X,\cO(NA-D_v))\supset H^0(X,\cO(NA-2D_v))\supset \cdots
\end{align*}
and
\begin{align*}
H^0(X,\cO(NA))\supset H^0(X,\cO(NA)\otimes \mathcal{I}_v)\supset H^0(X,\cO(NA)\otimes \mathcal{I}_v^2)\supset \cdots.
\end{align*}

By Lemma \ref{filtrationlemma}, there exists a basis $s_{1,v},\ldots, s_{h^0(NA),v}$ of $H^0(X,\cO(NA))$ adapted to both filtrations. It follows that 

\begin{align*}
\sum_{i=1}^{h^0(NA)} \mathrm{div}(s_{i,v})&\geq \sum_{i=1}^\infty  \left(h^0(NA-iD_v)-h^0(NA-(i+1)D_v)\right) iD_v\\
&\geq \left(\sum_{i=1}^\infty h^0(NA-iD_v)\right) D_v
\end{align*}
and similarly, 
$$\sum_{i=1}^{h^0(NA)} \mathrm{div}(s_{i,v}) \supset \left(\sum_{i=1}^\infty h^0(\cO(NA)\otimes \mathcal{I}_v^i)\right) Y_v. $$
as closed subschemes. Then we can write
\begin{align*}
\sum_{i=1}^{h^0(NA)} \mathrm{div}(s_{i,v})=\left(\sum_{i=1}^\infty h^0(NA-iD_v)\right) D_v+F_v
\end{align*}
for some effective divisor $F_v$. By Lemma \ref{orderlemma},
\begin{align*}
n_{Y_v}(F_v)&\geq n_{Y_v}\left(\sum_{i=1}^{h^0(NA)} \mathrm{div}(s_{i,v})\right)-\left(\sum_{i=1}^\infty h^0(NA - iD_v)\right)-1\\
&\geq  \left(\sum_{i=1}^\infty h^0(\cO(NA)\otimes \mathcal{I}_v^i)\right)-\left(\sum_{i=1}^\infty h^0(NA - iD_v)\right)-1.\notag
\end{align*}
It follows that
\begin{align*}
\lambda_{F_v,v}(P) &\geq n_{Y_v}(F_v)\lambda_{Y_v,v}(P)+O(1)\\
&\geq  \left(\sum_{i=1}^\infty h^0(\cO(NA)\otimes \mathcal{I}_v^i)-\sum_{i=1}^\infty h^0(NA - iD_v)-1\right)\lambda_{Y_v,v}(P)+O(1)
\end{align*}
for $P\in X(K)\setminus \Supp F_v$.

We now apply the Subspace Theorem in the form of \cite[Theorem 2.10]{RV20}, with the union of the bases $s_{1,v},\ldots, s_{h^0(NA),v}, v\in S,$ of $H^0(X,\CO(NA))$ constructed above;  we obtain that for $\varepsilon>0$, there exists a proper Zariski-closed subset $Z$ of $X$ such that, up to $O(1)$,
\begin{align*}
\sum_{v\in S} \left(\sum_{i=1}^\infty h^0(NA-iD_v)\right) \lambda_{D_v,v}(P)
+&\left(\sum_{i=1}^\infty h^0(\cO(NA)\otimes \mathcal{I}_v^i)-\sum_{i=1}^\infty h^0(NA - iD_v)-1\right)\lambda_{Y_v,v}(P) \\
&\leq\sum_{v\in S}\left(\sum_{i=1}^\infty h^0(NA-iD_v)\right) \lambda_{D_v,v}(P)+\lambda_{F_v,v}(P)\\
&\leq (h^0(NA)+\varepsilon)h_{NA}(P)\\
&\leq(Nh^0(NA)+N\varepsilon)h_{A}(P)
\end{align*}
for all $P\in X(K)\setminus Z$. Dividing by $Nh^0(NA)$ and taking $N$ sufficiently large gives the desired inequality. 
\end{proof}

We show that Theorem \ref{BetaMain} implies the Ru-Vojta inequality \cite{RV20} for surfaces (a similar, but more complicated argument, could be used to derive Theorem \ref{theoremRV} in the case of surfaces).

\begin{corollary}
\label{RVsurfaces}
    Let $X$ be a projective surface over a number field $K$, and let $D_1,\ldots, D_q$ be nonzero effective Cartier divisors intersecting properly on $X$. Let $\mathscr{L}$ be a big line bundle on $X$. Let $S$ be a finite set of places of $K$ and let $\varepsilon>0$. Then there exists a proper Zariski-closed subset $Z\subset X$ such that for all 
$P\in X(K)\setminus Z$, we have 
    \begin{equation*}
        \sum_{i=1}^q \beta(\mathscr{L},D_i) m_{D_i,S}(P) \leq (1+\varepsilon) h_{\mathscr{L}}(P).
    \end{equation*}
\end{corollary}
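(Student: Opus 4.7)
The plan is to deduce Corollary \ref{RVsurfaces} from the Main Theorem by, at each place $v \in S$, extracting from the sum $\sum_{i=1}^q \beta(\mathscr{L}, D_i) \lambda_{D_i, v}(P)$ a two-divisor contribution and fitting it into the left-hand side of Theorem \ref{BetaMain} via a regular chain. Concretely, for each $P$ and $v$ I would choose indices $i_v \neq j_v$ so that $\lambda_{D_{i_v}, v}(P)$ and $\lambda_{D_{j_v}, v}(P)$ are the two largest among $\{\lambda_{D_i,v}(P)\}_{i=1}^q$, and set $D_v := D_{i_v}$, $Y_v := D_{i_v} \cap D_{j_v}$. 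Since the $D_i$ intersect properly on the surface $X$, this is a regular chain of closed subschemes whenever $Y_v \neq \emptyset$.

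First I would show that each $D_k$ with $k \neq i_v, j_v$ contributes only $O(1)$ at $v$. Because any three of the $D_i$ have empty common intersection on $X$ (dimension reasons on a surface), $D_{i_v} \cap D_{j_v} \cap D_k = \emptyset$, and Theorem \ref{thm: Silverman}\eqref{subint} then forces $\min\{\lambda_{D_{i_v},v}(P), \lambda_{D_{j_v},v}(P), \lambda_{D_k,v}(P)\} = O(1)$; by the choice of $i_v, j_v$ this minimum is $\lambda_{D_k,v}(P)$. Thus
\[
\sum_{i=1}^q \beta(\mathscr{L}, D_i) \lambda_{D_i, v}(P) = \beta(\mathscr{L}, D_{i_v}) \lambda_{D_{i_v}, v}(P) + \beta(\mathscr{L}, D_{j_v}) \lambda_{D_{j_v}, v}(P) + O(1).
\]

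Next I would rewrite the $D_{j_v}$ term in the shape required by Theorem \ref{BetaMain}. From $\lambda_{D_{i_v},v}(P) \geq \lambda_{D_{j_v},v}(P)$ and Theorem \ref{thm: Silverman}\eqref{subint} we get $\lambda_{D_{j_v},v}(P) = \lambda_{Y_v,v}(P) + O(1)$, while Lemma \ref{betalcm} applied to the properly intersecting $D_{i_v}$ and $D_{j_v}$ gives $\beta(\mathscr{L}, Y_v) \geq \beta(\mathscr{L}, D_{i_v}) + \beta(\mathscr{L}, D_{j_v})$. Combining these with the nonnegativity of local heights yields
\[
\sum_{i=1}^q \beta(\mathscr{L}, D_i) \lambda_{D_i, v}(P) \leq \beta(\mathscr{L}, D_v) \lambda_{D_v, v}(P) + (\beta(\mathscr{L}, Y_v) - \beta(\mathscr{L}, D_v)) \lambda_{Y_v, v}(P) + O(1).
\]
To finish, since there are only finitely many tuples $(i_v, j_v)_{v \in S}$, I would stratify $X(K)$ by this choice, apply Theorem \ref{BetaMain} (with $\varepsilon$ replaced by a slightly smaller $\varepsilon'$) on each stratum, and let $Z$ be the union of the resulting proper Zariski-closed exceptional sets. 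Summing the display above over $v \in S$ yields the claimed bound up to an $O(1)$, which I would absorb into $(\varepsilon - \varepsilon') h_{\mathscr{L}}(P)$ by enlarging $Z$ to contain the bounded-height locus: writing $\mathscr{L} \sim A + E$ with $A$ ample and $E$ effective, outside $\Supp E$ the height $h_\mathscr{L}$ dominates $h_A$ up to $O(1)$, so only finitely many bad $P$ arise by Northcott.

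The main subtlety I foresee is the degenerate case where $D_{i_v} \cap D_{j_v} = \emptyset$: then the proposed $Y_v$ is empty and Theorem \ref{BetaMain} does not directly apply. But in that case $\lambda_{D_{j_v},v}(P) = O(1)$ already by the same application of Theorem \ref{thm: Silverman}\eqref{subint}, so the $D_{j_v}$ contribution is negligible; I would replace $Y_v$ by any nonempty zero-dimensional closed subscheme of $D_{i_v}$ (extending $K$ if needed) and use $\beta(\mathscr{L}, Y_v) \geq \beta(\mathscr{L}, D_v)$ together with $\lambda_{Y_v,v}(P) \geq 0$ to see that the inequality still holds. The case $q = 1$ is handled analogously.
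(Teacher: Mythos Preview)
Your proposal is correct and follows essentially the same route as the paper's proof: reduce to the two largest local heights at each $v$, replace the second via Theorem \ref{thm: Silverman}\eqref{subint} by the height of the intersection $D_{i_v}\cap D_{j_v}$, invoke Lemma \ref{betalcm} for the beta inequality, stratify over the finitely many choices, and apply Theorem \ref{BetaMain}. You are in fact slightly more careful than the paper on edge cases (the degenerate situation $D_{i_v}\cap D_{j_v}=\emptyset$ and the absorption of the $O(1)$ via Kodaira's lemma and Northcott), which the paper leaves implicit.
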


\begin{proof}
For any point $P\in X(K)$, we have
\begin{equation*}
\sum_{i=1}^q \beta(\mathscr{L},D_i) m_{D_i,S}(P) \leq 
\sum_{v\in S} 
\bigg(\beta(\mathscr{L},D_{P,1,v}) \lambda_{D_{P,1,v},v}(P)+\beta(\mathscr{L},D_{P,2,v}) \lambda_{D_{P,2,v},v}(P)+O(1) \bigg),
\end{equation*}
for some divisors $D_{P,1,v}$ and $D_{P,2,v}$ depending on $P$ and $v$ (but with the $O(1)$ term independent of $P$). Then by considering the finitely many choices for $D_{P,1,v}$ and $D_{P,2,v}$, it suffices to study the sum
\begin{align*}
\sum_{v\in S} \bigg(\beta(\mathscr{L},D_{1,v}) \lambda_{D_{1,v},v}(P)+\beta(\mathscr{L},D_{2,v}) \lambda_{D_{2,v},v}(P)\bigg),
\end{align*}
where for each $v\in S$ the divisors $D_{1,v}, D_{2,v}$ intersect properly. Again, by considering finitely many cases we may assume that $\lambda_{D_{1,v},v}(P)\geq \lambda_{D_{2,v},v}(P)$ for all $v\in S$ and thus
\begin{align*}
\lambda_{D_{2,v},v}(P)=\min\{\lambda_{D_{1,v},v}(P),\lambda_{D_{2,v},v}(P)\}=\lambda_{D_{1,v}\cap D_{2,v},v}(P)+O(1)
\end{align*}
for all $v\in S$.  By Lemma \ref{betalcm},
\begin{align*}
\beta(\mathscr{L},D_{1,v}\cap D_{2,v})\geq \beta(\mathscr{L},D_{1,v})+\beta(\mathscr{L},D_{2,v}).
\end{align*}
Therefore,  by Theorem \ref{BetaMain}, up to $O(1)$,
\begin{align*}
&\sum_{v\in S} \bigg(\beta(\mathscr{L},D_{1,v}) \lambda_{D_{1,v},v}(P)+\beta(\mathscr{L},D_{2,v}) \lambda_{D_{2,v},v}(P) \bigg)\\
=&\sum_{v\in S} \bigg( \beta(\mathscr{L},D_{1,v}) \lambda_{D_{1,v},v}(P)+\beta(\mathscr{L},D_{2,v}) \lambda_{D_{1,v}\cap D_{2,v},v}(P)\bigg)\\
\leq & \sum_{v\in S} \bigg(\beta(\mathscr{L},D_{1,v}) \lambda_{D_{1,v},v}(P)+(\beta(\mathscr{L},D_{1,v}\cap D_{2,v})- \beta(\mathscr{L},D_{1,v}))\lambda_{D_{1,v}\cap D_{2,v},v}(P) \bigg)\\
\leq &~ (1+\varepsilon) h_{\mathscr{L}}(P)
\end{align*}
for all $K$-rational points outside a proper Zariski-closed subset $Z$ of $X$. Finally, we may omit any $O(1)$ term by enlarging $Z$.
\end{proof}

From \cite{HL21}, if $X$ is a nonsingular projective variety of dimension $n$ and $D$ is a divisor on $X$, then $$\beta(A,D) \geq \frac{1}{n+1}\epsilon(A,D), $$ and in particular, on a surface $X$,
\begin{align}\label{bsineq}
\beta(A,D) \geq \frac{1}{3}\epsilon(A,D).
\end{align}

Using inequality (\ref{bsineq}) and Lemma \ref{betaseshadri}, we immediately derive from Theorem \ref{BetaMain} the inequality of Heier and the second author \cite{HL21} for surfaces, under a smoothness and regular chain assumption:

\begin{corollary}
Let $X$ be a nonsingular projective surface defined over a number field $K$.  Let $S$ be a finite set of places of $K$. For each $v\in S$, let $D_v\supset Y_v$ be a regular chain of nonempty closed subschemes of $X$. Let $A$ be an ample Cartier divisor on $X$, and let $\varepsilon > 0$. 
Then there exists a proper Zariski-closed subset $Z\subset X$ such that for all 
$P\in X(K)\setminus Z$, 
we have 
\begin{equation*}
    \begin{aligned}
        \sum_{v\in S}\bigg( \epsilon(A,D_v) 
        \lambda_{D_v,v}(P) + 
        \epsilon(A, Y_v)\lambda_{Y_v,v}(P) \bigg)
        < (3 + \varepsilon) h_A(P).
    \end{aligned}
\end{equation*}
\end{corollary}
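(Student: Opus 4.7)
The plan is to derive this corollary as an essentially immediate consequence of Theorem \ref{BetaMain}, combined with the two $\beta$-versus-$\epsilon$ inequalities already recorded in the paper.

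First, I would unpack the regular chain hypothesis on the nonsingular surface $X$. The existence of a local regular sequence $f_1, f_2 \in \CO_{X,P}$ with $D_v$ cut out by $(f_1)$ and $Y_v$ cut out by $(f_1, f_2)$ forces each $D_v$ to be an effective Cartier divisor and each $Y_v$ to be zero-dimensional; in particular $\codim Y_v = 2$, which is exactly the setting needed to invoke Lemma \ref{betaseshadri} on the surface $X$.

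Next, I would apply Theorem \ref{BetaMain} with $\varepsilon$ replaced by $\varepsilon/3$ to obtain a proper Zariski-closed subset $Z \subset X$ such that for all $P \in X(K) \setminus Z$,
\[
\sum_{v\in S}\Big(\beta(A, D_v)\lambda_{D_v,v}(P) + \big(\beta(A, Y_v) - \beta(A, D_v)\big)\lambda_{Y_v,v}(P)\Big) < \left(1 + \tfrac{\varepsilon}{3}\right)h_A(P).
\]
Then I would replace the $\beta$-coefficients by Seshadri constants using two inequalities. Inequality \eqref{bsineq} in the surface case gives
\[
\beta(A, D_v) \geq \tfrac{1}{3}\,\epsilon(A, D_v),
\]
while Lemma \ref{betaseshadri} applied with $r = 2$ to the inclusion $D_v \supset Y_v$ (valid since $\codim Y_v = 2$, $A$ is ample, and $X$ is normal) gives
\[
\beta(A, Y_v) - \beta(A, D_v) \geq \tfrac{1}{3}\,\epsilon(A, Y_v).
\]
Because the local height functions $\lambda_{D_v,v}$ and $\lambda_{Y_v,v}$ take values in $[0,+\infty]$ by Theorem \ref{thm: Silverman}, I can substitute these lower bounds into the previous display and multiply through by $3$ to conclude
\[
\sum_{v\in S}\Big(\epsilon(A, D_v)\lambda_{D_v,v}(P) + \epsilon(A, Y_v)\lambda_{Y_v,v}(P)\Big) < (3 + \varepsilon)\,h_A(P),
\]
as required.

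I do not anticipate any real obstacle here: the content of the corollary is already contained in Theorem \ref{BetaMain}, and Lemma \ref{betaseshadri} is precisely tailored to absorb the ``difference'' term $(\beta(A, Y_v) - \beta(A, D_v))\lambda_{Y_v,v}(P)$ into a Seshadri contribution for $Y_v$. The factor $3$ on the right-hand side is exactly $r+1$ for $r=2$, matching both the Seshadri bound for divisors and the $\frac{1}{r+1}\epsilon(A,Y_v)$ term supplied by Lemma \ref{betaseshadri}.
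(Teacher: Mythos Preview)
Your proposal is correct and matches the paper's own argument essentially verbatim: the paper states that the corollary follows immediately from Theorem \ref{BetaMain} together with inequality \eqref{bsineq} and Lemma \ref{betaseshadri}, which is exactly the reduction you carry out (with the additional care of verifying $\codim Y_v = 2$ and the nonnegativity of local heights).
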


Under more hypotheses, as an immediate consequence of Lemma \ref{betamultineq}, we can replace $\epsilon(A, Y_v)$ by a quantity depending only on self-intersection numbers and the multiplicity of $Y_v$ at a point. 

\begin{corollary}\label{mainmultineq}
Let $X$ be a normal projective surface defined over a number field $K$.  Let $S$ be a finite set of places of $K$. For each $v\in S$, let $D_v\supset Y_v$ be a regular chain of closed subschemes of $X$ such that $D_v$ is a positive multiple of an ample prime Cartier divisor, $Y_v$ is supported at a point $Q_v\in X(K)$, and 
\begin{align*}
\mu_v=\mu_{Q_v}(Y_v)<D_v^2.
\end{align*}
Let $A$ be an ample divisor on $X$, and let $\varepsilon > 0$. Then there exists a proper Zariski-closed subset $Z\subset X$ such that for all $P\in X(K)\setminus Z$, we have 
\begin{align*}
\sum_{v\in S} \left(3\beta(A,D_v)\lambda_{D_v}(P) + 2\sqrt{ \frac{A^2}{\mu_v}}\frac{\sqrt{\frac{D_v^2}{\mu_v}}}{1+\sqrt{\frac{D_v^2}{\mu_v}}} \lambda_{Y_v}(P)\right) < (3 + \varepsilon) h_A(P).
\end{align*}
In particular, there exists a proper Zariski-closed subset $Z\subset X$ such that for all $P\in X(K)\setminus Z$, we have 
\begin{align*}
\sum_{v\in S} \left(3\beta(A,D_v)\lambda_{D_v}(P) + \sqrt{ \frac{A^2}{\mu_v}} \lambda_{Y_v}(P)\right) < (3 + \varepsilon) h_A(P).
\end{align*}
\end{corollary}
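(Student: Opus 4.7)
The plan is to derive this corollary as a direct application of the Main Theorem \ref{BetaMain} combined with the lower bound on $\beta(A,Y_v) - \beta(A,D_v)$ provided by Lemma \ref{betamultineq}. No new techniques are needed; the work is essentially bookkeeping, since both ingredients are already in place and the hypotheses of the corollary are designed to match the hypotheses of Lemma \ref{betamultineq}.

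First I would apply Theorem \ref{BetaMain} to the regular chains $D_v \supset Y_v$, with $\varepsilon$ replaced by $\varepsilon/3$. This gives a proper Zariski-closed subset $Z \subset X$ such that for all $P \in X(K) \setminus Z$,
\begin{equation*}
\sum_{v \in S}\bigg(\beta(A,D_v)\lambda_{D_v,v}(P) + (\beta(A,Y_v) - \beta(A,D_v))\lambda_{Y_v,v}(P)\bigg) < (1 + \varepsilon/3) h_A(P).
\end{equation*}
Multiplying through by $3$ yields
\begin{equation*}
\sum_{v \in S}\bigg(3\beta(A,D_v)\lambda_{D_v,v}(P) + 3(\beta(A,Y_v) - \beta(A,D_v))\lambda_{Y_v,v}(P)\bigg) < (3 + \varepsilon) h_A(P).
\end{equation*}

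Next I would invoke Lemma \ref{betamultineq}, whose hypotheses ($X$ a normal projective surface, $D_v$ an ample positive multiple of a prime Cartier divisor, $Y_v$ supported at a single point with $\mu_v < D_v^2$, and $D_v \supset Y_v$ a regular chain) match exactly the assumptions of the corollary. This gives
\begin{equation*}
3(\beta(A,Y_v) - \beta(A,D_v)) \geq 2\sqrt{\frac{A^2}{\mu_v}} \cdot \frac{\sqrt{D_v^2/\mu_v}}{1 + \sqrt{D_v^2/\mu_v}}.
\end{equation*}
Since Silverman's local height functions are nonnegative (valued in $[0,+\infty]$ by Theorem \ref{thm: Silverman}), I can replace the coefficient $3(\beta(A,Y_v) - \beta(A,D_v))$ in the previous inequality by the smaller quantity on the right, preserving the $<$. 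This yields the first displayed inequality of the corollary.

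For the "in particular" statement, the hypothesis $\mu_v < D_v^2$ gives $\sqrt{D_v^2/\mu_v} > 1$, and hence
\begin{equation*}
\frac{\sqrt{D_v^2/\mu_v}}{1 + \sqrt{D_v^2/\mu_v}} > \frac{1}{2},
\end{equation*}
so $2\sqrt{A^2/\mu_v}\cdot\frac{\sqrt{D_v^2/\mu_v}}{1 + \sqrt{D_v^2/\mu_v}} > \sqrt{A^2/\mu_v}$; one more substitution, again using nonnegativity of $\lambda_{Y_v,v}$, gives the second inequality. There is no genuine obstacle in the argument: the analytic content lies entirely in Theorem \ref{BetaMain} and Lemma \ref{betamultineq}, and this corollary is simply the combination of the two, with the constant $3$ arising as in the passage from beta constants to Seshadri-style quantities on a surface (cf.\ inequality \eqref{bsineq}).
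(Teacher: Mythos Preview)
Your proposal is correct and follows exactly the approach the paper indicates: the corollary is stated as ``an immediate consequence of Lemma~\ref{betamultineq}'' applied to the inequality of Theorem~\ref{BetaMain}, which is precisely what you do. The scaling by $3$, the substitution via the lower bound of Lemma~\ref{betamultineq}, and the passage to the ``in particular'' statement using $\mu_v < D_v^2$ all match the intended argument.
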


\begin{remark}\label{Seshmultcomp}
For purposes of comparison, if $X$ is a nonsingular projective surface over $K$ and $Y$ is supported at a point $Q\in X(K)$, then we have the inequality \cite[Remark 2.4]{CEL}
\begin{align*}
\epsilon(A,Y)\leq \sqrt{ \frac{A^2}{\mu_Q(Y)}}.
\end{align*}
When $\mu_Q(Y)=1$ and $A^2$ is not a perfect square, this inequality has been conjectured to always be strict (more precisely, it has been conjectured that the Seshadri constant at a point is always rational).
\end{remark}

In the situation of Remark \ref{Seshmultcomp}, it follows that we have the inequalities
\begin{align*}
\beta(A,Y)\geq \frac{2}{3}\sqrt{\frac{A^2}{\mu_Q(Y)}} \geq \frac{2}{3}\epsilon(A,Y).
\end{align*}

We give a simple example where the inequalities are all strict.

\begin{example}
Let $X=\mathbb{P}^1\times \mathbb{P}^1$, let $A$ be a divisor of type $(1,1)$ on $X$, and let $Y=Q\in X(K)$ be a point (viewed as a closed subscheme with the reduced induced structure). Then elementary  computations give
\begin{align*}
\beta(A,Y)&=1,\\
\sqrt{ \frac{A^2}{\mu_Q(Y)}}&=\sqrt{2},\\
\epsilon(A,Y)&=1.
\end{align*}
Then we have strict inequalities
\begin{align*}
\beta(A,Y)>\frac{2}{3}\sqrt{ \frac{A^2}{\mu_Q(Y)}}>\frac{2}{3}\epsilon(A,Y).
\end{align*}
\end{example}

\section{Applications}\label{sec:applications}

In the remaining sections, we investigate some Diophantine applications of our main result Theorem \ref{BetaMain}. In Section \ref{sec: gcd} we prove, under suitable conditions, an inequality for the ``gcd height" $h_{D_i\cap D_j}(P)$, $i\neq j$, when $P$ is an $S$-integral point with respect to three properly intersecting numerically parallel divisors $D_1,D_2,D_3$ on a surface. This inequality may be viewed as complementary to the gcd inequalities of Bugeaud-Corvaja-Zannier \cite{BCZ}, Corvaja-Zannier \cite{CZ05}, and Wang-Yasufuku \cite{WY19} (for surfaces) who study the height $h_Q(P)$ (under the same integrality assumption on $P$) when $Q$ is {\it not} in the intersection of two of the divisors $D_i$. In Section \ref{sec: gcdapp}, we apply our gcd inequalities to study the equation $f(a^m, y) = b^n$, which was previously studied by Corvaja and Zannier \cite{CZ00}. In Section \ref{sec: nongeneral}, we study integral points on surfaces on the complement of three numerically parallel divisors with nonempty intersection. The results expand on earlier work of Corvaja and Zannier \cite{CZ06} in a similar setting. Finally, using the results on integral points of Section \ref{sec: nongeneral}, we study certain families of unit equations, proving a general result following work of Corvaja-Zannier \cite{CZ06, CZ10} and the second author \cite{Lev06}.

\subsection{Greatest Common Divisors on Surfaces}\label{sec: gcd}
 
In 2003, Bugeaud, Corvaja, and Zannier \cite{BCZ} initiated a new line of results with the following inequality involving greatest common divisors:

\begin{theorem}[Bugeaud-Corvaja-Zannier \cite{BCZ}]
\label{tBCZ}
Let $a,b\in \mathbb{Z}$ be multiplicatively independent integers.  Then for every $\varepsilon>0$, 
\begin{align}
\label{eBCZ}
\log \gcd (a^n-1,b^n-1)\leq \varepsilon n
\end{align}
for all but finitely many positive integers $n$.
\end{theorem}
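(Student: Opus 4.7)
My plan is to reformulate $\log\gcd(a^n-1, b^n-1)$ as a height on $\bP^2$ at a rational point, then apply Schmidt's Subspace Theorem. Put $Q = [1{:}1{:}1] \in \bP^2(\bQ)$, $P_n = [1{:}a^n{:}b^n]$, and $S = \{v_\infty\} \cup \{v_p : p \mid ab\}$. A direct local computation establishes
\[\log\gcd(a^n-1, b^n-1) = h_Q(P_n) + O(1),\]
since for $v = v_p \notin S$ one has $\lambda_{Q,v}(P_n) = \min(v_p(a^n-1), v_p(b^n-1)) \log p$, while for $v \in S$ the point $P_n$ stays bounded away from $Q$ (archimedeanly since $|a^n|, |b^n|$ diverge, and $v$-adically since one of $a^n, b^n$ is close to $0$ at primes of $ab$). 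Since $h(P_n) = n\max(\log|a|, \log|b|) + O(1)$, the theorem reduces to showing $h_Q(P_n) \leq \varepsilon\, h(P_n)$ for $n$ outside a finite set.

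The analytic core is Schmidt's Subspace Theorem. The point $Q$ is cut out by the two hyperplanes $H_3 = \{X_0 = X_1\}$ and $H_4 = \{X_0 = X_2\}$; together with the coordinate hyperplanes $H_0, H_1, H_2$ these form five hyperplanes of $\bP^2$ in general position. The $S$-integrality of $P_n$ with respect to $H_0+H_1+H_2$ ensures that $\lambda_{H_i,v}(P_n) = 0$ for $v \notin S$ and $i \in \{0,1,2\}$, concentrating the gcd contribution on $H_3, H_4$ at places $v \notin S$. I would follow the Corvaja--Zannier filtration technique: work in high Veronese degree $d$, consider $V_d = H^0(\bP^2, \cO(d))$ with the basis $\mathcal{B} = \{X_0^{d-\ell_1-\ell_2}(X_0-X_1)^{\ell_1}(X_0-X_2)^{\ell_2}\}$, which by Lemma~\ref{filtrationlemma} is adapted to both the filtration by order of vanishing at $Q$ and to the filtration by the coordinate divisor $X_0$. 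Applying Schmidt in the Veronese embedding $\bP^2 \hookrightarrow \bP^{\dim V_d - 1}$ to these basis elements as hyperplanes, and carefully accounting for the $S$-integrality, one extracts the bound $h_Q(P_n) \leq \varepsilon\, h(P_n)$ in the limit $d \to \infty$. The key numerical input is the beta-constant calculation
\[\beta(\cO(1), Q) \;=\; \lim_{d\to\infty} \frac{\sum_{L \in \mathcal{B}} (\ell_1 + \ell_2)}{\dim V_d \cdot d} \;=\; \frac{2}{3},\]
which, via the Ru--Vojta inequality (Theorem~\ref{theoremRV}) applied to the codimension-$2$ closed subscheme $Q$ at each place of $S$, gives the correct sharp constant.

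The Subspace Theorem also produces a proper Zariski-closed exceptional set $Z \subsetneq \bP^2$: a finite union of hyperplanes $\alpha_0 X_0 + \alpha_1 X_1 + \alpha_2 X_2 = 0$. The point $P_n = [1{:}a^n{:}b^n]$ lies on such a hyperplane iff $\alpha_0 + \alpha_1 a^n + \alpha_2 b^n = 0$; by the Skolem--Mahler--Lech theorem (or equivalently the three-term $S$-unit equation theorem), the multiplicative independence of $a$ and $b$ forces this to have only finitely many solutions $n$. Hence only finitely many $n$ are exceptional, and the gcd bound holds for all remaining $n$.

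The main obstacle is the passage from the naive $O(n)$ bound to the sharp $\varepsilon n$ bound. Using the product formula together with $\min \leq \tfrac{1}{2}(\lambda_{H_3,v} + \lambda_{H_4,v})$ and $\sum_v \lambda_{H_i,v}(P_n) = h(P_n) + O(1)$ only yields $h_Q(P_n) \leq \tfrac{n}{2}(\log|a|+\log|b|) + O(\log n)$, a merely linear bound. The improvement to $o(n)$ genuinely requires the full strength of Schmidt's theorem in the Veronese limit, where the delicate interplay between the $S$-integrality structure at the coordinate divisors and the vanishing at $Q$ (governed by $\beta(\cO(1), Q) = 2/3$) produces the desired sharp inequality; equivalently, it is the input $\beta(\cO(1), Q) > 1/2$ that supplies the strict improvement.
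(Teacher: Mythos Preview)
The paper does not supply a proof of this statement; Theorem~\ref{tBCZ} is quoted from \cite{BCZ} purely as historical motivation for the paper's own gcd results (Theorem~\ref{generalPos} and Corollary~\ref{generalPosCor}), so there is no in-paper argument to compare against.

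Your outline is essentially the original Bugeaud--Corvaja--Zannier strategy (equivalently, the case $X=\bP^2$, $D_i=\{x_i=0\}$, $Y=\{[1{:}1{:}1]\}$ of the Wang--Yasufuku result stated here as Theorem~\ref{thm: WY}), and the main ideas are sound: interpret the gcd as $h_Q(P_n)$, exploit the $(H_0+H_1+H_2,S)$-integrality of $P_n$, and run the Corvaja--Zannier filtration in degree $d\to\infty$. There is one genuine slip in your treatment of the exceptional locus. Once you pass to the degree-$d$ Veronese and invoke Schmidt there, the exceptional hyperplanes of $\bP^{\dim V_d-1}$ pull back to degree-$d$ curves in $\bP^2$, not to lines $\alpha_0X_0+\alpha_1X_1+\alpha_2X_2=0$ as you wrote. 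To finish you must show that for any fixed plane curve $\sum c_{ij}X_0^{d-i-j}X_1^iX_2^j=0$ the relation $\sum c_{ij}a^{in}b^{jn}=0$ holds for only finitely many $n$. Since multiplicative independence of $a,b$ makes the numbers $a^ib^j$ pairwise distinct, this does follow from Skolem--Mahler--Lech combined with a Vandermonde argument on any putative infinite arithmetic progression of zeros, but it is not the three-term case you invoke. With this correction your argument goes through.
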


The inequality \eqref{eBCZ} was subsequently generalized by Corvaja and Zannier \cite{CZ05}, allowing $a^n$ and $b^n$ to be replaced by elements $u$ and $v$, respectively, of a fixed finitely generated subgroup of $\Qbar^*$, and replacing  $u-1$ and $v-1$ by more general pairs of polynomials in $u$ and $v$. Silverman \cite{Sil05} interpreted these results in terms of heights and as a special case of Vojta's Conjecture.  The second author \cite{Lev19} further generalized these inequalities to multivariate polynomials, and Wang and Yasufuku proved the following general version of these results (see work of the first and second author \cite{HL22} for an even more general version):

\begin{theorem}[Wang-Yasufuku \cite{WY19}]\label{thm: WY}
Let $X$ be a Cohen–Macaulay variety of dimension $n$ defined over a number field $K$, and let $S$ be a finite set of places of $K$. Let $D_1,\ldots, D_{n+1}$ be effective Cartier divisors defined over $K$ and in general position. Suppose that there exists an ample Cartier divisor $A$ on $X$ and positive integers $d_1,\ldots, d_{n+1}$ such that $D_i\equiv d_iA$, $i=1,\ldots n+1$. Let $Y$ be a closed subscheme of $X$ of codimension at least $2$ that does not contain any point of the set
\begin{align}
\label{WYcond}
\bigcup_{i=1}^{n+1}\bigcap_{j\neq i}\mathrm{Supp}D_j.
\end{align}
Let $\varepsilon > 0$. Then there exists a proper Zariski-closed subset $Z$ of $X$ such that for any set $R$ of $(\sum_{i=1}^nD_i,S)$-integral points in $X(K)$, we have
\begin{align*}
h_Y (P) \le  \varepsilon h_A(P)
\end{align*}
for all points $P \in R \setminus Z$. 
\end{theorem}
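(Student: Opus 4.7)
The plan is to deduce Wang--Yasufuku's bound by combining Silverman's intersection property for heights of closed subschemes with Heier--Levin's Theorem \ref{theoremHL}, applied to a well-chosen collection of closed subschemes at each place of $S$, and then to convert the resulting $S$-local inequality into one involving $h_A$ via the integrality hypothesis on $P$.

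The geometric input is the observation that for each $i \in \{1,\dots,n+1\}$ the emptiness condition $\Supp Y \cap \bigcap_{j\neq i}\Supp D_j = \emptyset$ translates, via Theorem \ref{thm: Silverman}\eqref{subint} applied iteratively, into the uniform bound
\[
\min\bigl\{\lambda_{Y,v}(P),\ \lambda_{D_j,v}(P) : j\neq i\bigr\} = O(1)
\]
for all $v \in M_K$ and $P \in X(K)\setminus \Supp Y$. In particular, for a suitable constant $C$, whenever $\lambda_{Y,v}(P) > C$ one has $\lambda_{D_j,v}(P) \leq C$ for at least two indices $j$ (otherwise choosing $i$ to be the unique small-height index contradicts the displayed bound).

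For each $v\in S$ let $\hat{i}_v \in \{1,\dots,n+1\}$ be an index minimizing $\lambda_{D_{\hat{i}_v},v}(P)$; by the previous observation, $\lambda_{D_{\hat{i}_v},v}(P) \leq C$ at every $v$ where $\lambda_{Y,v}(P) > C$. Partitioning into the finitely many possibilities for the map $v \mapsto \hat{i}_v$ and enlarging $Z$ accordingly, one reduces to the case in which this map is fixed. The collection $\{Y\}\cup\{D_j : j\neq \hat{i}_v\}$ is in general position on $X$: subintersections of the $D_j$'s inherit general position from the hypothesis, subintersections involving $Y$ gain the additional codimension from $\codim Y \geq 2$, and the top intersection $Y \cap \bigcap_{j\neq \hat{i}_v}D_j$ is empty by hypothesis (discharging the final codimension constraint of Definition \ref{generalpositiondef}). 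Theorem \ref{theoremHL}, together with the identities $\epsilon(A,D_j) = 1/d_j$ coming from $D_j \equiv d_j A$ and the ampleness of $A$, then gives outside a proper Zariski-closed subset
\[
\epsilon(A,Y)\,m_{Y,S}(P) + \sum_{v\in S}\sum_{j\neq \hat{i}_v}\frac{\lambda_{D_j,v}(P)}{d_j} < (n+1+\varepsilon')\, h_A(P) + O(1).
\]
Using $(\sum_j D_j, S)$-integrality, one has $\sum_{v\in S}\lambda_{D_j,v}(P) = d_j h_A(P) + O(1)$, so $\sum_{v\in S}\sum_{j=1}^{n+1}\lambda_{D_j,v}(P)/d_j = (n+1)h_A(P) + O(1)$. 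Subtracting the $\hat{i}_v$ contributions and rearranging yields
\[
\epsilon(A,Y)\,m_{Y,S}(P) < \varepsilon'\, h_A(P) + \sum_{v\in S}\frac{\lambda_{D_{\hat{i}_v},v}(P)}{d_{\hat{i}_v}} + O(1),
\]
and the residual sum is absorbed into $O(1)$ via a dichotomy: at places where $\lambda_{Y,v}(P)>C$ the choice of $\hat{i}_v$ forces $\lambda_{D_{\hat{i}_v},v}(P) \leq C$, while at the remaining places the local contribution to $m_{Y,S}(P)$ is itself $O(1)$. This produces the $S$-local bound $m_{Y,S}(P) \leq \varepsilon\, h_A(P) + O(1)$.

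The main obstacle is promoting this $S$-local bound to the global bound on $h_Y(P)$, i.e.\ controlling $\sum_{v\notin S}\lambda_{Y,v}(P)$. The inequality of the first step holds at every place but does not give a pointwise estimate of $\lambda_{Y,v}(P)$ in terms of the $\lambda_{D_j,v}(P)$'s, and the integrality hypothesis controls only the $\lambda_{D_j,v}(P)$'s outside $S$. The cleanest route is to pass to the blow-up $\pi\colon\tilde X \to X$ along $Y$, using $h_Y(P) = h_E(\tilde P) + O(1)$ for the exceptional divisor $E$ of $\pi$, and to reapply the above argument on $\tilde X$ with $E$ and the strict transforms of the $D_j$'s; verifying that $\tilde X$ still carries the necessary Cohen--Macaulay and general-position structure, and that the numerical relation between $E$ and $\pi^* A$ is compatible with the required Seshadri-constant bounds, is the technical heart, and reduces the problem to bounding the height of an honest Cartier divisor, where $S$-integrality of $\tilde P$ against a suitable divisor on $\tilde X$ controls the contribution at every place.
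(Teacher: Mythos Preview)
The paper does not prove this statement: Theorem~\ref{thm: WY} is quoted as a known result of Wang and Yasufuku~\cite{WY19}, with no proof supplied. There is therefore no ``paper's own proof'' to compare against.

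On the substance of your attempt: the first half, bounding $m_{Y,S}(P)$ via Theorem~\ref{theoremHL}, is essentially sound. Two remarks. First, your justification of general position for $\{Y\}\cup\{D_j:j\neq\hat{i}_v\}$ is too brief: the claim that ``subintersections involving $Y$ gain the additional codimension from $\codim Y\geq 2$'' is not automatic for intermediate $|I|$. What actually makes it work is ampleness: if some component $Z$ of $Y\cap D_{j_1}\cap\cdots\cap D_{j_k}$ had $\codim Z=k$, then $Z$ would be a full component of $D_{j_1}\cap\cdots\cap D_{j_k}$, and intersecting with the remaining ample $D_j$'s (positive intersection number) would produce a point of $Y\cap\bigcap_{j\neq\hat{i}_v}D_j$, contradicting the hypothesis. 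Second, your dichotomy for the residual sum is unnecessary: since $\bigcap_j D_j=\emptyset$ by general position, $\min_j\lambda_{D_j,v}(P)=O(1)$ at \emph{every} place, so $\lambda_{D_{\hat{i}_v},v}(P)=O(1)$ uniformly.

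The genuine gap is exactly where you locate it: promoting $m_{Y,S}(P)\leq\varepsilon h_A(P)$ to $h_Y(P)\leq\varepsilon h_A(P)$. Nothing in the $(\sum_i D_i,S)$-integrality hypothesis controls $\sum_{v\notin S}\lambda_{Y,v}(P)$, since $\Supp Y$ need not lie in $\Supp\sum_i D_i$. Your proposed fix---blow up along $Y$ and rerun the argument on $\tilde X$---is only a sketch, and it is not clear it closes the gap: on $\tilde X$ the pullback $\pi^*A$ is big but not ample, the Cohen--Macaulay hypothesis may fail, and the integrality of $\tilde P$ with respect to $\sum_i\pi^*D_i$ still says nothing about $\lambda_{E,v}(\tilde P)$ for $v\notin S$. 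Wang--Yasufuku's own argument does not proceed by quoting Theorem~\ref{theoremHL} as a black box; it builds a filtration (in the Corvaja--Zannier/Autissier style) that directly yields a height inequality in which the $h_Y$ term appears globally, so the local-to-global promotion is baked into the construction rather than attempted after the fact.
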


When $X$ is a surface, in Theorem \ref{thm: WY} one may reduce to the case that $Y=Q$ is a point, and the condition \eqref{WYcond} is simply the requirement that $Q\not\in \cup_{i\neq j}D_i\cap D_j$. The main result of this section proves an inequality as in Theorem \ref{thm: WY}, but under the complementary condition $Q\in \cup_{i\neq j}D_i\cap D_j$, along with some mild additional hypotheses (necessary to exclude the case of lines in $\mathbb{P}^2$ where the analogous statement is false (Example \ref{gcdcounter})).

\begin{theorem}\label{generalPos}
Let $D_1,D_2,D_3$ be effective divisors intersecting properly on a projective surface $X$, all defined over a number field $K$.  Suppose that there exist positive integers $a_1,a_2,a_3$ such that $a_1D_1, a_2D_2, a_3D_3$ are all numerically equivalent to an ample divisor $D$.  Suppose that for some $i_0,j_0\in \{1,2,3\},~ i_0\neq j_0$, and for all $Q\in X(\overline{K})$, 
\begin{align}\label{beta23cond}
\beta(D,(a_{i_0}D_{i_0}\cap a_{j_0}D_{j_0})_Q)  > \frac{2}{3},
\end{align}
with $(a_{i_0}D_{i_0}\cap a_{j_0}D_{j_0})_Q$ as in Definition \ref{defzerodim}. Let $S$ be a finite set of places of $K$ containing all the archimedean ones and let $\varepsilon > 0$. Then  there exists a proper Zariski-closed subset $Z\subset X$ such that for any set $R\subset X(K)$ of $(D_1 + D_2 + D_3, S)$-integral points and all but finitely many points $P\in R\setminus Z$, we have 
\begin{align*}
h_{D_{i_0}\cap D_{j_0}}(P) 
        \le \varepsilon h_D(P). 
\end{align*}
\end{theorem}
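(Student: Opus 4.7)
The plan is to apply Theorem \ref{BetaMain} to a regular chain $D_v \supset Y_v$ chosen to exploit simultaneously the strict hypothesis $\beta(D, (a_{i_0}D_{i_0} \cap a_{j_0}D_{j_0})_Q) > 2/3$ \emph{and} the ``free'' lower bound $\beta(D, Y_v) \geq 2/3$ coming from Lemma \ref{betalcm} via the proper intersection of $a_{i_v}D_{i_v}$ and $a_{j_v}D_{j_v}$. The key observation is that the $2/3$ piece is exactly what the Ru--Vojta-tight inequality absorbs for $(D_1+D_2+D_3, S)$-integral $P$, so only the strict excess $\beta(D, Y_v) - 2/3 > 0$ on the places where $P$ is closest to $D_{i_0}$ and $D_{j_0}$ survives, and this excess is what forces $h_{D_{i_0} \cap D_{j_0}}(P)$ to be small.

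After a harmless finite extension of $K$, assume all points of $(D_i \cap D_j)(\overline{K})$ are $K$-rational. For each $P \in R$ and each $v \in S$, pick an ordering $(i_v, j_v, k_v)$ of $\{1,2,3\}$ realizing $a_{i_v}\lambda_{D_{i_v},v}(P) \geq a_{j_v}\lambda_{D_{j_v},v}(P) \geq a_{k_v}\lambda_{D_{k_v},v}(P)$ (the third is $O(1)$ by proper intersection), and a point $Q_v$ as in Remark \ref{remzerodim} so that $\lambda_{Y_v,v}(P) = \lambda_{a_{i_v}D_{i_v} \cap a_{j_v}D_{j_v}, v}(P) + O(1)$, where $D_v := a_{i_v}D_{i_v}$ and $Y_v := (a_{i_v}D_{i_v} \cap a_{j_v}D_{j_v})_{Q_v}$. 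Partitioning $R$ by the finite list of tuples $((i_v, j_v, Q_v))_{v \in S}$ lets me fix these choices. The chain $D_v \supset Y_v$ is regular: locally at $Q_v$ the ideal of $Y_v$ is $(f_{i_v}^{a_{i_v}}, f_{j_v}^{a_{j_v}})$, a regular sequence by \cite[Thm.~26]{Matsumura}. Numerical invariance of $\beta$ gives $\beta(D, D_v) = \beta(D, D) = 1/3$, and Lemma \ref{betalcm} applied to the properly intersecting pair $(a_{i_v}D_{i_v}, a_{j_v}D_{j_v})$, each containing $Y_v$, yields $\beta(D, Y_v) \geq 2/3$. On $S_{i_0 j_0} := \{v \in S : \{i_v, j_v\} = \{i_0, j_0\}\}$, where $Y_v = (a_{i_0}D_{i_0} \cap a_{j_0}D_{j_0})_{Q_v}$, the hypothesis gives $\beta(D, Y_v) \geq 2/3 + \eta$ uniformly in $P$ for some $\eta > 0$ (finitely many possible $Q_v$).

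Applying Theorem \ref{BetaMain} with parameter $\delta > 0$ and rewriting $\beta(D, Y_v) - 1/3 = 1/3 + (\beta(D, Y_v) - 2/3)$, together with $\lambda_{D_v, v}(P) = a_{i_v}\lambda_{D_{i_v}, v}(P) + O(1)$ and $\lambda_{Y_v, v}(P) = a_{j_v}\lambda_{D_{j_v}, v}(P) + O(1)$ (via Theorem \ref{thm: Silverman}(4) and the ordering), one obtains
\begin{align*}
\tfrac{1}{3}\sum_{v \in S}\bigl(a_{i_v}\lambda_{D_{i_v},v}(P) + a_{j_v}\lambda_{D_{j_v},v}(P)\bigr) + \sum_{v \in S}\bigl(\beta(D,Y_v) - \tfrac{2}{3}\bigr)\lambda_{Y_v,v}(P) < (1+\delta)h_D(P).
\end{align*}
For $(D_1+D_2+D_3, S)$-integral $P$, proper intersection yields $\sum_{i=1}^{3} a_i\lambda_{D_i,v}(P) = a_{i_v}\lambda_{D_{i_v},v}(P) + a_{j_v}\lambda_{D_{j_v},v}(P) + O(1)$, and summation over $v \in S$ with $a_i h_{D_i}(P) = h_D(P) + O(\delta' h_D(P))$ (numerical equivalence, Theorem \ref{thm: Silverman}(7)) collapses the first sum to $3h_D(P) + O(\delta' h_D(P))$. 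Cancelling $h_D(P)$ leaves
\begin{align*}
\sum_{v \in S}\bigl(\beta(D,Y_v) - \tfrac{2}{3}\bigr)\lambda_{Y_v,v}(P) \leq (\delta + O(\delta'))h_D(P) + O(1).
\end{align*}

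Finally, restricting to $v \in S_{i_0 j_0}$ (other summands being nonnegative), using $\beta(D, Y_v) - 2/3 \geq \eta$ there together with $\lambda_{Y_v,v}(P) \geq \min(a_{i_0}, a_{j_0})\lambda_{D_{i_0}\cap D_{j_0}, v}(P) + O(1)$ on $S_{i_0 j_0}$, and noting $\lambda_{D_{i_0}\cap D_{j_0}, v}(P) = O(1)$ for $v \notin S_{i_0 j_0}$ (proper intersection forces one of $\lambda_{D_{i_0},v}, \lambda_{D_{j_0},v}$ to be $O(1)$) and for $v \notin S$ (integrality), I get $\eta \min(a_{i_0}, a_{j_0}) h_{D_{i_0}\cap D_{j_0}}(P) \leq (\delta + O(\delta'))h_D(P) + O(1)$. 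Choosing $\delta, \delta'$ small relative to the prescribed $\varepsilon > 0$ and discarding the Northcott-finite set where $h_D$ is bounded (since $D$ is ample) yields $h_{D_{i_0}\cap D_{j_0}}(P) \leq \varepsilon h_D(P)$. The main technical point is the synergy between Lemma \ref{betalcm} and the hypothesis: each alone would leave an $O(h_D)$ gap, while together they deliver the required $\varepsilon h_D$ decay.
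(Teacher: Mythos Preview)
Your proof is correct and follows essentially the same approach as the paper: both apply Theorem \ref{BetaMain} to the regular chain $a_{i_v}D_{i_v}\supset (a_{i_v}D_{i_v}\cap a_{j_v}D_{j_v})_{Q_v}$, use $\beta(D,a_{i_v}D_{i_v})=1/3$ together with Lemma \ref{betalcm} to get $\beta(D,Y_v)\geq 2/3$ (with strict inequality by hypothesis when $\{i_v,j_v\}=\{i_0,j_0\}$), and subtract from this the integrality lower bound $\sum_{v\in S}\sum_i a_i\lambda_{D_i,v}(P)\geq (3-\varepsilon)h_D(P)$ to isolate the surviving term $\eta\cdot m_{D_{i_0}\cap D_{j_0},S}(P)$. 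Your $\eta$ is the paper's $\gamma/3$, and the endgame via $\min(a_{i_0},a_{j_0})\lambda_{D_{i_0}\cap D_{j_0},v}\leq \lambda_{a_{i_0}D_{i_0}\cap a_{j_0}D_{j_0},v}$ matches as well.
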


The condition \eqref{beta23cond} can be replaced by the simpler condition that $D_{i_0}\cap D_{j_0}$ contains more than one point:

\begin{corollary}
\label{generalPosCor}
Suppose that the same hypotheses as in Theorem \ref{generalPos} hold, except that \eqref{beta23cond} is replaced by the assumption that $D_{i_0}\cap D_{j_0}$ contains more than one point (over $\overline{K})$.  Then  there exists a proper Zariski-closed subset $Z\subset X$ such that for any set $R\subset X(K)$ of $(D_1 + D_2 + D_3, S)$-integral points and all but finitely many points $P\in R\setminus Z$, we have 
\begin{align*}
h_{D_{i_0}\cap D_{j_0}}(P) 
        \le \varepsilon h_D(P). 
\end{align*}
\end{corollary}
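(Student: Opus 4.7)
The plan is to deduce Corollary~\ref{generalPosCor} directly from Theorem~\ref{generalPos} by verifying the beta hypothesis \eqref{beta23cond} under the weaker assumption that $D_{i_0} \cap D_{j_0}$ contains more than one geometric point. Writing $Y := a_{i_0}D_{i_0} \cap a_{j_0}D_{j_0}$ (a zero-dimensional closed subscheme, since $D_1, D_2, D_3$ intersect properly), the task reduces to showing $\beta(D, Y_Q) > 2/3$ at every point $Q$ in the support of $Y$.

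First I would compute the multiplicity $\mu_Q(Y_Q)$. Near any point $Q$ of $D_{i_0} \cap D_{j_0}$ we may write $D_{i_0} = V(f)$ and $D_{j_0} = V(g)$ for a regular sequence $(f,g)$ in $\cO_{X,Q}$, so $Y_Q$ is locally cut out by $(f^{a_{i_0}}, g^{a_{j_0}})$. Standard Hilbert--Samuel theory for powers of a regular sequence (or a direct application of Lemma~\ref{ideallemma} combined with associativity of multiplicities) then gives $\mu_Q(Y_Q) = a_{i_0} a_{j_0} (D_{i_0} . D_{j_0})_Q$. Summing over all points in the support of $Y$ and using the numerical equivalences $a_{i_0} D_{i_0} \equiv D \equiv a_{j_0} D_{j_0}$ yields
\[
\sum_Q \mu_Q(Y_Q) \;=\; (a_{i_0} D_{i_0}) . (a_{j_0} D_{j_0}) \;=\; D^2 \;=\; \vol(D),
\]
where the last equality uses ampleness of $D$. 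If $D_{i_0} \cap D_{j_0}$ contains at least two points, then since every $\mu_Q(Y_Q)$ is a positive integer, each individual term is strictly less than $D^2$.

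Finally, applying Lemma~\ref{betamultsimple} (with $r = 2$) to $Y_Q$ on the surface $X$ gives
\[
\beta(D, Y_Q) \;\geq\; \tfrac{2}{3}\sqrt{\tfrac{\vol(D)}{\mu_Q(Y_Q)}} \;>\; \tfrac{2}{3},
\]
which is exactly the hypothesis \eqref{beta23cond}, so Theorem~\ref{generalPos} immediately yields the corollary. I do not expect any serious obstacle here: the argument is essentially a short bookkeeping exercise combining a Bezout-style identity with the lower bound of Lemma~\ref{betamultsimple}. The only mildly nontrivial ingredient is the identification $\mu_Q(Y_Q) = a_{i_0}a_{j_0}(D_{i_0}.D_{j_0})_Q$ for ideals generated by powers of a regular sequence, which is standard but worth spelling out carefully.
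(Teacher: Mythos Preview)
Your proposal is correct and follows essentially the same route as the paper: identify $\mu_Q(Y_Q)$ with the local intersection number $(a_{i_0}D_{i_0}.a_{j_0}D_{j_0})_Q$ (the paper cites Fulton for this), use that these sum to $D^2$ so each is strictly less than $D^2$ when there is more than one point, and then invoke Lemma~\ref{betamultsimple} to obtain $\beta(D,Y_Q)\geq \tfrac{2}{3}\sqrt{D^2/\mu_Q}>\tfrac{2}{3}$, reducing to Theorem~\ref{generalPos}.
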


\begin{proof}[Proof of Theorem \ref{generalPos}]
After replacing $K$ by a finite extension, we may assume that every point in the support of $D_i\cap D_j$, $i\neq j$, is $K$-rational.

We note that since $a_iD_i\equiv D$, we have $\beta(D,a_iD_i)=\frac{1}{3}$ for all $i$. Moreover, by Lemma \ref{betalcm},
for all $Q\in X(\overline{K})$ and $i\neq j$, we have
\begin{align}
\label{23ineq}
\beta(D,(a_iD_i\cap a_jD_j)_Q)\geq \beta(D,a_iD_i\cap a_jD_j)\geq \beta(D,a_iD_i)+\beta(D,a_jD_j)\geq \frac{2}{3}.
\end{align}
We let
\begin{align*}
\gamma=\min_{Q\in X(\kbar)}3\beta(D,(a_{i_0}D_{i_0}\cap a_{j_0}D_{j_0})_Q) -2.
\end{align*}
By assumption, $\gamma>0$. By Remark \ref{remzerodim}, for any point $P\in X(K), v\in S, i\neq j$, there is a point $Q\in \Supp (a_iD_i\cap a_jD_j)$ (depending on $P$, $v$, and $i$ and $j$) such that
\begin{align}\label{pointeq}
\lambda_{a_iD_i\cap a_jD_j,v}(P)=\lambda_{(a_iD_i\cap a_jD_j)_Q,v}(P)+O(1)
\end{align}
where the constant in the $O(1)$ is independent of $P$.

For $v\in S$, let $i_v,j_v\in \{1,2,3\}$, $i_v\neq j_v$. Let $\varepsilon>0$ and let
\begin{align*}
\gamma_v=
\begin{cases}
\gamma & \text{if }\{i_v,j_v\}=\{i_0,j_0\},\\
0 & \text{otherwise}.
\end{cases}
\end{align*}
By \eqref{23ineq}, \eqref{pointeq}, the definitions of $\gamma$ and $\gamma_v$, and Theorem \ref{BetaMain},  there exists a proper Zariski-closed subset $Z$ of $X$ such that up to $O(1)$ (and after multiplying by $3$)
\begin{align}
\label{DiDjineq}
\begin{split}
&~~~\sum_{v\in S} \bigg(\lambda_{a_{i_v}D_{i_v},v}(P)+(1+\gamma_v)\lambda_{a_{i_v}D_{i_v}\cap a_{j_v}D_{j_v},v}(P) \bigg)\\
&=\sum_{v\in S} \bigg(\lambda_{a_{i_v}D_{i_v},v}(P)+(1+\gamma_v)\max_Q\lambda_{(a_{i_v}D_{i_v}\cap a_{j_v}D_{j_v)_Q},v}(P)\bigg)\\
&\leq \sum_{v\in S} \bigg(\lambda_{a_{i_v}D_{i_v},v}(P)+(3\min_{Q}\beta(D,(a_{i_v}D_{i_v}\cap a_{j_v}D_{j_v})_Q)-1) \max_Q\lambda_{(a_{i_v}D_{i_v}\cap a_{j_v}D_{j_v})_Q,v}(P)\bigg)\\
&\leq (3+\varepsilon)h_D(P)
\end{split}
\end{align}
for all $P\in X(K)\setminus Z$. As there are only finitely many choices of $i_v,j_v$, we may find such a $Z$ that works for all choices of $i_v, j_v$, $v\in S$ (with $i_v\neq j_v$).

Next we note that since $D_1,D_2,D_3$ intersect properly, we have $D_1\cap D_2\cap D_3=\emptyset$, and by Theorem \ref{thm: Silverman}, for any $v\in S$, 
$$\min\{\lambda_{a_1D_1,v}(P),\lambda_{a_2D_2,v}(P),\lambda_{a_3D_3,v}(P)\}=O(1)$$
for all $P\in X(K)$. Let $R$ be a set of $(D_1 + D_2 + D_3, S)$-integral points. By definition and elementary properties of heights, for any $\varepsilon>0$,
\begin{align*}
\sum_{v\in S} \left(\lambda_{a_1D_{1},v}(P)+\lambda_{a_{2}D_{2},v}(P)+\lambda_{a_{3}D_{3},v}(P)\right)&=h_{a_1D_1}(P)+h_{a_2D_2}(P)+h_{a_3D_3}(P)+O(1)\\
&\geq (3-\varepsilon)h_D(P)+O(1),
\end{align*}
where the $O(1)$ possibly depends on $R$ (but not $P$).

Let $P\in R$. For $v\in S$, let $\{i_v,j_v,k_v\}=\{1,2,3\}$ be such that

$$
\lambda_{a_{i_v}D_{i_v},v}(P)\geq \lambda_{a_{j_v}D_{j_v},v}(P)\geq \lambda_{a_{k_v}D_{k_v},v}(P).
$$
Then
\begin{align*}
&~~~\sum_{v\in S} \left(\lambda_{a_{i_v}D_{i_v},v}(P)+\lambda_{a_{j_v}D_{j_v},v}(P)+\lambda_{a_{k_v}D_{k_v},v}(P)\right)\\
&=\sum_{v\in S} \left(\lambda_{a_{i_v}D_{i_v},v}(P)+\lambda_{a_{j_v}D_{j_v},v}(P)\right)+O(1)\\
&=\sum_{v\in S} \left(\lambda_{a_{i_v}D_{i_v},v}(P)+\min\{\lambda_{a_{i_v}D_{i_v},v}(P),\lambda_{a_{j_v}D_{j_v},v}(P)\}\right)+O(1)\\
&=\sum_{v\in S} \left(\lambda_{a_{i_v}D_{i_v},v}(P)+\lambda_{a_{i_v}D_{i_v}\cap a_{j_v}D_{j_v},v}(P)\right)+O(1)\\
&\geq (3-\varepsilon) h_D(P)+O(1).
\end{align*}

Noting that if $\{i_0,j_0\}\neq \{i_v,j_v\}$ then
\begin{align*} 
\lambda_{a_{i_0}D_{i_0}\cap a_{j_0}D_{j_0},v}(P)=O(1)
\end{align*}
and substituting into \eqref{DiDjineq}, we find that if $P\not\in Z$, then
\begin{align*}
(3-\varepsilon)h_D(P)+\gamma\sum_{v\in S}\lambda_{a_{i_0}D_{i_0}\cap a_{j_0}D_{j_0},v}(P)
&\leq \sum_{v\in S}\lambda_{a_{i_v}D_{i_v},v}(P)+(1+\gamma_v)\lambda_{a_{i_v}D_{i_v}\cap a_{j_v}D_{j_v},v}(P)+O(1)\\
&<(3+\varepsilon)h_D(P)+O(1).
\end{align*}

Therefore, if $P\not\in Z$,
\begin{align*}
\sum_{v\in S}\lambda_{a_{i_0}D_{i_0}\cap a_{j_0}D_{j_0},v}(P)<\frac{2\varepsilon}{\gamma}h_D(P)+O(1).
\end{align*}
Since $\min\{a_{i_0},a_{j_0}\}\min\{\lambda_{D_{i_0},v},\lambda_{D_{j_0},v}\}\leq \lambda_{a_{i_0}D_{i_0}\cap a_{j_0}D_{j_0},v}$, we conclude that for any $\varepsilon>0$, there exists a proper Zariski-closed subset $Z$ of $X$ such that for all $P\in R\setminus Z$,
\begin{align*}
\sum_{v \in S}\min\left(\lambda_{D_{i_0}, v}(P), \lambda_{D_{j_0}, v}(P)\right) 
        \le \varepsilon h_D(P)+O(1). 
\end{align*}
Since $R$ is a set of $(D_1+D_2+D_3,S)$-integral points, this is equivalent to
\begin{align*}
h_{D_{i_0}\cap D_{j_0}}(P)\le \varepsilon h_D(P)+O(1)
\end{align*}
for all $P\in R\setminus Z$. Finally, we note that we may remove the $O(1)$ in the inequality at the expense of excluding finitely many points of $R$, finishing the proof.
\end{proof}

We now prove the corollary.

\begin{proof}[Proof of Corollary \ref{generalPosCor}]
The local intersection multiplicity and the local Hilbert-Samuel multiplicity coincide (see \cite[Ex.~2.4.8, Ex.~7.1.10]{Fulton}):
\begin{align*}
\mu_Q=\mu_Q((a_{i_0}D_{i_0}\cap a_{j_0}D_{j_0})_Q)=(a_{i_0}D_{i_0}.a_{j_0}D_{j_0})_Q.
\end{align*}
Since $(a_{i_0}D_{i_0}.a_{j_0}D_{j_0})=D^2$ is the sum of the local intersection multiplicities, and $D_i$ and $D_j$ intersect at more than one point, we must have
\begin{align*}
\mu_Q<D^2
\end{align*}
for all $Q\in X(\kbar)$.

Then by Lemma \ref{betamultsimple}, for all $Q\in X(\kbar)$, 
\begin{align*}
\beta(D,(a_{i_0}D_{i_0}\cap a_{j_0}D_{j_0})_Q)\geq \frac{2}{3}\sqrt{\frac{D^2}{\mu_Q}}>\frac{2}{3},
\end{align*}
and the desired result follows from Theorem \ref{generalPos}.
\end{proof}

We give an example to show that both the hypothesis \eqref{beta23cond} of Theorem \ref{generalPos} and the intersection condition of Corollary \ref{generalPosCor} are necessary.
\begin{example}
\label{gcdcounter}
Let $D_1,D_2,D_3$ be the coordinate lines in $\mathbb{P}^2$, let $p,q$ be rational primes, and let $S=\{p,q,\infty\}$, a set of places of $\mathbb{Q}$.  Then it follows from \cite[p.~707]{LevWirsing} that there exists a Zariski dense set of $(D_1+D_2+D_3,S)$-integral points $R$ in $\mathbb{P}^2(\mathbb{Q})$ such that
\begin{align*}
h_{D_i\cap D_j}(P)=\frac{1}{2}h(P)+O(1)
\end{align*}
for all $P\in R$ and all $i,j\in \{1,2,3\}$, $i\neq j$. Note that in this case, if $i\neq j$, then $D_i\cap D_j$ consists of a single point and $\beta(\CO(1),D_i\cap D_j)=\frac{2}{3}$.
\end{example}

\subsection{On the Diophantine Equation $f(a^m, y) = b^n$}\label{sec: gcdapp}

In this section, we provide an application of our result on greatest common divisors (Corollary \ref{generalPosCor}) to study the exponential Diophantine equation $f(a^m, y) = b^n$, where $f(x,y)$ is a polynomial with rational coefficients and $a$ and $b$ are positive integers with a nontrivial common factor.  Such an equation was studied by Corvaja and Zannier \cite{CZ00}, who noted that the equation did not seem to fall into prior treatments of Diophantine equations outside of very special situations (e.g., $f$ is homogeneous (Thue-Mahler), or more generally $f$ is homogeneous with respect to suitable weights). Corvaja and Zannier proved the following result:

\begin{theorem}[Corvaja-Zannier \cite{CZ00}]
\label{CZexp}
Let $f(x, y ) = a_0(x)y^d + a_1(x)y^{d-1}+\dots+a_d(X)$ be
a polynomial with rational coefficients, of degree $d \ge 2$ in $y$; let $a,b > 1$
be integers. Suppose that
\begin{enumerate}
    \item $a_0$ is constant,\label{CZi}
    \item the polynomial $f(0,y)$ has no repeated roots,\label{CZii}
    \item  $a$ and $b$ are not relatively prime.
\end{enumerate}
If the equation
$$ f(a^m, y) = b^n$$
has an infinite sequence of solutions $(m, n, y) \in \mathbb{Z} \times  \mathbb{Z} \times \mathbb{Z}$, such that
$\min\{|m|, |n|\} \rightarrow \infty$, then there exist an integer $h \neq 0$ and a polynomial
$p(x) \in \mathbb{Q}[x]$ such that $f(x^h, p(x))$ has only one term; furthermore, $a$ and  $b$ are multiplicatively dependent.
\end{theorem}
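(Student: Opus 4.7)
The strategy is to recast solutions $(m,n,y)$ as $S$-integral points on a suitable projective surface and exploit the gcd-type inequality of Corollary~\ref{generalPosCor}.

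First I set up the geometry. Let $S$ be the finite set of places of $\mathbb{Q}$ consisting of the archimedean place together with all rational primes dividing $ab$. For any solution $(m,n,y)$, the point $P_{m,n,y} := (a^m,y) \in \mathbb{A}^2(\mathbb{Q})$ has the property that $a^m$ and $b^n = f(a^m,y)$ are $S$-units and $y$ is an $S$-integer. Compactifying $\mathbb{A}^2 \subset \mathbb{P}^2$, I let $D_1$ be the line $\{x=0\}$, $D_2$ the projective closure of $\{f(x,y)=0\}$, and $D_3$ the line at infinity. A direct local-height computation then shows that the $P_{m,n,y}$ form a set of $(D_1+D_2+D_3,S)$-integral points of $\mathbb{P}^2$.

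Second, I apply Corollary~\ref{generalPosCor}. In the case when the total degree of $f$ equals $d$, hypothesis~(\ref{CZi}) implies that the point $[0:1:0]$ does not lie on $D_2$, so $D_1,D_2,D_3$ intersect properly on $\mathbb{P}^2$; with $a_1=a_3=d$ and $a_2=1$, the divisors $a_iD_i$ are each numerically equivalent to $dH$, where $H$ is the hyperplane class. By hypothesis~(\ref{CZii}) together with $d \geq 2$, the intersection $D_1 \cap D_2 = \{(0,\beta_1),\ldots,(0,\beta_d)\}$ consists of $d \geq 2$ distinct points, one for each root of $f(0,y)$. Corollary~\ref{generalPosCor} therefore provides, for any $\varepsilon>0$, a proper Zariski-closed set $Z\subset\mathbb{P}^2$ such that
\[
h_{D_1\cap D_2}(P_{m,n,y}) \le \varepsilon\, h(P_{m,n,y}) + O(1)
\]
for all $P_{m,n,y} \notin Z$. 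The case where the total degree of $f$ exceeds $d$ is handled by first performing the requisite iterated blow-up at $[0:1:0]$ to restore proper intersection, while verifying that the appropriate numerical equivalence and integrality are preserved.

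Third, I translate the gcd inequality into the stated rigidity. At a place $v$ above any common prime factor of $a$ and $b$, one has $|a^m|_v,|b^n|_v\to 0$, so $P_{m,n,y}$ becomes $v$-adically close to one of the intersection points $(0,\beta_{j(v)})$; by hypothesis~(\ref{CZii}) only a single analytic branch $\alpha_j(x)$ of the factorization $f(x,y)=a_0\prod_j(y-\alpha_j(x))$, namely the one with $\alpha_j(0)=\beta_{j(v)}$, is responsible for the smallness of $f(a^m,y)=b^n$ at $v$. Combined with the gcd inequality above, this local information, the assumption $\min\{|m|,|n|\}\to\infty$, and the relation $h(P_{m,n,y})\sim m\log a$, forces a linear relation between $m\log a$ and $n\log b$ along any infinite subsequence of solutions, and hence $a,b$ are multiplicatively dependent. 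Pushing the same local analysis further, $y$ is forced to specialize along a rational curve of the form $y=p(x^{1/h})$ for some $p\in\mathbb{Q}[t]$ and integer $h\neq 0$; substituting this parametrization into $f(x,y)$ shows that $f(x^h,p(x))$ collapses to a single monomial.

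The main obstacle is the last step: converting an asymptotic gcd/height bound into a rigid algebraic conclusion about $f$. This requires a careful Puiseux analysis of the branches $\alpha_j(x)$ at $x=0$ together with the multiplicative constraints imposed by $a^m\sim b^n$. A secondary technical point is that the excluded Zariski-closed set $Z$ may carry rational curves supporting infinite families of solutions; one has to show that such curves correspond precisely to the monomial structure predicted by the conclusion.
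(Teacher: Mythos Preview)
This theorem is \emph{not} proved in the paper; it is quoted from \cite{CZ00} as Theorem~\ref{CZexp}. The paper then proves a \emph{different} result (the theorem immediately following Theorem~\ref{CZexp}) under the stronger hypothesis $F(0,1,0)\neq 0$ (equivalently, ``$a_0$ constant \emph{and} total degree of $f$ equals $d$'') but with condition~\eqref{CZii} weakened. So there is no ``paper's own proof'' of Theorem~\ref{CZexp} to compare against.

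That said, it is instructive to contrast your $\mathbb{P}^2$ setup with the paper's approach to its related theorem. The paper does \emph{not} work in $\mathbb{P}^2$; it passes to the hypersurface $V\colon b^iW^d=F(X,Y,Z)$ in $\mathbb{P}^3$ and takes $D_1=\{W=0\}$, $D_2=\{X=0\}$, $D_3=\{Z=0\}$ on $V$. The point of introducing the extra coordinate $W$ is that on $V$ \emph{all three} pairwise intersections $D_i\cap D_j$ contain at least two points, so Corollary~\ref{generalPosCor} applies to each pair. The paper then combines the three resulting gcd inequalities (at $v=\infty$ for the pairs $(1,3)$ and $(2,3)$, and at a prime $p\mid\gcd(a,b)$ for the pair $(1,2)$) to bound $h_D(P)$ outright and conclude non--Zariski-density. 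In your $\mathbb{P}^2$ setup, $D_1\cap D_3=\{[0:1:0]\}$ is a single point (and $D_2\cap D_3$ may well be supported at a single point too), so Corollary~\ref{generalPosCor} yields only the one inequality $h_{D_1\cap D_2}(P)\le \varepsilon h(P)$. This is strictly less information, and it is not clear that it suffices.

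There are also genuine gaps. First, the case ``total degree of $f$ exceeds $d$'' is precisely the case the paper cannot treat (it says explicitly that its hypothesis~\ref{CZi}$'$ is ``much stronger'' than~\eqref{CZi}); your proposed iterated blow-up does not obviously preserve the numerical-parallel hypothesis $a_iD_i\equiv D$ required by Corollary~\ref{generalPosCor}, and you give no argument. Second, your assertion $h(P_{m,n,y})\sim m\log a$ is unjustified: when $|y|\gg|a^m|$ one has $h(P)\sim \log|y|\sim (n\log b)/d$, and there is no a priori link between $m$ and $n$. Third, your step~3 is a sketch at best: even the paper, after proving non--Zariski-density for its own theorem, only remarks that ``one can derive a conclusion as in Theorem~\ref{CZexp} (e.g., using the Lemma of \cite{CZ00} and Siegel's theorem)'' and explicitly leaves the details to the reader. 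Deriving multiplicative dependence of $a,b$ and the monomial structure of $f(x^h,p(x))$ from a single gcd-type bound requires substantially more than the hand-wave you give.
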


Using Corollary \ref{generalPosCor}, we derive a result along the lines of Theorem \ref{CZexp}. For purposes of comparison, we note that our result is weaker in some respects (condition \ref{CZi}' below is much stronger than condition \ref{CZi} in Theorem \ref{CZexp}), but under our hypotheses we partially weaken condition \ref{CZii} of Theorem \ref{CZexp}. Possibly, condition \ref{CZii} could also be weakened via the approach of Corvaja-Zannier in \cite{CZ00}; our main purpose here is to illustrate how our general Diophantine approximation results may be used to shed new light on existing problems.

\begin{theorem}
Let $F (X, Y, Z) \in \QQ[X, Y, Z]$ be a homogeneous polynomial of degree $d\ge 2$ and let $a,b > 1$ be integers. Suppose that 
\renewcommand{\labelenumi}{\arabic{enumi}'.}
\begin{enumerate}
\item $F(0,1,0) \neq 0$
\item Neither $F(0,y,z)$ nor $F(x,y,0)$ are powers of a linear form in $\Qbar[x,y,z]$.\label{tancond}
\item  $a$ and $b$ are not relatively prime.
\end{enumerate}
\renewcommand{\labelenumi}{\arabic{enumi}.}

Let $f(x,y)=F(x,y,1)$ Then the set of points 
\begin{align*}
\{(a^m, y) \in \ZZ \times \ZZ\mid f(a^m,y) = b^n, m,n\in\mathbb{Z}, m,n\geq 0\}, 
\end{align*}
is not Zariski dense in $\mathbb{A}^2$.
\end{theorem}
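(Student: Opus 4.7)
The plan is to apply Corollary~\ref{generalPosCor} to the configuration $D_1 = \{X=0\}$, $D_2 = \{Z=0\}$, $D_3 = \{F=0\}$ on $\bP^2_K$ (where $K/\bQ$ is a finite extension containing the $\bQb$-points of all pairwise intersections). With $a_1 = a_2 = d$ and $a_3 = 1$ these three curves are numerically parallel to the ample divisor $dH$, and the hypotheses of the corollary are verified as follows: condition 1' ($F(0,1,0)\neq 0$) forces $[0:1:0]\notin D_3$, so $D_1\cap D_2\cap D_3 = \emptyset$ and the three distinct irreducible curves intersect properly on $\bP^2$; condition 2' together with $F(0,1,0)\neq 0$ (which rules out $Z\mid F(0,Y,Z)$ and $X\mid F(X,Y,0)$) implies that the binary forms $F(0,Y,Z)$ and $F(X,Y,0)$ are each nonzero of degree $d$ and are not powers of a linear form, whence $\#(D_1\cap D_3)(\kbar)\geq 2$ and $\#(D_2\cap D_3)(\kbar)\geq 2$. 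Taking $S$ to consist of all archimedean places of $K$ together with all places above rational primes dividing $ab$, the point $P = [a^m:y:1]$ attached to any solution of $f(a^m,y) = b^n$ is $(D_1+D_2+D_3,S)$-integral: a direct check shows $\lambda_{D_i,v}(P) = 0$ for $i=1,2,3$ at every non-archimedean $v\notin S$, since $|a^m|_v = |1|_v = 1$, $|y|_v\leq 1$, and $|f(a^m,y)|_v = |b^n|_v = 1$. Applying the corollary with $(i_0,j_0) = (1,3)$ and $(2,3)$ then produces, for each $\varepsilon>0$, a proper closed $Z_\varepsilon\subset\bP^2$ outside which both $h_{D_1\cap D_3}(P)\leq \varepsilon h_D(P)$ and $h_{D_2\cap D_3}(P)\leq \varepsilon h_D(P)$ hold for all but finitely many solution points $P$.

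A local-height computation at a prime $p\mid\gcd(a,b)$ (existing by condition 3') then yields the desired contradiction under the generic hypothesis $|y|\leq a^m$. With $\alpha = v_p(a)$ and $\beta = v_p(b)$, I compute $\lambda_{D_1,v_p}(P) = m\alpha\log p$, $\lambda_{D_3,v_p}(P) = n\beta\log p$, $\lambda_{D_2,\infty}(P) = h(P) + O(1)$, and $\lambda_{D_3,\infty}(P) = dh(P) - n\log b + O(1)$, with all other $\lambda_{D_i,v}(P)$ equal to $O(1)$. Since $\lambda_{D_2,v}$ vanishes at non-archimedean places, the corollary's bound on $h_{D_2\cap D_3}$ reads $\min(h(P),dh(P)-n\log b)\leq \varepsilon dh(P) + O(1)$, which for $\varepsilon<1/d$ forces $n\log b = dh(P)\cdot(1+O(\varepsilon))$. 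This makes $n\beta\log p \geq c\,h(P)$ for a positive constant $c$, so the minimum in $h_{D_1\cap D_3}(P)\geq\min(m\alpha,n\beta)\log p + O(1)$ is $m\alpha\log p$, and the first corollary bound becomes
\begin{align*}
m\log a \leq C\varepsilon\, h(P)
\end{align*}
for a constant $C$ depending only on $a,b,d,p$. When $|y|\leq a^m$, $h(P) = m\log a + O(1)$ and the inequality reduces to $1\leq C\varepsilon$, contradicting the choice of small $\varepsilon$; such solutions are therefore confined to $Z_\varepsilon$ along with finitely many exceptions.

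The main obstacle is the complementary case $|y|>a^m$, in which $h(P) = \log|y|$ and the bound $m\log a\leq C\varepsilon\log|y|$ does not by itself force solutions into a proper closed subset---the solutions cluster projectively near the point $[0:1:0]$, which is only a closed point and not a positive-dimensional subvariety of $\bP^2$. However, condition 1' yields $b^n = F(0,1,0)y^d + O(a^m y^{d-1})$, so the ratio $b^n/(F(0,1,0)y^d)$ tends to $1$ at a rapid rate, imposing a stringent Thue--Mahler-type condition on $(m,n,y)$. To finish, one may apply Theorem~\ref{BetaMain} on the blowup $\Bl_{[0:1:0]}\bP^2$, attaching regular chains to the strict transforms of $D_1, D_2, D_3$ and the exceptional divisor so that accumulation to $[0:1:0]$ becomes captured by a closed condition on the blowup; alternatively, a direct Thue--Mahler argument applied to the tight approximation $b^n\approx F(0,1,0)y^d$ forces such solutions onto finitely many vertical lines $\{x = a^{m_0}\}$, contradicting Zariski density.
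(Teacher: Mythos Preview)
Your setup in $\bP^2$ with $D_1=\{X=0\}$, $D_2=\{Z=0\}$, $D_3=\{F=0\}$ differs structurally from the paper's proof, which lifts the problem to the hypersurface $V\colon b^iW^d=F(X,Y,Z)$ in $\bP^3$ and takes the three \emph{hyperplane} sections $D_1=\{W=0\}$, $D_2=\{X=0\}$, $D_3=\{Z=0\}$. On $V$ these are linearly equivalent ample divisors intersecting properly, and \emph{each} of the three pairwise intersections has at least two $\kbar$-points (conditions~1${}'$--2${}'$ handle $D_1\cap D_2$ and $D_1\cap D_3$, while $d\geq 2$ handles $D_2\cap D_3$). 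Corollary~\ref{generalPosCor} therefore applies to all three pairs. A short height computation at $\infty$ and at $p\mid\gcd(a,b)$ then gives simultaneously
\[
n\geq \tfrac{1-\varepsilon}{\log b}\,h,\qquad m\geq \tfrac{1-\varepsilon}{\log a}\,h,\qquad \min(m,n)\leq 2\varepsilon h,
\]
an immediate contradiction for small $\varepsilon$---with no case split on the relative sizes of $|y|$ and $a^m$, because in $\bP^3$ the height $h$ already dominates the coordinate $b^{n'}$.

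This is exactly what your $\bP^2$ approach lacks. Since $D_1\cap D_2=\{[0{:}1{:}0]\}$ is a single point, Corollary~\ref{generalPosCor} is unavailable for the pair $(1,2)$, and that is precisely the pair which in the paper's framework supplies the decisive $p$-adic inequality. Your two usable inequalities do handle the case $|y|\leq a^m$ correctly, but in the regime $|y|>a^m$ they only yield $m\log a\leq C\varepsilon\log|y|$, which neither bounds $m$ nor confines the solutions to a proper closed subset of $\bP^2$. The proposed fixes are not carried out and are problematic as stated: on $\Bl_{[0:1:0]}\bP^2$ the strict transforms of $D_1$ and $D_2$ become fibers of the ruling, hence are no longer ample or numerically parallel to that of $D_3$, so the hypotheses of the corollary fail and one would have to redo the whole setup via Theorem~\ref{BetaMain}; and the Thue--Mahler sketch does not produce ``finitely many vertical lines'' from $m\leq C\varepsilon\log|y|$ without a genuinely separate Diophantine input (Ridout, Baker, \dots). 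The gap is real, and the cleanest repair is the paper's: introduce $W=b^{n'}$ as a fourth coordinate so that all three pairwise gcd inequalities become available.
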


From the non-Zariski density statement one can derive a conclusion as in Theorem \ref{CZexp} (e.g., using the Lemma of \cite{CZ00} and Siegel's theorem on integral points on curves); we leave the details to the interested reader.

\begin{proof}

Let $V_i$ be the hypersurface of $\mathbb{P}^3$ defined by $b^iW^d=F(X,Y,Z)$, $i\in \{0,\ldots, d-1\}$. If $f(a^m,y)=b^n$, for some $m,n,y\in\mathbb{Z}$, then writing $b^n=(b^{n'})^db^i$ for some $n'\in\mathbb{Z}$ and $i\in \{0,\ldots, d-1\}$, we have $[W:X:Y:Z]=[b^{n'}:a^m:y:1]\in V_i(\mathbb{Q})$.  Then after fixing $i$ and letting $V=V_i$, it suffices to show that the set of points
\begin{align*}
R:=\{[b^n:a^m:y:1] \in V(\mathbb{Q})\mid m,n,y\in\mathbb{Z}, m,n\geq 0\}. 
\end{align*}
is not Zariski dense in $V$ (the statement in the theorem follows after taking an appropriate projection).

Let $D_1,D_2,D_3$ be the divisors on $V$ defined by $W=0$, $X=0$, and $Z=0$, respectively. Then clearly $D_1,D_2$, and $D_3$ are linearly equivalent ample effective Cartier divisors; let $D$ be any divisor in the same linear equivalence class. Since $F(0,1,0)\neq 0$, $V$ and the hyperplanes defined by $W=0$, $X=0$, and $Z=0$ are in general position on $\mathbb{P}^3$. Then by Remark \ref{remgenproper}, $D_1,D_2,D_3$ intersect properly on $V$. The condition \eqref{tancond} implies that $\Supp (D_1\cap D_2)(\Qbar)$ and $\Supp (D_1\cap D_3)(\Qbar)$ both contain more than one point, while the conditions $F(0,1,0)\neq 0$ and $d\geq 2$ imply that $\Supp (D_2\cap D_3)(\Qbar)$ contains more than one point. Moreover, if $S$ is the set of places of $\mathbb{Q}$ given by
\begin{align*}
S=\{p \text{ prime}\mid p|ab\}\cup \{\infty\},
\end{align*}
then $R$ is a set of $(D_1+D_2+D_3,S)$-integral points in $V(\mathbb{Q})$. By Corollary \ref{generalPosCor}, there exists a proper Zariski-closed subset $Z\subset V$ such that if $i\neq j$ and $v\in S$, then
\begin{align}
\min\{\lambda_{D_i,v}(P),\lambda_{D_j,v}(P)\}\leq \varepsilon h_D(P),\label{equgcd}
\end{align}
for all but finitely many $P\in R\setminus Z$.

Let $P=[b^n:a^m:y:1]\in R$. Let $h=h(P)=\log\max\{|b^n|,|a^m|,|y|\}=h_D(P)+O(1)$ and let $p$ be a prime dividing $\gcd(a,b)$. Then we compute
\begin{center}
\begin{tabular}{p{2cm}p{6cm}p{6cm}}
Divisor $D_i$ &  $\lambda_{D_i,\infty}(P)$  &  $\lambda_{D_i,p}(P)$ \\
$D_1$ &  $ h - n\log b$  &  $n (\log p) (\mathrm{ord}_p b)$ \\
$D_2$ & $h-m\log a$ & $m(\log p) (\mathrm{ord}_p a)$  \\
$D_3$ & $h$ & $0$\\
\end{tabular}
\end{center}

Then using \eqref{equgcd} with $(i,j,v)=(1,3,\infty), (2,3,\infty), (1,2,p)$, respectively, we find that for all $P\in R\setminus Z$,
\begin{align*}
n\geq \frac{1}{\log b}(1-\varepsilon)h_D(P)+O(1)\\
m\geq \frac{1}{\log a}(1-\varepsilon)h_D(P)+O(1)\\
\min\{m,n\}\leq 2\varepsilon h_D(P)+O(1).
\end{align*}
Taking $0<\varepsilon<\frac{1}{2\max\{\log a,\log b\}+1}$, the inequalities imply that $h_D(P)$ is bounded for $P\in R\setminus Z$. Since $D$ is ample, we conclude that $R\setminus Z$ is a finite set and $R$ is not Zariski dense in $V$.
\end{proof}

\subsection{Integral Points on the Complement of Three Numerically Parallel Curves Passing Through a Point}\label{sec: nongeneral}

From the work of Corvaja-Zannier \cite{CZ04} and the second author \cite{Lev09}, it is known that the complement of any $4$ ample divisors in general position on a projective surface does not contain a Zariski dense set of integral points; the number $4$ here is sharp as $\mathbb{G}_m^2\cong \mathbb{P}^2\setminus \{xyz=0\}$ and $\mathbb{G}_m^2(\CO_{K,S})\cong (\CO_{K,S}^*)^2$ is Zariski dense in $\mathbb{G}_m^2$ as long as $|S|>1$. As remarked after Theorem \ref{nonGeneralP2}, it is already an open problem to prove the degeneracy of integral points on the complement of three plane curves forming a normal crossings divisor of degree at least $4$. In contrast to this, it is sometimes possible to handle certain degenerate (i.e., non-normal crossings) configurations of three plane curves. For instance, the problem of integral points on the complement of a conic and two (distinct) tangent lines is easily reduced to Siegel's theorem for integral points on $\mathbb{P}^1$. A deeper result, depending ultimately on the Subspace Theorem, is the following theorem of Corvaja and Zannier \cite{CZ06}.

\begin{theorem}[Corvaja-Zannier]
\label{CZnongen}
Let $D_1, D_2, D_3$ be distinct, effective, irreducible, numerically equivalent divisors on a nonsingular projective surface $X$ defined over a number field $K$, such that
    \begin{enumerate}[(a)]
        \item $D_1 \cap D_2 \cap D_3$ consists of a single point, at which the $D_i$ intersect transversally.
        \item $D_i.D_j >1$ for some $i,j$.
    \end{enumerate}
Let $S$ be a finite set of places of $K$ containing the archimedean places.  Then no set of $(D_1 + D_2 + D_3, S)$-integral points in $X(K)$ is Zariski-dense in $X$.
\end{theorem}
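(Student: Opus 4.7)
My plan is to derive Theorem \ref{CZnongen} by applying Theorem \ref{BetaMain} along with the gcd inequality in Corollary \ref{generalPosCor}, exploiting that the transverse triple intersection at $Q$ together with $D_i.D_j > 1$ forces all local intersection multiplicities to be strictly less than $D^2$. Let $D$ denote the common ample numerical class of the $D_i$, so $D^2 > 1$. Transversality at $Q$ gives $(D_i.D_j)_Q = 1$, so the remaining $D^2 - 1$ units of intersection mass are distributed over other points of $\Supp(D_i \cap D_j)$; in particular $|\Supp(D_i \cap D_j)| \geq 2$, and at every $Q'' \in \Supp(D_i \cap D_j)$ we have $\mu_{Q''}((D_i \cap D_j)_{Q''}) \leq D^2 - 1 < D^2$. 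Lemma \ref{betamultsimple} then gives $\beta(D,(D_i \cap D_j)_{Q''}) \geq \frac{2}{3}\sqrt{D^2/\mu_{Q''}} > \frac{2}{3}$ uniformly in $Q''$.

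For a $(D_1+D_2+D_3,S)$-integral point $P$ and each $v\in S$, order the indices so that $\lambda_{D_{i_v},v}(P) \geq \lambda_{D_{j_v},v}(P) \geq \lambda_{D_{k_v},v}(P)$. Pairwise proper intersection of the $D_i$ together with $D_1\cap D_2\cap D_3 = \{Q\}$ gives via Theorem \ref{thm: Silverman}\eqref{subint} the decomposition
\[
\sum_{i=1}^3 \lambda_{D_i,v}(P) = \lambda_{D_{i_v},v}(P) + \lambda_{D_{i_v} \cap D_{j_v},v}(P) + \lambda_{Q,v}(P) + O(1).
\]
I would then apply Theorem \ref{BetaMain} with the regular chain $D_{i_v} \supset Y_v$, where $Y_v = (D_{i_v} \cap D_{j_v})_{Q_v^*}$ and $Q_v^* \in \Supp(D_{i_v}\cap D_{j_v})$ is chosen to maximize $\lambda_{(D_{i_v}\cap D_{j_v})_{Q^*},v}(P)$, so that $\lambda_{D_{i_v}\cap D_{j_v},v}(P) = \lambda_{Y_v,v}(P) + O(1)$. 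Finite-casing over the boundedly many possible triples $(i_v,j_v,Q_v^*)$, the Main Theorem then gives, outside a proper Zariski-closed subset, the uniform inequality
\[
\sum_{v \in S} \left( \tfrac{1}{3}\lambda_{D_{i_v},v}(P) + \left(\beta(D,Y_v) - \tfrac{1}{3}\right) \lambda_{Y_v,v}(P) \right) < (1+\varepsilon) h_D(P),
\]
with $3\beta(D,Y_v) - 1 > 1$ strictly by the first paragraph.

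Finally, I would invoke Corollary \ref{generalPosCor}---applicable because $|\Supp(D_i \cap D_j)| \geq 2$---to bound $\sum_v \lambda_{Q,v}(P) \leq h_{D_i \cap D_j}(P) + O(1) \leq \varepsilon h_D(P) + O(1)$ outside a further closed subset. Combining the integrality identity $\sum_v (\lambda_{D_{i_v},v} + \lambda_{Y_v,v} + \lambda_{Q,v}) = 3 h_D(P) + O(1)$ with the Main Theorem constraint and the gcd bound should, whenever the excess $\beta(D,Y_v) - \frac{1}{3}$ is large enough (as occurs when $D^2 \geq 3$), force $h_D(P)$ to be bounded outside a proper Zariski-closed subset of $X$, giving the required non-density. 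The main obstacle I anticipate is the borderline regime $D^2 = 2$, in which $3\beta(D,Y_v) - 2 \geq 2\sqrt{2} - 2 \approx 0.83$ is only marginally positive and the direct linear combination of the three inequalities is not strong enough to conclude. In that case I would reduce further by blowing up $X$ at $Q$, producing a surface $\tilde X$ on which $\tilde D_1,\tilde D_2,\tilde D_3,E$ become four divisors in general position, and reapply Theorem \ref{BetaMain} with the ample divisor $A = \pi^*D - \delta E$ for small $\delta > 0$, using Lemma \ref{exclemma} to obtain the strict improvement $\beta(A,\tilde D_i) > \frac{1}{3}$ that is unavailable for $D_i$ on $X$ itself.
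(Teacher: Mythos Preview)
There are two genuine gaps. First, you invoke Corollary \ref{generalPosCor} to bound $\sum_v \lambda_{Q,v}(P)$, but that corollary (and Theorem \ref{generalPos}) requires $D_1, D_2, D_3$ to intersect properly on the surface $X$. By Definition \ref{properdef} this would force $D_1 \cap D_2 \cap D_3 = \emptyset$, since a regular sequence in the two-dimensional local ring $\cO_{X,Q}$ has length at most $2$; this directly contradicts hypothesis (a). The analogous triple-intersection bound in the non-general-position setting is precisely what the first half of the proof of Theorem \ref{nonGeneral} establishes, and that argument uses hypothesis \eqref{beta1cond} in an essential way---which is exactly why the paper cannot recover the case $D_i.D_j = 2$.

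Second, even granting a bound $\sum_v \lambda_{Q,v}(P) \leq \varepsilon h_D(P)$, your non-blowup argument does not conclude for any value of $D^2$. Combining the Main Theorem inequality $\sum_v(\lambda_{D_{i_v},v}+(3\beta_v-1)\lambda_{Y_v,v})\leq (3+\varepsilon)h_D$ with the integrality identity and the gcd bound yields only that $\sum_v \lambda_{Y_v,v}(P)$ and $\sum_v \lambda_{Q,v}(P)$ are each $O(\varepsilon h_D(P))$, and hence $\sum_v \lambda_{D_{i_v},v}(P) = (3+O(\varepsilon)) h_D(P)$. This is no contradiction: an integral point may perfectly well be $v$-adically close to a single divisor at every place. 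The obstruction is structural: on $X$ one has $\beta(D,D_i) = \tfrac{1}{3}$ exactly, so Theorem \ref{BetaMain} is tight in the divisor term and cannot beat the integrality lower bound. The blowup step with Lemma \ref{exclemma}---which you propose only for $D^2 = 2$---is in fact required in \emph{every} case: it is how the proof of Theorem \ref{nonGeneral} obtains the strict inequality $\beta(\pi^*D - \delta E, \pi^*D_i - E) > \tfrac{1}{3}$ and thereby beats the integrality lower bound on $\tilde{X}$. (Note also that the paper does not itself prove Theorem \ref{CZnongen}; it is quoted from \cite{CZ06}, and Theorem \ref{nonGeneral} is offered as a generalization that explicitly omits the case $D_i.D_j = 2$.)
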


We prove a generalization of Theorem \ref{CZnongen} where we greatly weaken the triple intersection condition, and we only require the three divisors to be numerically \emph{parallel} rather than numerically equivalent (to be precise, we don't recover the case $D_i.D_j=2$ of Theorem \ref{CZnongen}; however, in our formulation, which allows more than one point in $D_1\cap D_2\cap D_3$, excluding this case is necessary by Example \ref{12example}).

\begin{theorem}\label{nonGeneral}
Let $X$ be a projective surface over a number field $K$,  and let $D_1, D_2, D_3$ be effective Cartier divisors on $X$ pairwise intersecting properly. Suppose that there exist positive integers $a_1,a_2,a_3$ such that $a_1D_1, a_2D_2, a_3D_3$ are all numerically equivalent to an ample divisor $D$, and that
\begin{align*}
D_1\cap D_2\cap D_3\neq \emptyset.
\end{align*}
Let
\begin{align*}
\beta_0=\min_{\substack{i\neq j\\Q\in (D_1\cap D_2\cap D_3)(\kbar)}} \beta(D,(a_iD_i\cap a_jD_j)_Q).
\end{align*}
Furthermore, suppose that for every point $Q\in (D_1\cap D_2\cap D_3)(\kbar)$ and every permutation $i,j,k$ of the indices $1,2,3$, we have
\begin{align}
\label{beta1cond}
(\beta(D,(a_iD_i\cap a_jD_j)_Q)-1)+(\beta(D,(a_iD_i\cap a_kD_k)_Q)-1)(3\beta_0-2)>0. 
\end{align}
In particular, $\beta_0\geq \frac{2}{3}$ and \eqref{beta1cond} holds if
\begin{align}
\label{beta2cond}
\beta(D,(a_iD_i\cap a_jD_j)_Q)>1
\end{align}
or
\begin{align}
\label{49ineq2}
(D_i.D_j)_Q<\frac{4}{9}(D_i.D_j)
\end{align}
for all $Q\in (D_1\cap D_2\cap D_3)(\kbar)$ and all $i\neq j$ (where $(D_i.D_j)_Q$ denotes the local intersection multiplicity of $D_i$ and $D_j$ at $Q$, and $(D_i.D_j)$ the intersection multiplicity).  Let $S$ be a finite set of places of $K$ containing all the archimedean places. Then there exists a proper Zariski-closed subset $Z\subset X$ such that for any set $R\subset X(K)$ of $(D_1 + D_2 + D_3, S)$-integral points, the set $R\setminus Z$ is finite.
\end{theorem}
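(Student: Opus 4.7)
The plan is to apply Theorem \ref{BetaMain} to well-chosen regular chains at each place $v \in S$, and combine the resulting inequalities with the integrality identity $\sum_{v\in S}\sum_{i=1}^{3}\lambda_{a_iD_i,v}(P) = 3h_D(P) + O(1)$ (which follows from $(D_1+D_2+D_3,S)$-integrality together with $a_iD_i \equiv D$) to force $P$ into a proper Zariski-closed subset whenever $h_D(P)$ is large.

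For each $P \in R$ and $v \in S$, I will order the three local heights decreasingly as $\alpha_v \geq \beta_v \geq \gamma_v$, corresponding to a permutation $(i_v,j_v,k_v)$ of $(1,2,3)$. Pairwise proper intersection of $D_1,D_2,D_3$ together with Silverman's property (Theorem \ref{thm: Silverman}(\ref{subint})) and Remark \ref{remzerodim} give $\beta_v = \lambda_{(a_{i_v}D_{i_v}\cap a_{j_v}D_{j_v})_{Q_v},v}(P) + O(1)$ for a specific point $Q_v \in \Supp(D_{i_v}\cap D_{j_v})$ determined by $P$ and $v$; if $\gamma_v$ is also significant, then $P$ is close to all three divisors, forcing $Q_v \in D_1\cap D_2\cap D_3$. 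Since only finitely many tuples $(i_v,j_v,k_v,Q_v)_{v\in S}$ can arise, I will fix one such tuple and argue within the corresponding partition of $R$.

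I then plan to apply Theorem \ref{BetaMain} to the regular chain $a_{i_v}D_{i_v} \supset (a_{i_v}D_{i_v}\cap a_{j_v}D_{j_v})_{Q_v}$; since $\beta(D,a_{i_v}D_{i_v}) = 1/3$, this yields (outside a proper Zariski-closed set) the inequality $\sum_v \tfrac{1}{3}\alpha_v + (\beta_v^{ij} - \tfrac{1}{3})\beta_v < (1+\varepsilon)h_D(P) + O(1)$, where $\beta_v^{ij} := \beta(D,(a_{i_v}D_{i_v}\cap a_{j_v}D_{j_v})_{Q_v})$. When $Q_v \in D_1\cap D_2\cap D_3$, a second application to the chain $a_{i_v}D_{i_v} \supset (a_{i_v}D_{i_v}\cap a_{k_v}D_{k_v})_{Q_v}$ will produce an analogous inequality controlling $\gamma_v$ via $\beta_v^{ik}$. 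A nonnegative linear combination of the two inequalities, compared via the extreme-ray identities for the cone $\alpha_v \geq \beta_v \geq \gamma_v \geq 0$ against the integrality identity, should yield a contradiction whenever the combined total weight can be kept strictly below $3$. Lemma \ref{betalcm} automatically supplies $\beta_v^{pq} \geq 2/3 = \beta_0$, and the hypothesis \eqref{beta1cond} at each $Q \in D_1\cap D_2\cap D_3$ and each permutation is exactly the feasibility condition for this linear program.

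To verify the sufficient conditions, I would note: \eqref{beta2cond} ($\beta_{ij,Q} > 1$ for all pairs and $Q$) makes the left-hand side of \eqref{beta1cond} the sum of a strictly positive term and a nonnegative term, using $3\beta_0 - 2 \geq 0$; and \eqref{49ineq2} gives $\mu_Q = a_ia_j(D_i.D_j)_Q < \tfrac{4}{9}D^2$, whence Lemma \ref{betamultsimple} yields $\beta_{ij,Q} \geq \tfrac{2}{3}\sqrt{D^2/\mu_Q} > 1$, recovering \eqref{beta2cond}. The hard part will be the optimization in the third paragraph: extracting the sharp condition \eqref{beta1cond} (rather than only \eqref{beta2cond}) appears to require place-dependent or asymmetric weighting keyed to $\beta_0$, together with a careful argument that places at which $P$ is close to at most one divisor can be absorbed into the exceptional Zariski-closed set.
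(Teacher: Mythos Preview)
Your outline correctly reproduces the first half of the paper's argument: ordering the three local heights, reducing via Theorem~\ref{thm: Silverman}\eqref{subint} and Remark~\ref{remzerodim} to pointwise closed subschemes, applying Theorem~\ref{BetaMain} to the two chains $a_{i_v}D_{i_v}\supset (a_{i_v}D_{i_v}\cap a_{j_v}D_{j_v})_{Q_v}$ and $a_{i_v}D_{i_v}\supset (a_{i_v}D_{i_v}\cap a_{k_v}D_{k_v})_{Q_v}$, and combining them with weights $b=\tfrac{1}{3\beta_0-1}$ and $1-b$. However, this linear combination, compared against the integrality identity, only yields
\[
h_{a_1D_1\cap a_2D_2\cap a_3D_3}(P)<\varepsilon\,h_D(P)
\]
outside a closed set; it does \emph{not} give finiteness. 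The obstruction is the extreme ray $(\alpha_v,\beta_v,\gamma_v)=(1,1,0)$: at a place where $P$ is $v$-close to exactly two of the divisors and $Q_v\notin D_1\cap D_2\cap D_3$, the only available bound is $\beta(D,(a_iD_i\cap a_jD_j)_{Q_v})\geq \tfrac{2}{3}$ from Lemma~\ref{betalcm} (the hypothesis \eqref{beta1cond} says nothing about such $Q_v$). Then the Diophantine inequality degenerates to $\sum_v(\alpha_v+\beta_v)\le (3+\varepsilon)h_D(P)$, which matches the integrality identity to first order and produces no contradiction. Your remark that ``places at which $P$ is close to at most one divisor can be absorbed'' is beside the point; the dangerous case is \emph{two} divisors.

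The paper closes this gap with an additional step you omitted: once the triple-intersection height is known to be small, one blows up $X$ at a point of $D_1\cap D_2\cap D_3$, replaces $a_iD_i$ by the strict-transform-type divisors $D_i'=a_i\pi^*D_i-E$, and crucially invokes Lemma~\ref{exclemma}, which gives $\beta(\pi^*D-\delta E,\pi^*D-E)>\tfrac{1}{3}$ for small $\delta$. This strict improvement over $\tfrac{1}{3}$ breaks the degeneracy along the ray $(1,1,0)$ and, together with the already-established smallness of $h_E(P)$ and of the triple-intersection contributions on the blowup, yields $\sum_v\sum_i\lambda_{D_i',v}(P)\le (3-\delta')h_{\pi^*D}(P)$ for some $\delta'>0$, contradicting integrality and forcing finiteness via Northcott.
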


Theorem \ref{nonGeneralP2} from the introduction follows immediately  (using \eqref{49ineq2}).

\begin{proof}
After replacing $K$ by a finite extension, we can assume that every point in the support of $D_i\cap D_j$, $i\neq j$, is $K$-rational.

We first show that there exists a proper Zariski-closed subset $Z\subset X$ such that for any set $R\subset X(K)$ of $(D_1 + D_2 + D_3, S)$-integral points, we have
\begin{align*}
h_{a_1D_1\cap a_2D_2\cap a_3D_3}(P)&=\sum_{v \in S}\lambda_{a_1D_1\cap a_2D_2\cap a_3D_3, v}(P)+O(1)\\
&= \sum_{v \in S} \min\left(\lambda_{a_1D_1, v}(P), \lambda_{a_2D_2, v}(P),\lambda_{a_3D_3, v}(P)\right)+O(1)\\ 
&\le \varepsilon h_D(P)+O(1)
\end{align*}
for all $P\in R\setminus Z$.

%For any point $P\in X(K), v\in S, i\neq j$, there is a point $Q_v\in \Supp (a_iD_i\cap a_jD_j)$ (depending on $P$) such that
%\begin{align}\label{pointeq}
%\lambda_{a_iD_i\cap a_jD_j,v}(P)=\lambda_{(a_iD_i\cap a_jD_j)_{Q_v},v}(P)+O(1)
%\end{align}
%where the constant in the $O(1)$ is independent of $P$.
%
%
%For $v\in S$, let $i_v,j_v\in \{1,2,3\}$, $i_v\neq j_v$. Let $\epsilon>0$ and let
%\begin{align*}
%\gamma_v=
%\begin{cases}
%\gamma & \text{if }\{i_v,j_v\}=\{i_0,j_0\},\\
%0 & \text{otherwise}.
%\end{cases}
%\end{align*}
%By \eqref{23ineq}, \eqref{pointeq}, the definitions of $\gamma$ and $\gamma_v$, and Theorem \ref{BetaMain},  there exists a proper Zariski-closed subset $Z$ of $X$ such that up to $O(1)$ (and after multiplying by $3$)
 %
%\begin{multline}
%\label{DiDjineq}
%\sum_{v\in S}\lambda_{a_{i_v}D_{i_v},v}(P)+(1+\gamma_v)\lambda_{a_{i_v}D_{i_v}\cap a_{j_v}D_{j_v},v}(P)=\sum_{v\in S}\lambda_{a_{i_v}D_{i_v},v}(P)+(1+\gamma_v)\max_Q\lambda_{(a_{i_v}D_{i_v}\cap a_{j_v}D_{j_v)_Q},v}(P)\\
%<\sum_{v\in S}\lambda_{a_{i_v}D_{i_v},v}(P)+\left(3\min_{Q\in \Supp (D_{i_v}\cap D_{j_v})}\beta(D,(a_{i_v}D_{i_v}\cap a_{j_v}D_{j_v})_Q)-1\right)\lambda_{a_{i_v}D_{i_v}\cap a_{j_v}D_{j_v},v}(P)\\
%<(3+\epsilon)h_D(P)
%\end{multline}
%for all $P\in X(K)\setminus Z$. As there are only finitely many choices of $i_v,j_v$, we may find such a $Z$ that works for all choices of $i_v, j_v$, $v\in S$ (with $i_v\neq j_v$).

%Let $R$ be a set of $(D_1 + D_2 + D_3, S)$-integral points. 
By definition and elementary properties of heights, for any $\varepsilon>0$,
\begin{align*}
\sum_{v\in S} \left(\lambda_{a_1D_{1},v}(P)+\lambda_{a_{2}D_{2},v}(P)+\lambda_{a_{3}D_{3},v}(P)\right)&=h_{a_1D_1}(P)+h_{a_2D_2}(P)+h_{a_3D_3}(P)+O(1)\\
&\geq (3-\varepsilon)h_D(P)+O(1),
\end{align*}
for all $P\in R$, where the $O(1)$ possibly depends on $R$ (but not $P$).

Let $P\in R$. For $v\in S$, let $\{i_v,j_v,k_v\}=\{1,2,3\}$ be such that

$$
\lambda_{a_{i_v}D_{i_v},v}(P)\geq \lambda_{a_{j_v}D_{j_v},v}(P)\geq \lambda_{a_{k_v}D_{k_v},v}(P).
$$

For each $v\in S$, there exists a point $Q_v\in \Supp (a_{i_v}D_{i_v}\cap a_{j_v}D_{j_v})$ (depending on $P$) such that
\begin{align*}
\lambda_{a_{i_v}D_{i_v}\cap a_{j_v}D_{j_v},v}(P)=\lambda_{(a_{i_v}D_{i_v}\cap a_{j_v}D_{j_v})_{Q_v},v}(P)+O(1),
\end{align*}
where the constant in the $O(1)$ is independent of $P$.

If $Q_v\not\in \Supp D_1\cap D_2\cap D_3$, then
\begin{align*}
\lambda_{a_1D_1\cap a_2D_2\cap a_3D_2,v}(P)&=\min \{\lambda_{a_{k_v}D_{k_v},v}(P),\lambda_{a_{i_v}D_{i_v}\cap a_{j_v}D_{j_v},v}(P)\}+O(1)\\
&=\min \{\lambda_{(a_{i_v}D_{i_v}\cap a_{j_v}D_{j_v})_{Q_v},v}(P),\lambda_{a_{k_v}D_{k_v},v}(P)\}+O(1)\\
&=\lambda_{(a_{i_v}D_{i_v}\cap a_{j_v}D_{j_v})_{Q_v}\cap (a_{k_v}D_{k_v}),v}\\
&=O(1)
\end{align*}
since $(a_{i_v}D_{i_v}\cap a_{j_v}D_{j_v})_{Q_v}\cap (a_{k_v}D_{k_v})$ is empty.  When $Q_v\in \Supp D_1\cap D_2\cap D_3$, we use the estimate
\begin{align*}
\lambda_{a_{k_v}D_{k_v},v}(P)=\lambda_{(a_1D_1\cap a_2D_2\cap a_3D_3)_{Q_v},v}(P)+O(1)\leq \lambda_{(a_{i_v}D_{i_v}\cap a_{k_v}D_{k_v})_{Q_v},v}(P)+O(1).
\end{align*}

Let 
\begin{align*}
S'&=\{v\in S\mid Q_v\not\in \Supp D_1\cap D_2\cap D_3\},\\
S''&=S\setminus S'.
\end{align*}

It follows that, up to $O(1)$,
\begin{align*}
(3-\varepsilon)h_D(P)&\leq \sum_{v\in S} \left(\lambda_{a_{i_v}D_{i_v},v}(P)+\lambda_{a_{j_v}D_{j_v},v}(P)+\lambda_{a_{k_v}D_{k_v},v}(P)\right)\\
&\leq \sum_{v\in S'} \left(\lambda_{a_{i_v}D_{i_v},v}(P)+\lambda_{(a_{i_v}D_{i_v}\cap a_{j_v}D_{j_v})_{Q_v},v}(P)\right)\\
&~~+\sum_{v\in S''} \left(\lambda_{a_{i_v}D_{i_v},v}(P)+\lambda_{(a_{i_v}D_{i_v}\cap a_{j_v}D_{j_v})_{Q_v},v}(P)+\lambda_{(a_{i_v}D_{i_v}\cap a_{k_v}D_{k_v})_{Q_v},v}(P)\right).
\end{align*}

Since $a_iD_i\equiv D$, we have $\beta(D,a_iD_i)=\frac{1}{3}$ for all $i$, and by Lemma \ref{betalcm}, for all $Q\in X(\overline{K})$ and $i\neq j$, we have
\begin{align*}
\beta(D,(a_iD_i\cap a_jD_j)_Q)\geq \beta(D,a_iD_i\cap a_jD_j)\geq \beta(D,a_iD_i)+\beta(D,a_jD_j)\geq \frac{2}{3}.
\end{align*}
Therefore $\beta_0\geq \frac{2}{3}$.

By Theorem \ref{BetaMain}, there exists a proper Zariski-closed subset $Z\subset X$ such that if $P\not\in Z$, 
\begin{align}
\sum_{v\in S} \lambda_{a_{i_v}D_{i_v},v}(P)+(3\beta(D,(a_{i_v}D_{i_v}\cap a_{j_v}D_{j_v})_{Q_v})-1)\lambda_{(a_{i_v}D_{i_v}\cap a_{j_v}D_{j_v})_{Q_v},v}(P)\leq (3+\varepsilon)h_D(P)\label{ijeq},
\end{align}
and
\begin{equation}
\begin{aligned}
&~~~\sum_{v\in S'} \lambda_{a_{i_v}D_{i_v},v}(P)+(3\beta(D,(a_{i_v}D_{i_v}\cap a_{j_v}D_{j_v})_{Q_v})-1)\lambda_{(a_{i_v}D_{i_v}\cap a_{j_v}D_{j_v})_{Q_v},v}(P)\\
&+\sum_{v\in S''} \lambda_{a_{i_v}D_{i_v},v}(P)+(3\beta(D,(a_{i_v}D_{i_v}\cap a_{k_v}D_{k_v})_{Q_v})-1)\lambda_{(a_{i_v}D_{i_v}\cap a_{k_v}D_{k_v})_{Q_v},v}(P)\\
&\leq (3+\varepsilon)h_D(P).\label{ikeq}
\end{aligned}
\end{equation}

Let $b = \frac{1}{3\beta_0-1}$. We have $0<b\leq 1$ and by definition of $\beta_0$, for any $i,j\in\{1,2,3\}, i\neq j$, and $Q\in (D_1\cap D_2\cap D_3)(\kbar)$, 
\begin{align*}
b\bigg( 3\beta(D,(a_{i}D_{i}\cap a_{j}D_{j})_{Q})-1 \bigg) \geq 1. 
\end{align*}
Note also that 
\begin{align*}
\lambda_{(a_{i_v}D_{i_v}\cap a_{j_v}D_{j_v})_{Q_v},v}(P)\geq \lambda_{(a_{i_v}D_{i_v}\cap a_{k_v}D_{k_v})_{Q_v},v}(P)+O(1).
\end{align*}

Let 
\begin{align*}
\gamma=3\min_{\substack{\{i,j,k\}=\{1,2,3\}\\Q\in (D_1\cap D_2\cap D_3)(\kbar)}}\frac{(\beta(D,(a_iD_i\cap a_jD_j)_Q)-1)+(\beta(D,(a_iD_i\cap a_kD_k)_Q)-1)(3\beta_0-2)}{3\beta_0-1}.
\end{align*}
By hypothesis, $\gamma>0$. Fixing $Q\in (D_1\cap D_2\cap D_3)(\kbar)$, $P$, and $v$, for $i,j\in\{1,2,3\}$ let 
\begin{align*}
\beta_{ij}&=\beta(D,(a_{i}D_{i}\cap a_{j}D_{j})_{Q}),\\
\lambda_{ij}&=\lambda_{(a_{i}D_{i}\cap a_{j}D_{j})_{Q},v}(P). 
\end{align*}
Then assuming $\lambda_{ij}\geq \lambda_{ik}$, we compute
\begin{align*}
b(3\beta_{ij}-1)\lambda_{ij}+(1-b)(3\beta_{ik}-1)\lambda_{ik}&\geq \lambda_{ij}+(b(3\beta_{ij}-1)-1+(1-b)(3\beta_{ik}-1))\lambda_{ik}\\
&\geq \lambda_{ij}+\left(1+\frac{3((\beta_{ij}-1)+(\beta_{ik}-1)(3\beta_0-2))}{3\beta_0-1}\right)\lambda_{ik}\\
&\geq \lambda_{ij}+(1+\gamma)\lambda_{ik}.
\end{align*}

Multiplying \eqref{ijeq} by $b$, \eqref{ikeq} by $1-b$, and summing and using the above calculation, we obtain
\begin{equation*}
\begin{aligned}
&~~~\sum_{v\in S'} \lambda_{a_{i_v}D_{i_v},v}(P)+(3\beta(D,(a_{i_v}D_{i_v}\cap a_{j_v}D_{j_v})_{Q_v})-1)\lambda_{(a_{i_v}D_{i_v}\cap a_{j_v}D_{j_v})_{Q_v},v}(P)\\
&+\sum_{v\in S''} \lambda_{a_{i_v}D_{i_v},v}(P)+\lambda_{(a_{i_v}D_{i_v}\cap a_{j_v}D_{j_v})_{Q_v},v}(P)+(1+\gamma)\lambda_{(a_{i_v}D_{i_v}\cap a_{k_v}D_{k_v})_{Q_v},v}(P)\\
&\leq (3+\varepsilon)h_D(P)+O(1).
\end{aligned}
\end{equation*}
Then there exists a proper Zariski-closed subset $Z\subset X$ such that if $P\not\in Z$, up to $O(1)$,
\begin{equation*}
\begin{aligned}
&~~~(3-\varepsilon)h_D(P)+\gamma\sum_{v\in S} \lambda_{a_1D_1\cap a_2D_2\cap a_3D_3,v}(P)\\
&\leq (3-\varepsilon)h_D(P)+\gamma\sum_{v\in S''} \lambda_{(a_{i_v}D_{i_v}\cap a_{k_v}D_{k_v})_{Q_v},v}(P)\\
&\leq \sum_{v\in S'} \left(\lambda_{a_{i_v}D_{i_v},v}(P)+\lambda_{(a_{i_v}D_{i_v}\cap a_{j_v}D_{j_v})_{Q_v},v}(P) \right)\\
&~~~+\sum_{v\in S''} \left(\lambda_{a_{i_v}D_{i_v},v}(P)+\lambda_{(a_{i_v}D_{i_v}\cap a_{j_v}D_{j_v})_{Q_v},v}(P)+(1+\gamma)\lambda_{(a_{i_v}D_{i_v}\cap a_{k_v}D_{k_v})_{Q_v},v}(P) \right)\\
&\leq (3+\varepsilon)h_D(P).
\end{aligned}
\end{equation*}

Therefore, if $P\in R\setminus Z$,
\begin{align*}
\sum_{v\in S}\lambda_{a_1D_1\cap a_2D_2\cap a_3D_3,v}(P)<\frac{2\varepsilon}{\gamma}h_D(P)+O(1).
\end{align*}
Equivalently, since $R$ is a set of $(D_1+D_2+D_3,S)$-integral points and $\Supp (a_1D_1\cap a_2D_2\cap a_3D_3)\subset \Supp (D_1+D_2+D_3)$, for given $\varepsilon>0$, there exists a proper Zariski-closed subset $Z\subset X$ such that for $P\in R\setminus Z$,
\begin{align}
\label{tripineq}
h_{a_1D_1\cap a_2D_2\cap a_3D_3}(P)<\varepsilon h_D(P)+O(1),
\end{align}
finishing the proof of the claim.

Let $Q$ be some point in $\Supp D_1\cap D_2\cap D_3$ (which is nonempty by assumption).   Let $\pi:\tilde{X}\to X$ be the blowup at $Q$, with exceptional divisor $E$. If $R$ is a set of $(D_1+D_2+D_3,S)$-integral points in $X(K)$, then $\pi^{-1}(R)\setminus E$ is a set of $(\pi^*D_1+\pi^*D_2+\pi^*D_3,S)$-integral points in $\tilde{X}(K)$. So it suffices to show that there exists a proper Zariski-closed subset $\tilde{Z}$ of $\tilde{X}$ such that for any set $\tilde{R}$ of $(\pi^*D_1+\pi^*D_2+\pi^*D_3,S)$-integral points in $\tilde{X}(K)$, the set $\tilde{R}\setminus \tilde{Z}$ is finite.  Define the effective Cartier divisors 
\begin{align*}
D_i'=a_i\pi^*D_i-E, \quad i=1,2,3.
\end{align*}

Let $\tilde{R}$ be a set of $(\pi^*D_1+\pi^*D_2+\pi^*D_3,S)$-integral points in $\tilde{X}(K)$ (and hence a set of $(D_1'+D_2'+D_3',S)$-integral points). For $P\in \tilde{R}$ and $\varepsilon>0$, we have
\begin{align}
\label{strictheighteq}
\sum_{v\in S} \left(\lambda_{D_1',v}(P)+\lambda_{D_2',v}(P)+\lambda_{D_3',v}(P)\right)&=h_{D_1'}(P)+h_{D_2'}(P)+h_{D_3'}(P)+O(1)\\
&\geq (3-\varepsilon)h_{\pi^*D}(P)-(3-\varepsilon)h_E(P)+O(1)\notag,
\end{align}
where the $O(1)$ possibly depends on $\tilde{R}$ (but not $P$). 

We now bound the left-hand side of the above equation. As in previous arguments, it suffices to bound a sum of the form
\begin{align*}
\sum_{v\in S} \left(\lambda_{D_{i_v}',v}(P)+\lambda_{D_{i_v}'\cap D_{j_v}',v}(P)+\lambda_{D_{i_v}'\cap D_{j_v}'\cap D_{k_v}',v}(P)\right),
\end{align*}
where $\{i_v,j_v,k_v\}=\{1,2,3\}$ for $v\in S$. We first note that it follows from \eqref{tripineq} and functoriality that given $\varepsilon>0$, there exists  a proper Zariski-closed subset $\tilde{Z}\subset \tilde{X}$ such that
\begin{align*}
\sum_{v\in S} \lambda_{D_{i_v}'\cap D_{j_v}'\cap D_{k_v}',v}(P)<\varepsilon h_{\pi^*D}(P)+O(1)
\end{align*}
for all $P\in \tilde{R}\setminus \tilde{Z}$. For the same reasons, we may choose $\tilde{Z}$ so that we also have
\begin{align}
\label{EpiDineq}
h_E(P)<\varepsilon h_{\pi^*D}(P)+O(1)
\end{align}
for all $P\in \tilde{R}\setminus \tilde{Z}$. We can write (as closed subschemes)
\begin{align*}
D_{i_v}'\cap D_{j_v}'=Y_{0,v}+Y_{1,v},
\end{align*}
where $\Supp \pi(Y_{1,v})=Q$, $\dim Y_{0,v}=0$, and $Y_{0,v}\cap E=\emptyset$. We have (for an appropriate $\tilde{Z}$)
\begin{align*}
\sum_{v\in S} \lambda_{Y_{1,v},v}(P)<\varepsilon h_{\pi^*D}(P)+O(1)
\end{align*}
for all $P\in \tilde{R}\setminus \tilde{Z}$, and so
\begin{align*}
\lambda_{D_{i_v}'\cap D_{j_v}',v}(P)\leq \lambda_{Y_{0,v},v}(P)+\varepsilon h_{\pi^*D}(P)+O(1)
\end{align*}
for all $P\in \tilde{R}\setminus \tilde{Z}$.

Let $\delta\in\mathbb{Q}, \delta>0$, be chosen as in Lemma \ref{exclemma} so that \eqref{exceqn} holds, and let 
\begin{align*}
\gamma'=\beta(\pi^*D-\delta E, \pi^*D-E)-\frac{1}{3}>0.
\end{align*}

Note that
\begin{align*}
\beta(\pi^*D-\delta E, Y_{0,v})\geq \beta(\pi^*D-\delta E, D_{i,v}')+\beta(\pi^*D-\delta E, D_{j,v}'). 
\end{align*}

This does not follow directly from Lemma \ref{betalcm} (since $D_{i,v}'$ and $D_{j,v}'$ may not intersect properly above $Q$, along the component $Y_{1,v}$), but it follows from a slight modification to the proof of that lemma as $D_i'$ and $D_j'$ intersect properly in a neighborhood of the zero-dimensional closed subscheme $Y_{0,v}$.

Using Theorem \ref{BetaMain}, for any $\varepsilon>0$ we find that for $P\in \tilde{R}$ outside a proper Zariski-closed subset of $\tilde{X}$ (and up to $O(1)$)
\begin{align*}
&~~~\left(\frac{1}{3}+\gamma'\right)\sum_{v\in S} \left(\lambda_{D_{i_v}',v}(P)+\lambda_{D_{i_v}'\cap D_{j_v}',v}(P)+\lambda_{D_{i_v}'\cap D_{j_v}'\cap D_{k_v}',v}(P)\right) \\
&\leq\sum_{v\in S} \left(\beta(\pi^*D-\delta E, D_{i,v}')\lambda_{D_{i_v}',v}(P)+(\beta(\pi^*D-\delta E, Y_{0,v})-\beta(\pi^*D-\delta E, D_{i,v}'))\lambda_{Y_{0,v},v}(P)\right)\\
&~~~+\varepsilon h_{\pi^*D}(P)\\
&\leq (1+\varepsilon)h_{\pi^*D-\delta E}(P)+\varepsilon h_{\pi^*D}(P)\\
&\leq (1+2\varepsilon) h_{\pi^*D}(P).
\end{align*}
Therefore, for some positive $\delta'>0$, for $P\in \tilde{R}$ outside a proper Zariski-closed subset of $\tilde{X}$ we have
\begin{align*}
\sum_{v\in S} \left(\lambda_{D_1',v}(P)+\lambda_{D_2',v}(P)+\lambda_{D_3',v}(P)\right)&\leq (3-\delta') h_{\pi^*D}(P).
\end{align*}

On the other hand, by \eqref{strictheighteq} and \eqref{EpiDineq} (taking $\varepsilon$ sufficiently small), for $P\in \tilde{R}$ outside a proper Zariski-closed subset of $\tilde{X}$,
\begin{align*}
\sum_{v\in S} \left(\lambda_{D_1',v}(P)+\lambda_{D_2',v}(P)+\lambda_{D_3',v}(P)\right)&\geq \left(3 - \frac{\delta'}{2} \right) h_{\pi^*D}(P).
\end{align*}

Finally, since $\pi^*D$ is big, combining the above inequalities with an application of Northcott's theorem (for big divisors) gives that there exists a proper Zariski-closed subset $\tilde{Z}$ of $\tilde{X}$ such that $\tilde{R}\setminus \tilde{Z}$ is finite.
\end{proof}

We now give two examples addressing the sharpness of conditions \eqref{beta2cond} and \eqref{49ineq2}. Both examples are based on the following construction:
\begin{example}
\label{pencil}
Let $K$ be a number field and let $S$ be a finite set of places of $K$ containing the archimedean places with $|S|\geq 2$. Let $\Lambda$ be a linear pencil of curves in a projective surface $X$ such that the general member of $\Lambda$ is isomorphic to $\mathbb{P}^1$, and the pencil $\Lambda$ has exactly two base points $Q_1,Q_2$, which, after possibly enlarging $K$, we may assume are $K$-rational (our construction would also work if there is a single base point). Let $D_1,D_2,D_3\in \Lambda$ be distinct elements. Then we claim that the conclusion of Theorem \ref{nonGeneral} does not hold, i.e., there does not exist a proper Zariski-closed subset $Z\subset X$ such that for any set $R\subset X(K)$ of $(D_1 + D_2 + D_3, S)$-integral points, the set $R\setminus Z$ is finite.   Let $C\in \Lambda \setminus\{D_1,D_2,D_3\}$ be a general member. We claim that $C$ contains an infinite set of $(D_1+D_2+D_3,S)$-integral points. Indeed, any two elements of $\Lambda$ intersect only at the points $Q_1$ and $Q_2$, and therefore $C\cap (D_1\cup D_2\cup D_3)=\{Q_1,Q_2\}$. Since $C\cong \mathbb{P}^1$, we have $C\setminus (D_1\cup D_2\cup D_3)\cong \mathbb{G}_m$, and $C$ will contain an infinite set $R$ of $(D_1+D_2+D_3,S)$-integral points (we use $|S|\geq 2$ here, so that $\CO_{K,S}^*$ is infinite). Since the union of such elements $C\in \Lambda$ is Zariski dense in $X$, the conclusion of Theorem \ref{nonGeneral} does not hold. \footnote{We do not assert that there exists a Zariski dense set of  $(D_1 + D_2 + D_3, S)$-integral points; in fact, the pencil yields a morphism $X\setminus \{D_1\cup D_2\cup D_3\}\to\mathbb{P}^1\setminus \{0,1,\infty\}$ and Siegel's theorem shows that there is no Zariski dense set of $(D_1 + D_2 + D_3, S)$-integral points in $X(K)$.} 
\end{example}

We first give an example showing that the factor $\frac{4}{9}$ in the intersection condition \eqref{49ineq} (or \eqref{49ineq2}) cannot be replaced by anything larger than $\frac{1}{2}$.

\begin{example}\label{12example}
Consider the pencil of plane conics $\Lambda=\{C_\lambda\mid \lambda\in K\}$, where  $C_\lambda  = \{y^2 - \lambda xz = 0\}$. Then any two distinct curves $C_\lambda, C_\lambda'\in \Lambda$ intersect precisely at the two points $Q_1=[0:0:1], Q_2=[1:0:0]$, each with multiplicity $2$. By Example \ref{pencil}, if $D_1,D_2,D_3\in \Lambda$ are distinct elements, then the conclusion of Theorem \ref{nonGeneral} does not hold (for appropriate $K$ and $S$), and we  note that
\begin{align*}
(D_i.D_j)_{Q_1}=(D_i.D_j)_{Q_2}=2=\frac{1}{2}(D_i.D_j).
\end{align*}

\end{example}

Somewhat surprisingly, the next example shows that the beta constant condition in Theorem \ref{nonGeneral} is sharp, in the sense that the condition
\begin{align*}
\beta(D,(a_iD_i\cap a_jD_j)_Q)>1
\end{align*}
cannot be replaced by the inequality
\begin{align*}
\beta(D,(a_iD_i\cap a_jD_j)_Q)\geq 1.
\end{align*}

\begin{example}\label{beta1example}
Let $D_1$ and $D_2$ be two distinct irreducible curves of type $(1,1)$ on $\mathbb{P}^1\times \mathbb{P}^1$, defined over a number field $K$, intersecting in two distinct $K$-rational points $Q_1$ and $Q_2$. Let $\Lambda$ be the pencil of curves containing $D_1$ and $D_2$, and let $D_3\in \Lambda\setminus\{D_1,D_2\}$ be another irreducible element of the pencil.  

For any point $Q\in (\mathbb{P}^1\times \mathbb{P}^1)(K)$ (viewed as a closed subscheme with the reduced induced structure) an elementary computation gives
\begin{align*}
\beta(D_1,Q)=1.
\end{align*}
Since $D_i\cap D_j=Q_1+Q_2$ (as closed subschemes) when $i\neq j$, we see from Example \ref{pencil} (with this pencil) that the conclusion of Theorem \ref{nonGeneral} does not hold in general when \eqref{beta1cond} is replaced by $\beta(D,(a_iD_i\cap a_jD_j)_Q)\geq 1$.

\end{example}

\subsection{Families of Unit Equations}

The unit equation theorem, proved by Siegel (when $S$ is the set of archimedean places) and Mahler, is a fundamental and ubiquitous result in number theory:
\begin{theorem}[Siegel-Mahler]
    Let $K$ be a number field and let $S$ be a finite set of places of $K$ containing the archimedean places. Let $\cO_{K,S}$ be the ring of $S$-integers of $K$ and let $\cO_{K,S}^*$ be the group of $S$-units of $K$. Let $a,b,c \in K^*$. The $S$-unit equation
    \begin{align*}
        au+bv=c, \quad u,v \in \cO_{K,S}^*,
    \end{align*}
has only finitely many solutions. 
\end{theorem}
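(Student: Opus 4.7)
The plan is to derive Siegel--Mahler directly from Schmidt's Subspace Theorem (Theorem \ref{subspace}) on $\mathbb{P}^1$. I would associate to each solution $(u,v)\in(\cO_{K,S}^*)^2$ the projective point $P=[u:v]\in\mathbb{P}^1(K)$ and use the three $K$-rational hyperplanes $H_0=\{X_0=0\}$, $H_1=\{X_1=0\}$, and $H_2=\{aX_0+bX_1=0\}$ of $\mathbb{P}^1$. Since $a,b\in K^*$, these points are pairwise distinct, and hence in general position.

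The key input is a local-height computation. For each place $w\in M_K$, standard formulas give, up to bounded terms independent of the solution,
\begin{align*}
\lambda_{H_0,w}(P) &= \log\max(|u|_w,|v|_w)-\log|u|_w,\\
\lambda_{H_1,w}(P) &= \log\max(|u|_w,|v|_w)-\log|v|_w,\\
\lambda_{H_2,w}(P) &= \log\max(|u|_w,|v|_w)-\log|au+bv|_w \;=\; \log\max(|u|_w,|v|_w)-\log|c|_w,
\end{align*}
where the last equality uses $au+bv=c$. For $w\notin S$ the $S$-unit hypothesis forces $|u|_w=|v|_w=1$, so the first two local heights vanish and the third equals $-\log|c|_w$. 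Summing over $w\in S$ and applying the product formula to each of $u$, $v$, and $c$, I would obtain
\[
\sum_{w\in S}\lambda_{H_i,w}(P)=h(P)+O(1),\qquad i=0,1,2,
\]
with the $O(1)$ independent of the solution; consequently $\sum_{w\in S}\sum_{i=0}^{2}\lambda_{H_i,w}(P)=3h(P)+O(1)$.

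Theorem \ref{subspace} with $n=1$ permits only two hyperplanes per place, so at each $w\in S$ I would discard the $H_i$ with the smallest local height. Since $H_0,H_1,H_2$ are three distinct points, any two have empty intersection, and so by Theorem \ref{thm: Silverman} the minimum of the three local heights at $P$ is $O(1)$ independent of $P$. Running Schmidt's theorem separately over the (finitely many) possible discard patterns $(i_w)_{w\in S}$ and taking the union of exceptional sets---each a finite set of points of $\mathbb{P}^1$---I would obtain
\[
3h(P)+O(1) \;\leq\; (2+\varepsilon)h(P)
\]
for all $P$ outside a finite subset of $\mathbb{P}^1(K)$. Choosing $0<\varepsilon<1$ bounds $h(P)$, and Northcott's theorem then yields finiteness of the solution set.

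The only delicate point is the off-$S$ accounting: the argument hinges on the $S$-unit hypothesis, which via the product formula converts each partial sum $\sum_{w\in S}\lambda_{H_i,w}(P)$ into the full global height $h(P)$ up to $O(1)$. Everything else is routine case analysis and an appeal to Northcott's theorem.
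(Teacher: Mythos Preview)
The paper does not give a proof of this theorem; it is quoted as a classical result of Siegel and Mahler and used as background. There is therefore no ``paper's proof'' to compare against.

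Your argument is correct and is one of the standard modern routes to the $S$-unit equation: the case $n=1$ of Theorem~\ref{subspace} is essentially Roth's theorem over number fields, and the three points $0,\infty,-b/a$ on $\mathbb{P}^1$ serve as the hyperplanes. The $S$-unit hypothesis plus the product formula indeed yields $\sum_{w\in S}\lambda_{H_i,w}(P)=h(P)+O(1)$ for each $i$, and on $\mathbb{P}^1$ at most one of the three local heights can be large at any given place, so discarding the smallest costs only $O(1)$. The case split over the $3^{|S|}$ discard patterns is necessary (Theorem~\ref{subspace} requires the hyperplanes at each place to be fixed independently of $P$) and harmless, since for $n=1$ the exceptional set is a finite set of points.

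One small point you should make explicit: bounding $h([u:v])$ only gives finitely many ratios $u/v$, not finitely many pairs $(u,v)$. But for each fixed ratio $\rho=u/v$ the equation $au+bv=c$ becomes $(a\rho+b)v=c$, which determines $v$ (and hence $u$) uniquely unless $a\rho+b=0$, which would force $c=0$, contrary to hypothesis. So finitely many $P$ do yield finitely many solutions.
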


From another viewpoint, the theorem is equivalent to Siegel's theorem on integral points on the affine curve $\bP^1 \setminus \{0,1,\infty\}$.  In this section, we study the one-parameter family of unit equations
\begin{align*}
f_1(t)u+f_2(t)v=f_3(t), \quad t\in \CO_{K,S}, u,v\in \CO_{K,S}^*,
\end{align*}
where $f_1,f_2,f_3$ are polynomials over a number field $K$. This equation was treated in the case $\deg f_1=\deg f_2=\deg f_3$ by Corvaja and Zannier \cite{CZ06, CZ10}, and in the case $\deg f_1+\deg f_2=\deg f_3$ by the second author \cite{Lev06}. Applying the results of the previous section to certain surfaces, we handle a wide range of new values of the triple $(d_1,d_2,d_3)=(\deg f_1,\deg f_2,\deg f_3)$.

\begin{theorem}
\label{thfamily}
Let $f_1,f_2,f_3\in K[t]$ be nonconstant polynomials without a common zero of degrees $d_1,d_2,d_3$, respectively.  Let $\{i_1,i_2,i_3\}=\{1,2,3\}$ be such that $d_{i_1}\geq d_{i_2}\geq d_{i_3}$ and suppose that
\begin{align}
\label{familyineq}
\left(\sqrt{\frac{d_{i_1}+1}{d_{i_1}+1-d_{i_2}}}-1\right)\left(\sqrt{\frac{{d_{i_1}+1}}{d_{i_1}+1-d_{i_3}}}-1\right)>\frac{1}{4}.
\end{align}
In particular, \eqref{familyineq} holds when
\begin{align*}
\max_i (d_i+1)<\frac{9}{5}\min_i d_i.
\end{align*}
Then the set of solutions $(t,u,v)\in\mathbb{A}^3(K)$ of the equation
\begin{align}
\label{funiteq}
f_1(t)u+f_2(t)v=f_3(t), \quad t\in \CO_{K,S}, u,v\in \CO_{K,S}^*,
\end{align}
is contained in a finite number of rational curves in $\mathbb{A}^3$.
\end{theorem}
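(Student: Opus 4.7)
The plan is to interpret the solutions $(t,u,v)$ as $(D_1+D_2+D_3,S)$-integral points on a carefully chosen projective surface and to invoke Theorem~\ref{nonGeneral}. Relabel the polynomials so that $d_1 = d_{i_1} \geq d_2 \geq d_3$ and set $d = d_1$. I would take $X \subset \mathbb{P}^1 \times \mathbb{P}^2$ to be the bidegree $(d,1)$ surface cut out by
$F_1(s,t)x_1 + F_2(s,t)s^{d-d_2}x_2 - F_3(s,t)s^{d-d_3}x_0 = 0$,
where $F_i$ is the degree-$d_i$ homogenization of $f_i$. The second projection $\pi_2 \colon X \to \mathbb{P}^2$ is finite of degree $d$, so $H := \pi_2^*\CO_{\mathbb{P}^2}(1)$ is ample on $X$ with $H^2 = d+1 = d_{i_1}+1$, while the first projection realizes $X$ as a Hirzebruch surface. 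An $S$-integral solution $(t,u,v)$ corresponds to a $K$-point of $X$ avoiding the three divisors $\{x_i = 0\} \cap X$ ($i=0,1,2$) together with the fiber $F_\infty = \{s=0\} \cap X$; crucially, $\pi_2^*\{x_1=0\}$ is reducible and contains $F_\infty$ with multiplicity $d-d_2$, so the union of the three pullbacks already covers the whole boundary.

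Since the three natural candidates $\pi_2^*\{x_i = 0\}$ have empty triple intersection on $X$, I would next choose effective divisors $D_1, D_2, D_3$ on $X$ (or on a small blow-up of $X$) in the class of $H$ (up to the rescalings $a_i$ in Theorem~\ref{nonGeneral}) with three properties: (a)~their union still covers the boundary, (b)~they pairwise intersect properly, and (c)~they share a common point $Q \in F_\infty$. One option is to deform two of the $\pi_2^*\{x_i=0\}$ within their linear system so as to force a common base point on $F_\infty$; another is to blow $X$ up at a suitable point of $F_\infty$ and work with appropriate strict transforms. In local coordinates on the Hirzebruch bundle adapted to $Q$, the explicit form of the defining equation of $X$ should yield local intersection multiplicities $\mu_{i_1 i_j} = (a_{i_1}D_{i_1} \cdot a_{i_j}D_{i_j})_Q = d_{i_1}+1-d_{i_j}$ for $j \in \{2,3\}$, while $D^2 = d_{i_1}+1$.

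The main obstacle is verifying the beta constant condition \eqref{beta1cond} at $Q$ for every permutation of indices. Since $D_{i_1} \supset (D_{i_1} \cap D_{i_j})_Q$ is a regular chain, Lemma~\ref{betamultineq} gives $3\beta(D,(a_{i_1}D_{i_1} \cap a_{i_j}D_{i_j})_Q) - 1 \geq \sqrt{(d_{i_1}+1)/(d_{i_1}+1-d_{i_j})}$ for $j = 2, 3$. Plugging into \eqref{beta1cond}, the critical permutation is $(i,j,k) = (i_1, i_2, i_3)$: the pair $(i_1, i_2)$ contributes through $\beta_{ij}-1$ and the pair $(i_1, i_3)$ through the factor $(\beta_{ik}-1)(3\beta_0 - 2)$, which is precisely what makes the product form of \eqref{familyineq} appear. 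I would need to check that the other permutations and any other common points of $D_1 \cap D_2 \cap D_3$ do not tighten the requirement, and that the beta constant for the pair $(i_2,i_3)$ (not involving the maximal-degree index) remains at least $\tfrac{2}{3}$ so as to ensure $\beta_0 \geq \tfrac{2}{3}$. This reduction is the delicate calculation on which the whole argument hinges.

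Once \eqref{beta1cond} is verified, Theorem~\ref{nonGeneral} yields a proper Zariski-closed $Z \subset X$ outside of which $(D_1 + D_2 + D_3, S)$-integral points are finite. Because $X$ is a rational ruled surface, every irreducible component of $Z$ is a rational curve (a fiber or section of the ruling, or a boundary component, or their strict transforms), and pulling back through the birational identification of $X$ minus its boundary with the affine surface $V = \{f_1u + f_2v = f_3\} \subset \mathbb{A}^3$ places the solutions in a finite union of rational curves in $\mathbb{A}^3$. Finally, the stated implication $\max_i(d_i+1) < \tfrac{9}{5} \min_i d_i \Rightarrow \eqref{familyineq}$ is immediate: each factor $\sqrt{(d_{i_1}+1)/(d_{i_1}+1-d_{i_j})} - 1$ then exceeds $\tfrac{1}{2}$, so their product exceeds $\tfrac{1}{4}$.
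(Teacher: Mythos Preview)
Your proposal has a real gap at the step where you try to force the three divisors to share a common point. On your surface $X\subset\mathbb{P}^1\times\mathbb{P}^2$ the natural boundary divisors $\pi_2^*\{x_i=0\}$ have empty common intersection, as you observe, and neither of the fixes you suggest is compatible with the integrality hypothesis that Theorem~\ref{nonGeneral} requires. If you deform two of the $\pi_2^*\{x_i=0\}$ to linearly equivalent divisors passing through a chosen $Q\in F_\infty$, the set $R$ of solutions is no longer a set of $(D_1'+D_2'+D_3',S)$-integral points: integrality of $R$ is tied to the \emph{specific} coordinate divisors (they encode that $u,v$ are units and $t$ is an integer), not to generic members of their linear systems. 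Your condition~(a) cannot be met by a genuine deformation, because the relevant boundary components are irreducible. Blowing up a point of $F_\infty$ does not help either, since the exceptional divisor is not part of the boundary and the pulled-back divisors still have empty common intersection. (Incidentally, on your $X$ one has $H^2=d$, not $d+1$, so the numerics would also be off.)

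The paper avoids this difficulty by a different device: it first replaces \eqref{funiteq} by the equation
\[
f_1(t)u+f_2(t)v^{\,d_1+1-d_2}=f_3(t),
\]
which is permissible after enlarging $K$ and $S$ so that every $S$-unit has a $(d_1{+}1{-}d_2)$-th root. The modified equation homogenizes to a degree $d_1+1$ hypersurface $X\subset\mathbb{P}^3$, and now the three coordinate divisors $D_i=\{x_i=0\}|_X$ \emph{automatically} meet at the single smooth point $P_0=[1{:}0{:}0{:}0]$, with $(D_i.D_j)_{P_0}=d_1+1-d_k$ and $D^2=d_1+1$. No deformation or blow-up is needed. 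The beta bound then comes from Lemma~\ref{betamultsimple} (rather than Lemma~\ref{betamultineq}), giving $\beta(D,(D_i\cap D_j)_{P_0})\geq \frac{2}{3}\sqrt{(d_1+1)/(d_1+1-d_k)}$, and substituting these lower bounds into \eqref{beta1cond} reduces directly to \eqref{familyineq}.

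There is also a smaller error at the end: the components of the exceptional set $Z$ need not be rational merely because $X$ is a rational ruled surface (such surfaces contain curves of every genus). The correct argument, used in the paper, is Siegel's theorem: since the $D_i$ are ample, any curve in $X$ carrying infinitely many $(D_1+D_2+D_3,S)$-integral points must have genus~$0$.
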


We note that if $d_{i_1}>d_{i_2}$ in Theorem \ref{thfamily}, then by \cite[Lemma~5]{Lev06}, one may drop the integrality condition on $t$ in \eqref{funiteq} (i.e., consider solutions with $t\in K$).

\begin{proof}
We first remark that the statement of the theorem is independent of the ordering of $f_1,f_2,f_3$; this follows from noting, for example, that if $(t_0,u_0,v_0)\in \CO_{K,S}\times \CO_{K,S}^*\times \CO_{K,S}^*$ is a solution to $f_1(t)u+f_3(t)v=f_2(t)$, then $(t_0,-u_0/v_0,1/v_0)\in \CO_{K,S}\times \CO_{K,S}^*\times \CO_{K,S}^*$ is a solution to $f_1(t)u+f_2(t)v=f_3(t)$, and this relation corresponds to a birational automorphism of $\mathbb{A}^3$. Thus, after permuting the $f_i$, we may assume that $d_1\geq d_2\geq d_3>0$, $d_{i_j}=d_j$, $j=1,2,3$, and 
\begin{align*}
\left(\sqrt{\frac{d_1+1}{d_{1}+1-d_{2}}}-1\right)\left(\sqrt{\frac{{d_1+1}}{d_{1}+1-d_{3}}}-1\right)>\frac{1}{4}.
\end{align*}
In place of \eqref{funiteq}, we will actually study the slightly modified equation
\begin{align}
\label{funiteq2}
f_1(t)u+f_2(t)v^{d_1+1-d_2}=f_3(t), \quad t\in \CO_{K,S}, u,v\in \CO_{K,S}^*.
\end{align}
Since $\CO_{K,S}^*$ is finitely generated, we can find a number field $L$ and a finite set of places $S'$ of $L$ such that every element of $\CO_{K,S}^*$ has a $(d_1+1-d_2)$-th root in $\CO_{L,S'}^*$. Then \eqref{funiteq} reduces to studying the equation \eqref{funiteq2} (with $(K,S)$ replaced by $(L,S')$).

We let $F_i\in K[x_0,x_1,x_2,x_3]$ be the homogeneous polynomial $F_i=f_i(x_0/x_3)x_3^{d_i}$, $i=1,2,3$, and let $X$ be the hypersurface in $\mathbb{P}^3$ defined by the equation
\begin{align*}
F:=x_1F_1+x_2^{d_1+1-d_2}F_2-x_3^{d_1+1-d_3}F_3=0.
\end{align*}

Since the polynomials $f_i$ do not have a common zero, it follows easily that $F$ is irreducible in $\kbar[x_0,x_1,x_2,x_3]$ (and $X$ is a projective surface).

Let $H_i$ be the hyperplane of $\mathbb{P}^3$ defined by $x_i=0$, $i=1,2,3$, and let $D_i=H_i|_X$ be the divisor on $X$ defined by $x_i=0$, $i=1,2,3$. Since $X$ is a hypersurface in $\mathbb{P}^3$ and $H_i,H_j,X$, $i\neq j$, are in general position on $\mathbb{P}^3$ (as $F_i(1,0,0,0)\neq 0$), by Remark \ref{remgenproper}, $D_i$ and $D_j$ intersect properly on $X$ if $i\neq j$. Letting $P_0=[1:0:0:0]\in X(K)$, we have $D_1\cap D_2\cap D_3=\{P_0\}\neq \emptyset$, and by construction $D_1\sim D_2 \sim D_3$.  We also set $D=D_1$. 

We note that $P_0$ is a nonsingular point of $X$, with maximal ideal $(x_2,x_3)$ in the local ring at $P_0$. Then if $\{i,j,k\}=\{1,2,3\}$, from the equation for $F$ we find $(x_i,x_j)\CO_{P_0}=(x_i,x_j,x_k^{d_1+1-d_k})\CO_{P_0}$. It follows that
\begin{align*}
(D_i.D_j)_{P_0}=d_1+1-d_k.
\end{align*}
Note also that $D^2=\deg X=d_1+1$. Then by Lemma \ref{betamultsimple},
\begin{align*}
\beta(D,(D_i\cap D_j)_{P_0})&\geq \frac{2}{3}\sqrt{\frac{D^2}{(D_i.D_j)_{P_0}}}\\
&\geq  \frac{2}{3}\sqrt{\frac{d_1+1}{d_1+1-d_k}}.
\end{align*}

Then it is not hard to show that the condition \eqref{beta1cond} of Theorem \ref{nonGeneral} is satisfied if
\begin{align*}
\left(\frac{2}{3}\sqrt{\frac{d_1+1}{d_1+1-d_3}}-1\right)+\left(\frac{2}{3}\sqrt{\frac{d_1+1}{d_1+1-d_2}}-1\right)\left(2\sqrt{\frac{d_1+1}{d_1+1-d_3}}-2\right)>0,
\end{align*}
which is equivalent to \eqref{familyineq}. Now we let
\begin{align*}
R=\{[t:u:v:1]\in X(K)\mid f_1(t)u+f_2(t)v^{d_1+1-d_2}=f_3(t), (t,u,v)\in \CO_{K,S}\times \CO_{K,S}^* \times \CO_{K,S}^*\}.  
\end{align*}
Then $R$ is easily seen to be a set of $(D_1+D_2+D_3,S)$-integral points on $X$. Now under the assumption \eqref{familyineq}, we can apply Theorem \ref{nonGeneral} with the divisors $D_1,D_2,D_3$ on $X$ and conclude that $R$ lies in a finite union of curves on $X\subset\mathbb{P}^3$. Finally, we note that since the divisors $D_i$, $i=1,2,3$ are ample on $X$, Siegel's theorem on integral points on curves implies that if $C$ is a curve in $X$ and $C\cap R$ is infinite, then $C$ is a rational curve, and thus we may take a finite union of rational curves in the conclusion of the theorem.
\end{proof}

\section*{Acknowledgment}
We would like to thank Yizhen Zhao for helpful discussions. 

\bibliographystyle{amsalpha}
\bibliography{refFile}{}

\providecommand{\bysame}{\leavevmode\hbox to3em{\hrulefill}\thinspace}
\providecommand{\MR}{\relax\ifhmode\unskip\space\fi MR }
% \MRhref is called by the amsart/book/proc definition of \MR.
\providecommand{\MRhref}[2]{%
  \href{http://www.ams.org/mathscinet-getitem?mr=#1}{#2}
}
\providecommand{\href}[2]{#2}
\begin{thebibliography}{BCZ03}

\bibitem[Aut09]{Aut09}
Pascal Autissier, \emph{G\'{e}om\'{e}tries, points entiers et courbes
  enti\`eres}, Ann. Sci. \'{E}c. Norm. Sup\'{e}r. (4) \textbf{42} (2009),
  no.~2, 221--239. \MR{2518077}

\bibitem[Aut11]{Aut11}
\bysame, \emph{Sur la non-densit\'{e} des points entiers}, Duke Math. J.
  \textbf{158} (2011), no.~1, 13--27. \MR{2794367}

\bibitem[BCZ03]{BCZ}
Yann Bugeaud, Pietro Corvaja, and Umberto Zannier, \emph{An upper bound for the
  {G}.{C}.{D}. of {$a^n-1$} and {$b^n-1$}}, Math. Z. \textbf{243} (2003),
  no.~1, 79--84. \MR{1953049}

\bibitem[CEL01]{CEL}
Steven~Dale Cutkosky, Lawrence Ein, and Robert Lazarsfeld, \emph{Positivity and
  complexity of ideal sheaves}, Math. Ann. \textbf{321} (2001), no.~2,
  213--234. \MR{1866486}

\bibitem[CZ00]{CZ00}
Pietro Corvaja and Umberto Zannier, \emph{On the {D}iophantine equation
  {$f(a^m,y)=b^n$}}, Acta Arith. \textbf{94} (2000), no.~1, 25--40.
  \MR{1762454}

\bibitem[CZ04a]{CZ04'}
\bysame, \emph{On a general {T}hue's equation}, Amer. J. Math. \textbf{126}
  (2004), no.~5, 1033--1055. \MR{2089081}

\bibitem[CZ04b]{CZ04}
\bysame, \emph{On integral points on surfaces}, Ann. of Math. (2) \textbf{160}
  (2004), no.~2, 705--726. \MR{2123936}

\bibitem[CZ05]{CZ05}
\bysame, \emph{A lower bound for the height of a rational function at
  {$S$}-unit points}, Monatsh. Math. \textbf{144} (2005), no.~3, 203--224.
  \MR{2130274}

\bibitem[CZ06]{CZ06}
\bysame, \emph{On the integral points on certain surfaces}, Int. Math. Res.
  Not. (2006), Art. ID 98623, 20. \MR{2219222}

\bibitem[CZ10]{CZ10}
\bysame, \emph{Integral points, divisibility between values of polynomials and
  entire curves on surfaces}, Adv. Math. \textbf{225} (2010), no.~2,
  1095--1118. \MR{2671189}

\bibitem[EF08]{EF08}
Jan-Hendrik Evertse and Roberto~G. Ferretti, \emph{A generalization of the
  {S}ubspace {T}heorem with polynomials of higher degree}, Diophantine
  approximation, Dev. Math., vol.~16, SpringerWienNewYork, Vienna, 2008,
  pp.~175--198. \MR{2487693}

\bibitem[Eis95]{Eis95}
David Eisenbud, \emph{Commutative algebra}, Graduate Texts in Mathematics, vol.
  150, Springer-Verlag, New York, 1995, With a view toward algebraic geometry.
  \MR{1322960}

\bibitem[Ful89]{Fulton}
William Fulton, \emph{Algebraic curves}, Advanced Book Classics, Addison-Wesley
  Publishing Company, Advanced Book Program, Redwood City, CA, 1989, An
  introduction to algebraic geometry, Notes written with the collaboration of
  Richard Weiss, Reprint of 1969 original. \MR{1042981}

\bibitem[GW20]{UW}
Ulrich G\"{o}rtz and Torsten Wedhorn, \emph{Algebraic geometry {I}.
  {S}chemes---with examples and exercises}, second ed., Springer Studium
  Mathematik---Master, Springer Spektrum, Wiesbaden, [2020] \copyright 2020.
  \MR{4225278}

\bibitem[Har77]{Harts}
Robin Hartshorne, \emph{Algebraic geometry}, Graduate Texts in Mathematics, No.
  52, Springer-Verlag, New York-Heidelberg, 1977. \MR{0463157}

\bibitem[HL21]{HL21}
Gordon Heier and Aaron Levin, \emph{A generalized {S}chmidt subspace theorem
  for closed subschemes}, Amer. J. Math. \textbf{143} (2021), no.~1, 213--226.
  \MR{4201783}

\bibitem[HL22]{HL22}
Keping Huang and Aaron Levin, \emph{Greatest common divisors on the complement
  of numerically parallel divisors}, 2022.

\bibitem[Laz04]{Laz04}
Robert Lazarsfeld, \emph{Positivity in algebraic geometry. {I}}, Ergebnisse der
  Mathematik und ihrer Grenzgebiete. 3. Folge. A Series of Modern Surveys in
  Mathematics [Results in Mathematics and Related Areas. 3rd Series. A Series
  of Modern Surveys in Mathematics], vol.~48, Springer-Verlag, Berlin, 2004,
  Classical setting: line bundles and linear series. \MR{2095471}

\bibitem[Lev06]{Lev06}
Aaron Levin, \emph{One-parameter families of unit equations}, Math. Res. Lett.
  \textbf{13} (2006), no.~5-6, 935--945. \MR{2280786}

\bibitem[Lev09]{Lev09}
\bysame, \emph{Generalizations of {S}iegel's and {P}icard's theorems}, Ann. of
  Math. (2) \textbf{170} (2009), no.~2, 609--655. \MR{2552103}

\bibitem[Lev14]{LevWirsing}
\bysame, \emph{Wirsing-type inequalities}, Bull. Inst. Math. Acad. Sin. (N.S.)
  \textbf{9} (2014), no.~4, 685--710. \MR{3309948}

\bibitem[Lev19]{Lev19}
\bysame, \emph{Greatest common divisors and {V}ojta's conjecture for blowups of
  algebraic tori}, Invent. Math. \textbf{215} (2019), no.~2, 493--533.
  \MR{3910069}

\bibitem[Mat80]{Matsumura}
Hideyuki Matsumura, \emph{Commutative algebra}, second ed., Mathematics Lecture
  Note Series, vol.~56, Benjamin/Cummings Publishing Co., Inc., Reading, Mass.,
  1980. \MR{575344}

\bibitem[MR15]{MR15}
David McKinnon and Mike Roth, \emph{Seshadri constants, diophantine
  approximation, and {R}oth's theorem for arbitrary varieties}, Invent. Math.
  \textbf{200} (2015), no.~2, 513--583. \MR{3338009}

\bibitem[RV20]{RV20}
Min Ru and Paul Vojta, \emph{A birational {N}evanlinna constant and its
  consequences}, Amer. J. Math. \textbf{142} (2020), no.~3, 957--991.
  \MR{4101336}

\bibitem[RW22]{RW22}
Min Ru and Julie Tzu-Yueh Wang, \emph{The {R}u-{V}ojta result for
  subvarieties}, Int. J. Number Theory \textbf{18} (2022), no.~1, 61--74.
  \MR{4369792}

\bibitem[Sch77]{Schl77}
H.~P. Schlickewei, \emph{The {${\mathfrak p}$}-adic
  {T}hue-{S}iegel-{R}oth-{S}chmidt theorem}, Arch. Math. (Basel) \textbf{29}
  (1977), no.~3, 267--270.

\bibitem[Sil87]{Sil87}
Joseph~H. Silverman, \emph{Arithmetic distance functions and height functions
  in {D}iophantine geometry}, Math. Ann. \textbf{279} (1987), no.~2, 193--216.
  \MR{919501}

\bibitem[Sil05]{Sil05}
\bysame, \emph{Generalized greatest common divisors, divisibility sequences,
  and {V}ojta's conjecture for blowups}, Monatsh. Math. \textbf{145} (2005),
  no.~4, 333--350. \MR{2162351}

\bibitem[Voj87]{Voj87}
Paul Vojta, \emph{Diophantine approximations and value distribution theory},
  Lecture Notes in Mathematics, vol. 1239, Springer-Verlag, Berlin, 1987.

\bibitem[Voj97]{Voj97}
\bysame, \emph{On {C}artan's theorem and {C}artan's conjecture}, Amer. J. Math.
  \textbf{119} (1997), no.~1, 1--17.

\bibitem[Voj20]{Voj20}
\bysame, \emph{Birational nevanlinna constants, beta constants, and diophantine
  approximation to closed subschemes}, arXiv preprint arXiv:2008.00405 (2020).

\bibitem[Voj23]{Voj23}
\bysame, \emph{Birational {N}evanlinna constants, beta constants, and
  diophantine approximation to closed subschemes}, J. Th\'{e}or. Nombres
  Bordeaux \textbf{35} (2023), no.~1, 17--61. \MR{4596522}

\bibitem[WY21]{WY19}
Julie Tzu-Yueh Wang and Yu~Yasufuku, \emph{Greatest common divisors of integral
  points of numerically equivalent divisors}, Algebra Number Theory \textbf{15}
  (2021), no.~1, 287--305. \MR{4226990}

\end{thebibliography}
\Addresses
\end{document}